	\numberwithin{equation}{section}
	\DeclareMathAlphabet{\mathcal}{OMS}{zplm}{m}{n} 
	\def\Snospace~{\S{}}
\title{Finite Class $2$ Nilpotent and Heisenberg Groups}
\author{Dávid R. Szabó\footnote{The project
		leading to this application has received funding from the European
		Research Council (ERC) under the European Union’s Horizon 2020
		research and innovation programme (grant agreement No 741420). 
		The author was supported by the
		National Research, Development and Innovation Office (NKFIH) Grant
		K138596.}}
\date{\today}
\let\orgdescriptionlabel\descriptionlabel
\renewcommand*{\descriptionlabel}[1]{%
	\let\orglabel\label
	\let\label\@gobble
	\phantomsection
	\edef\@currentlabel{#1}%
	\let\label\orglabel
	\orgdescriptionlabel{#1}%
}
\definecolor{Cambridgeblue}{RGB}{133, 176, 154}
\definecolor{CambridgeblueDark}{RGB}{72, 112, 92}
\definecolor{Oxfordblue}{RGB}{0, 33, 71}
\definecolor{CEUblue}{RGB}{0,152,195}
\definecolor{CEUblueDark}{RGB}{0,152,195}
\definecolor{ELTEblue}{RGB}{1, 40, 81}
\definecolor{ELTEblueLight}{RGB}{1, 60, 123}
\declaretheoremstyle[
	spaceabove=0, 
	spacebelow=0,
	headfont=\normalfont\bfseries,
	notefont=\mdseries, 
	notebraces={(}{)},
	bodyfont=\itshape,
	postheadspace=5pt plus 1pt minus 1pt,
	mdframed={
		backgroundcolor=black!10, 
		linecolor=black!20, 
		linewidth=1pt,
		innertopmargin=4pt,
		roundcorner=5pt, 
		innerbottommargin=4pt, 
	} 
]{plainFilled}
\declaretheoremstyle[
spaceabove=0, 
spacebelow=0,
headfont=\normalfont\bfseries,
notefont=\mdseries, 
notebraces={(}{)},
bodyfont=\itshape,
postheadspace=5pt plus 1pt minus 1pt,
mdframed={
	linecolor=black!20, 
	linewidth=2pt,
	innertopmargin=6pt,
	roundcorner=5pt, 
	innerbottommargin=6pt, 
} 
]{plainBordered}
\tikzset{
	symbol/.style={
		draw=none,
		every to/.append style={
			edge node={node [sloped, allow upside down, auto=false]{$#1$}}}
	}
}
\tikzset{
	commutative diagrams/.cd,
	mono/.style={hook}, 
	epi/.style={two heads}, 
	iso/.style={mono, epi, "\sim"{sloped}}, 
	iso'/.style={mono, epi, "\sim"{',sloped}},
	mapsto/.style={|->},
	identity/.style={equal},
	inclusion/.style={mono, "\subseteq"{sloped}},
	linclusion/.style={mono, "\supseteq"'{sloped}},
	central/.style={"\scriptscriptstyle \bullet"{description, inner sep=-1pt, pos=0.5, sloped}},
	double central/.style={mono,"\diamond"{description,inner sep=0pt, pos=0.7, sloped}},
	cprod/.style={mono,"\cprod"{description,inner sep=-3pt, pos=0.5, sloped}},
	dashed/.style={densely dashed},
	dotted/.style={dash dot},
	%
	inline/.style={cramped, sep=small},
	cube/.style={sep={3.5em,between origins}},
	row sep/none/.initial=0em,
	align/.style={row sep=none},
	:/.style={r,draw=none, ":"{font=,anchor=center}},
	::/.style={rr,draw=none, ":"{font=,anchor=center}},
	dots/.style={r,draw=none, "\dots"{font=,anchor=center}},
	oplus/.style={draw=none, "\oplus"{font=,anchor=center}},
	times/.style={draw=none, "\times"{font=,anchor=center}},
	label/.style={/tikz/column 1/.append style={
			column sep= 0.75em}},
	smallColSep/.style={/tikz/column #1/.append style={
				column sep = 0.25em}},
	left label/.style={label,/tikz/column 1/.append style={
			anchor=base east,
			"xy"}}
}
\newcommand{\email}[1]{\href{mailto:#1}{\nolinkurl{#1}}}
\declaretheorem[style=plain,name=Theorem,numberwithin=section]{thm}
\declaretheorem[style=plainFilled,name=Theorem,sibling=thm]{thm!}
\declaretheorem[style=plainFilled,name=Theorem]{thmMain}
\declaretheorem[style=plain,name=Proposition,sibling=thm]{prop}
\declaretheorem[style=plainBordered,name=Proposition,sibling=thm]{prop!}
\declaretheorem[style=plain,name=Lemma,sibling=thm]{lem}
\declaretheorem[style=plainBordered,name=Lemma,sibling=thm]{lem!}
\declaretheorem[style=plain,name=Corollary,sibling=thm]{cor}
\declaretheorem[style=plainBordered,name=Corollary,sibling=thm]{cor!}
\declaretheorem[style=plainBordered,name=Problem,sibling=thm]{prob!}
\declaretheorem[style=definition,name=Definition,sibling=thm]{defn}
\declaretheorem[style=definition,name=Example,sibling=thm]{exmp}
\declaretheorem[style=remark,name=Remark,sibling=thm]{rem}
\declaretheorem[style=remark,name=Note,sibling=thm]{note}
\newcommand{\Z}{\mathbb{Z}}
\newcommand{\F}{\mathbb{F}}
\renewcommand{\phi}{\varphi}
\renewcommand{\rho}{\varrho}
\newcommand{\CC}{{\mathcal{C}}}
\DeclareMathOperator{\Syl}{Syl}
\newcommand{\cbox}[2]{\pbox{#1}{\small\relax\ifvmode\centering\fi #2}}
\DeclareMathOperator{\SL}{SL}
\DeclareMathOperator{\Aut}{Aut}
\DeclareMathOperator{\Hom}{Hom}
\DeclareMathOperator{\ann}{ann}
\DeclareMathOperator{\im}{Im}
\DeclareMathOperator{\rank}{rk}
\newcommand{\id}{\mathrm{id}}
\DeclareMathOperator{\diag}{diag}
\newcommand{\isom}{\cong}
\newcommand{\cprod}{\mathbin{\circ}} 
\newcommand{\fprod}{\mathbin{\star}}
\NewDocumentCommand{\centralProduct}{m o m}{{#1}\mathbin{\cprod\IfNoValueTF{#2}{}{\!_{#2}}} {#3}}
\NewDocumentCommand{\centralProj}{o}{\mathrm{p}\IfNoValueTF{#1}{}{_{#1}}}
\NewDocumentCommand{\centralUniv}{o}{\cprod\IfNoValueTF{#1}{}{_{#1}}}
\NewDocumentCommand{\centralProductMax}{m o m}{{#1}\mathbin{\hat\cprod\IfNoValueTF{#2}{}{\!_{#2}}} {#3}}
\NewDocumentCommand{\HeisenbergMono}{o}{\iota\IfNoValueTF{#1}{}{_{#1}}}
\NewDocumentCommand{\HeisenbergEpi}{o}{\pi\IfNoValueTF{#1}{}{_{#1}}}
\NewDocumentCommand{\cbaExtInline}{o m o m o m}{
	\begin{tikzcd}[inline,ampersand replacement=\&]
		\IfNoValueF{#1}{#1:}
		1\ar[r] \& 
		#2\ar[r,central,mono,"\IfNoValueF{#3}{#3}"] \& 
		#4\ar[r,epi,"\IfNoValueF{#5}{#5}"] \& 
		#6\ar[r] \& 
		1
	\end{tikzcd}
}
\DeclareMathOperator{\HH}{H}
\newcommand{\embeds}{\hookrightarrow}
\newcommand{\xembeds}[1]{\xhookrightarrow{#1}}
\newcommand{\surjects}{\twoheadrightarrow}
\newcommand{\xsurjects}[2][]{%
	\xrightarrow[#1]{#2}\mathrel{\mkern-14mu}\rightarrow
}
\newcommand{\normal}{\lhd}
\DeclareMathOperator{\Center}{Z}
\newcommand{\generate}[1]{{\langle #1\rangle}}
\DeclareMathOperator{\lcm}{lcm}
\newcommand{\centralByAbelian}{central-by-abelian}
\newcommand{\alternatingFunctor}{\mathop{\mathcal{A}}}
\newcommand{\HeisenbergFunctor}{\mathop{\mathcal{H}}}
\newcommand{\maximalCBAProj}{\pi_{\mathcal{Z}}}
\DeclareMathOperator{\maxCBAfunctor}{\mathcal{Z}}
\newenvironment{summary}{\begin{quote}\small}{\end{quote}}
\newenvironment{smallpmatrix}{\left(\begin{smallmatrix}}{\end{smallmatrix}\right)}
\newcommand{\matrixH}[3]{\begin{smallpmatrix}1&#1&#3\\0&1&#2\\0&0&1\end{smallpmatrix}}
\newcommand{\matrixGroupH}[4]{\matrixH{#1}{#2}{#3}_{\!\!#4}}
\newcommand{\divides}{\bigm|}
\newcommand{\from}{\colon}
\newcommand{\leteq }{\coloneqq}
\newcommand{\arrowInline}[1]{\begin{tikzcd}[inline,ampersand replacement = \&]%
	\!\ar[r,{#1}] \& \!%
\end{tikzcd}}
\newcommand{\extraSPecialD}[1]{{\mathfrak{D}_{#1}}}
\newcommand{\extraSPecialQ}[1]{{\mathfrak{Q}_{#1}}}
\newcommand{\coset}[1]{\underline{\smash{#1}}}
\newcommand{\cosetRepresentative}[1]{\hat{#1}}
\newcommand{\extended}[1]{{#1}^\star}
\begin{document}
\maketitle

\begin{abstract}
	We present a structural description of finite nilpotent groups of class at most~ $2$ using a specified number of subdirect and central products of such groups that are $2$-generated. 
	As a corollary, we show that any such group is isomorphic to a subgroup of  a Heisenberg group satisfying certain properties. 
	
	The motivation for these results is of topological nature as they can be used   
	to give lower bounds to the nilpotently Jordan property of the birational automorphism group of varieties and the homeomorphism group of compact manifolds. 
\end{abstract}

\section{Introduction}

A finite non-abelian $p$-group $G$ is \emph{special} if its Frattini subgroup $\Phi(G)$, derived subgroup $G'$ and centre $\Center(G)$ all coincide and are isomorphic to $(\Z/p\Z)^r$ for some $r$. 
A special group $G$ is \emph{extra-special} if $r=1$. 
The structure of these groups is described by the following classical result.
\begin{thm}[{\cite[(4.16)/(ii), Theorem~4.18]{Suzuki2}}]\label{thm:specialPgroups}
		Every special $p$-group is a subdirect product of groups of the form: the central product of an extra-special $p$-group and an abelian group.
		Every extra-special $p$-group of order $p^{2n+1}$ is the central product of $n$ extra-special $p$-subgroups of order $p^3$. 
		For every prime $p$, there are exactly two extra-special groups of order $p^3$ (up to isomorphism).
\end{thm}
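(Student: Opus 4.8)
I would prove the three assertions in the order third, second, first, treating the order-$p^3$ groups as the basic building blocks and reducing the later parts to them.

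\textbf{The order-$p^3$ case.} Here $|H|=p^3$ with $\Center(H)=H'=\Phi(H)\cong\F_p$ and $V\coloneqq H/\Center(H)\cong\F_p^2$. The commutator $(g,h)\mapsto[g,h]$ is alternating and biadditive modulo the centre, so it descends to an alternating form $b\colon V\times V\to H'\cong\F_p$, which is non-degenerate because its radical is $\Center(H)/\Center(H)=0$. Up to the choice of a generator of $H'$ there is a unique such form on a $2$-dimensional space, so only the $p$-th power map is left to analyse. For $p$ odd the identity $(xy)^p=x^py^p[y,x]^{\binom p2}$ together with $p\mid\binom p2$ shows $g\mapsto g^p$ is a homomorphism $H\to\Center(H)$; it is either trivial, giving the exponent-$p$ group $\langle x,y,z\mid x^p=y^p=z^p=1,[x,y]=z\rangle$, or nonzero, giving $\langle x,y\mid x^{p^2}=y^p=1,\;x^y=x^{1+p}\rangle$, and these two are non-isomorphic and exhaust the cases. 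For $p=2$ squaring is quadratic rather than additive, and the two groups are $D_8$ and $Q_8$, separated by their numbers of involutions.

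\textbf{Extra-special groups.} For $H$ extra-special of order $p^{2n+1}$, the space $V=H/\Center(H)\cong\F_p^{2n}$ again carries the non-degenerate alternating form $b$ valued in $H'\cong\F_p$. I would choose a symplectic (Darboux) basis $\bar e_1,\bar f_1,\dots,\bar e_n,\bar f_n$ with $b(\bar e_i,\bar f_i)=1$ and all other pairings zero, and lift it to $e_i,f_i\in H$. Since $[e_i,f_i]$ generates $H'=\Center(H)$, each $H_i\coloneqq\langle e_i,f_i\rangle$ contains $\Center(H)$ and has $H_i/\Center(H)$ of dimension $2$, so $|H_i|=p^3$ and $H_i$ is extra-special. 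Orthogonality $b(\bar e_i,\bar e_j)=b(\bar e_i,\bar f_j)=0$ for $i\ne j$ forces $[H_i,H_j]=1$, the $H_i$ share the common centre $\Center(H)$, and their images fill $V$; hence multiplication realises $H$ as the central product $H_1\cprod\dots\cprod H_n$ amalgamated along $\Center(H)$, the order count $p^{2n+1}$ ruling out collapse.

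\textbf{Special groups.} Let $G$ be special with $\Center(G)=G'=\Phi(G)\cong\F_p^r$. For $\lambda\in\Hom(\Center(G),\F_p)$ set $N_\lambda\coloneqq\ker\lambda$, a normal subgroup of $G$ as $N_\lambda\le\Center(G)$. Picking a basis $\lambda_1,\dots,\lambda_r$ of the dual gives $\bigcap_i N_{\lambda_i}=1$, so the diagonal map $G\hookrightarrow\prod_i G/N_{\lambda_i}$ is an injection with surjective components, i.e.\ a subdirect product. Each $\bar G\coloneqq G/N_{\lambda_i}$ has $\Phi(\bar G)=\bar G'=\Center(G)/N_{\lambda_i}\cong\F_p$ (using $N_{\lambda_i}\le\Phi(G)$), so it remains to decompose an arbitrary $p$-group $\bar G$ whose commutator subgroup is central of order $p$. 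For such $\bar G$ the form on $\bar G/\Center(\bar G)$ is non-degenerate; lifting a symplectic basis as before produces a subgroup $E$ with $E'=\bar G'$, and since $\Phi(\bar G)=\bar G'$ forces every $p$-th power into $\bar G'$, the lifts generate an extra-special group with $\Center(E)=E'=\bar G'$. Setting $A\coloneqq\Center(\bar G)$ gives $\bar G=EA$ with $E\cap A=\bar G'$, i.e.\ $\bar G=E\cprod A$. Thus $G$ is a subdirect product of central products of an extra-special and an abelian group, as claimed.

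\textbf{Main obstacle.} The symplectic linear algebra is routine; the real work is the interaction between the alternating form and the $p$-th power (Frattini) map when lifting a symplectic basis, i.e.\ guaranteeing that the chosen lifts generate subgroups of order exactly $p^3$ that commute modulo the centre. This is also where $p=2$ must be isolated: there squaring is not additive, the factors are $D_8$ or $Q_8$, and the decomposition is genuinely non-canonical, as witnessed by $D_8\cprod D_8\cong Q_8\cprod Q_8$.
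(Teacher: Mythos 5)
Your proof is correct. Note that the paper does not prove this statement at all --- it is quoted from Suzuki as classical background --- so the only meaningful comparison is with the machinery the paper builds for its generalisation (\autoref{thm:mainStructure}), and your argument is precisely the specialisation of that machinery to special groups: your reduction via kernels of a dual basis of $\Hom(\Center(G),\F_p)$ is the $\F_p^r$ case of \autoref{prop:subdirectProductFinite} (and of \autoref{lem:triviallyIntersectionInFiniteAbelian}), and your symplectic-basis lifting is the $\F_p$ case of \autoref{lem:DarbouxGenerators} and \autoref{cor:decompositionCyclicDeriverGroup}, where the Darboux argument is carried out over an arbitrary principal ideal domain with cyclic target rather than over $\F_p$. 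The steps you flag as the ``real work'' are handled correctly: the identity $(xy)^p=x^py^p[y,x]^{\binom{p}{2}}$ and the case split at $p=2$ give the order-$p^3$ classification, and in the lifting step the key point --- that $\Center(E)=E\cap\Center(\bar G)=E^pE'=\Phi(\bar G)=\bar G'$, so the lifted subgroups really are extra-special of order $p^3$ --- goes through because $\Phi(\bar G)=\bar G'$ forces all $p$-th powers into the centre. Your closing remark on non-uniqueness of the factors is the content of \autoref{rem:centralProductDecompositionNotUnique}.
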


We present a generalisation to finite nilpotent groups of class at most~$2$. 
\begin{thmMain}\label{thm:mainStructure}\
	\begin{enumerate}
		\item\label{part:mainStructureSubdirect}
		Every finite nilpotent group $G$ is a subdirect product of $d(Z(G))$ groups each with cyclic centre. 
		No such subdirect product may exists using fewer factors.
		
		\item\label{part:mainStructureCentral}
		Every finite nilpotent group $G$ of class at most~$2$ with cyclic commutator subgroup is the internal central product of $t$ nilpotent $2$-generated subgroups of class $2$ and an abelian subgroup $A$ satisfying $d(G)=2t+d(A)$ and some further properties discussed in \autoref{cor:decompositionCyclicDeriverGroup}. 
	\end{enumerate}
\end{thmMain}
The $p$-groups of class $2$ with $2$ generators are classified in \cite[Theorem~1.1]{2generatedClassification}. 
The central product decomposition of \autoref{thm:mainStructure} is a generalisation of \cite[Theorem~2.1]{brady_bryce_cossey_1969} where groups with cyclic centre were considered. 
The argument presented in the current paper is more structural and gives some invariants needed for a topological application of \autoref{thm:mainAction} below.

For every $\Z$-bilinear map $\mu\from A\times B\to C$ between $\Z$-modules, we associate a (\emph{Heisenberg}) group 
\begin{equation}\label{eq:matrixGroupH}
	\matrixGroupH{A}{B}{C}{\mu}\leteq \left\{\matrixH{a}{b}{c}:a\in A,b\in B,c\in C\right\}
\end{equation}
where the group operation is formal matrix multiplication via $\mu$, see \autoref{def:H} and \autoref{rem:Hdef}.
As an application of (the proof of) \autoref{thm:mainStructure}, we obtain the other main statement of the paper.
\begin{thmMain}\label{thm:mainHeisenberg}	
	Every finite nilpotent group $G$ of class at most~$2$ is isomorphic to a subgroup of a non-degenerate Heisenberg group of the form \eqref{eq:matrixGroupH}
	for a suitable $\mu\from A\times B\to C$ depending on $G$. 
	Here the number of generators and exponents of $A,B,C$ are bounded by concrete functions of $G$ as in \eqref{eq:Droperties} and \eqref{eq:ExpProperties}.
\end{thmMain}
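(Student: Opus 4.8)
The plan is to combine the structural decomposition of \autoref{thm:mainStructure} with the observation that Heisenberg groups of the form $\matrixGroupH{A}{B}{C}_{\mu}$ are closed under the operations appearing there. First I would record the basic feature of a $\leq 2$-step nilpotent $G$: since $G'\subseteq\Center(G)$, the commutator map descends to a $\Z$-bilinear alternating pairing $\omega\from G/\Center(G)\times G/\Center(G)\to G'$, and this pairing is non-degenerate, since a coset $g\Center(G)$ lies in the radical of $\omega$ exactly when $g$ centralises $G$, i.e.\ when $g\in\Center(G)$. This non-degeneracy is what will ultimately force the target Heisenberg group to be non-degenerate, so the whole difficulty is to realise $\omega$, together with the extension data of $G$, as the canonical pairing inside a group of the form \eqref{eq:matrixGroupH}.

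Second, I would reduce to the case of cyclic commutator subgroup. By \autoref{prop:subdirectProductFinite} (the first part of \autoref{thm:mainStructure}), $G$ embeds as a subdirect product into $\prod_i G_i$ with each $G_i$ of cyclic centre, hence of cyclic commutator subgroup, and each $G_i$ is again $\leq 2$-step nilpotent as a quotient of $G$. A direct product of Heisenberg groups $\matrixGroupH{A_i}{B_i}{C_i}_{\mu_i}$ is itself a Heisenberg group $\matrixGroupH{\bigoplus A_i}{\bigoplus B_i}{\bigoplus C_i}_{\mu}$ for the block-diagonal $\mu$, and a block-diagonal sum of non-degenerate pairings is non-degenerate; so it suffices to embed each $G_i$ into a non-degenerate Heisenberg group and take the product.

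Third, for a factor with cyclic commutator subgroup I would invoke \autoref{cor:decompositionCyclicDeriverGroup} to write it as an internal central product of $t$ two-generated class-$2$ subgroups $H_1,\dots,H_t$ and an abelian subgroup $A$. Each $H_j=\generate{x_j,y_j}$ embeds into the small Heisenberg group $\matrixGroupH{\generate{x_j}}{\generate{y_j}}{H_j'}_{\mu_j}$ with $\mu_j$ the restricted commutator pairing, realising one Darboux (symplectic) block. The central product amalgamates the cyclic factors $H_j'$ into the common $C=G'$, assembling the $\mu_j$ into a single $\mu$ on $\big(\bigoplus_j\generate{x_j}\big)\times\big(\bigoplus_j\generate{y_j}\big)\to C$; this is exactly the polarisation of $\omega$ into two isotropic halves. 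The abelian factor $A$, together with the part of $\Center(G)$ lying outside $G'$, contributes degenerate directions, which I would absorb by the standard doubling trick, adjoining a dual copy and embedding $A\hookrightarrow\matrixGroupH{A}{\Hom(A,C)}{C}_{\mathrm{ev}}$ via evaluation, which is non-degenerate, and adding these blocks to $\mu$.

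The numerical bounds then follow by bookkeeping: the relation $d(G)=2t+d(A)$ bounds the number of Darboux blocks, so the ranks of the two polarised halves are controlled by $d(G)$ and the doubling, $d(C)$ is controlled by $d(\Center(G))$ across the subdirect factors, and the exponents of $A,B,C$ are divisors of $\exp(G)$, with $C$ carrying $\exp(G')$. The main obstacle I anticipate is not the algebra of the pairing but the compatibility of the extension $2$-cocycle and the power maps with the rigid Heisenberg normal form: the sections chosen in \autoref{thm:mainStructure} must turn the cocycle of $G$ into the standard Heisenberg cocycle $\mu(a,b')$, and this is delicate at the prime $2$, where the squaring maps interact with the alternating form. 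Keeping the doubling construction from inflating the invariants beyond the advertised functions of $G$ while simultaneously securing non-degeneracy is the crux, and is precisely what the exact formulation in \autoref{thm:embedToH} must pin down.
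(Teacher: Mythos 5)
Your overall architecture (reduce to cyclic centre via \autoref{prop:subdirectProductFinite}, embed each factor into a Heisenberg group, take the block-diagonal product) matches the paper, and your observations about non-degeneracy of the induced pairing and about products of Heisenberg groups are correct. But the third step contains a genuine gap: the claim that each $2$-generated factor $E_j=\generate{\bar x_j,\bar y_j}$ embeds into the ``small'' Heisenberg group $\matrixGroupH{\generate{x_j}}{\generate{y_j}}{E_j'}_{\mu_j}$ built from the restricted commutator pairing is false. Take $G=Q_8$: here $\generate{x}\isom\generate{y}\isom G'\isom\Z/2\Z$, and the resulting Heisenberg group of order $8$ has exponent-$2$ images of $(1,0,0)$ and $(0,1,0)$, so it is dihedral and does not contain $Q_8$. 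The same failure occurs at every odd prime for the extra-special group of order $p^3$ and exponent $p^2$ (the group $\matrixGroupH{\Z/p\Z}{\Z/p\Z}{\Z/p\Z}$ has exponent $p$ for $p$ odd). The obstruction is not the prime $2$ and not the symmetrisation $2^{-1}\omega$; it is that a lift $\bar x_j\in G$ of $x_j\in G/\Center(G)$ can have strictly larger order than $x_j$, i.e.\ the extension does not split over the isotropic halves, so the cocycle of $G$ is not the Heisenberg cocycle with respect to any section. The paper flags exactly these counterexamples in \autoref{exmp:mu_G}.

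You correctly identify this as ``the crux'' in your final paragraph, but you do not resolve it, and resolving it is the actual content of the proof. The paper's fix is to enlarge the cyclic centre: \autoref{lem:Zextension} produces a cyclic overgroup $\hat C\supseteq\Center(G)$ with $|\hat C|\divides\exp(G)$ together with homomorphisms $\zeta_j$ from the two abelian preimages $\maximalCBAProj^{-1}(L_j)$ of the isotropic halves into $\hat C$ extending the inclusion of the centre; the Key Proposition (\autoref{lem:key}) then turns this ``extended polarisation'' into a monomorphism $G\embeds\HH(\hat\mu)$ with $\hat\mu=\zeta\circ\mu$, which is why Theorem~\ref{thm:embedToH} only controls $\exp(C)$ by $\exp(G)$ rather than by $|\Center(G)|$. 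Note also that the paper does not route this argument through the central product decomposition of \autoref{cor:decompositionCyclicDeriverGroup} at all: it works directly with the isotropic $Q$-structure $M=M_Q\oplus iM_Q$ from \autoref{lem:complexStructure}, and the remark after \autoref{lem:Zextension} explicitly says this replaces the block-by-block central product approach. So to complete your argument you would need, for each block, an embedding of $E_j$ into $\matrixGroupH{\generate{x_j}}{\generate{y_j}}{\hat C_j}_{\mu_j}$ for a suitably enlarged cyclic $\hat C_j$, plus a compatible amalgamation of these enlarged centres across blocks --- which is essentially the machinery you are missing.
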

This statement is a direct consequence of the more technical \autoref{thm:embedToH}.
See \cite[Corollary~2.21]{Magidin} for a weaker conclusion in a much more general setup.
We remark here that both \autoref{thm:mainStructure} and \autoref{thm:mainHeisenberg} are true in the finitely generated setup, see the thesis of the author \cite[Theorems A,B]{phd}. 
Note that the number of isomorphism classes of groups of order $p^n$ is $p^{\frac{2}{27}n^3+\mathcal{O}(n^{8/3})}$ as $n\to\infty$ \cite[Theorem,~p.~153]{Sims} of which at least $p^{\frac{2}{27}n^3- \frac{12}{27}n^2}$ are nilpotent of class at most~$2$ \cite[Theorem~2.3]{Higman}.

\paragraph*{Topological applications}
Bounding the invariants in \autoref{thm:mainStructure} and \autoref{thm:mainHeisenberg} is essential for \autoref{thm:mainAction}. 
While  \autoref{thm:mainAction} shall be proved in a follow-up topological paper, we state it here and briefly explain its relevance to motivate \autoref{thm:mainStructure} and \autoref{thm:mainHeisenberg}, the main statements of the current paper.

Recall that a group $G$ is \emph{of rank at most $r$} if every subgroup $H$ of $G$ can be generated by at most $r$ elements.
\begin{cor}\label{thm:mainAction}
	For every natural number $r$, there exists 
	an algebraic variety $X_r$ and 
	a compact manifold $M_r$,
	such that every finite nilpotent group of class at most~$2$ and of rank at most~$r$  
	acts faithfully
	on $X_d$ via birational automorphisms and   
	on $M_d$ via diffeomorphisms.
\end{cor}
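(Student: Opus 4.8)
The plan is to reduce, by the structural results already established, to the construction of a single ambient space of \emph{bounded dimension} carrying a universal Heisenberg symmetry, and then to realise that symmetry geometrically by exploiting the torsion of a positive-dimensional abelian group together with a compatible alternating pairing. First I would invoke \autoref{thm:mainHeisenberg} (whose proof rests on the central-product decomposition of \autoref{thm:mainStructure}): every finite \nilpotent{2} group $G$ of rank at most $r$ embeds into a non-degenerate Heisenberg group of the form \eqref{eq:matrixGroupH} in which $\rank A,\rank B,\rank C$ are bounded by some $N=N(r)$, while the exponents of $A,B,C$ are allowed to grow with $|G|$. It therefore suffices to produce one $X_r$ (resp.\ one $M_r$) on which \emph{all} of these bounded-rank Heisenberg groups act faithfully.

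The second reduction is to a standard model. Since $\mu$ is non-degenerate and $A,B,C$ have bounded rank, I would embed $C$ into the torsion of a product of copies of $\mathbb{G}_m$ (resp.\ of the circle $\R/\Z$) and, after enlarging $A,B$ to rank at most $2N$, replace $\mu$ by the standard non-degenerate alternating pairing coming from a Darboux normal form for non-degenerate alternating $\Z$-modules, exactly the normal form underlying the $\Z$-module framework of the paper. In this way one is reduced to realising a single \emph{symplectic} Heisenberg group of rank $2N$, with arbitrarily large but bounded-rank torsion, by transformations of one fixed space. It is essential here that one cannot simply use linear representations: the minimal faithful degree of the Heisenberg group over $\Z/p^{k}\Z$ already grows like $p^{k}$, so no fixed-dimensional faithful linear action exists, and the realisation must be genuinely geometric.

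For the variety I would realise this symplectic Heisenberg group through Mumford's theta group of a \emph{fixed} abelian variety $\mathcal{A}_r$, for instance a product of $N$ elliptic curves. Its torsion $\mathcal{A}_r[\infty]\cong(\Q/\Z)^{2N}$ carries the Weil pairing $\mathcal{A}_r[m]\times\mathcal{A}_r[m]\to\mu_m$, compatibly in $m$, and the associated theta groups are central extensions $1\to\mathbb{G}_m\to\mathcal{G}\to\mathcal{A}_r[m]\to 1$ whose commutator is exactly this pairing. Taking $X_r$ to be the total space, suitably compactified as a $\PP^1$-bundle, of a line bundle over $\mathcal{A}_r$, one lets the torsion translations act covering $\mathcal{A}_r$, the central $\mathbb{G}_m$ act by fibre scaling, and a mixing cocycle implement the Weil pairing. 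Every finite symplectic Heisenberg group of rank at most $2N$ then appears inside $\Bir(X_r)$ as a group of biregular automorphisms of the fixed space $X_r$ of dimension $N+1$, and by the two reductions above every finite \nilpotent{2} group of rank at most $r$ acts faithfully.

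For the manifold I would run the exact analogue, with $\mathcal{A}_r$ replaced by the torus $\torus{2N}=(\R/\Z)^{2N}$ and $\mathbb{G}_m$ by the circle $\R/\Z$: the standard symplectic form on $\torus{2N}$ is the Euler class of a circle bundle, giving a compact $2$-step nilmanifold $M_r=N/\Gamma$, and the finite symplectic Heisenberg groups are exhibited inside $\Diff(M_r)$ via the affine diffeomorphism group $\mathrm{Aff}(M_r)=N_{\mathrm{Aff}(N)}(\Gamma)/\Gamma$, as torsion translations on the base combined with fibre rotations whose commutators reproduce the pairing. Since the torsion of the torus and of the circle is all of $\Q/\Z$, a fixed manifold of dimension $2N+1$ absorbs all exponents at once. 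I expect the main obstacle to be precisely this tension between unbounded order and fixed dimension: the whole argument hinges on transporting the abstract, exponent-unbounded pairing of \autoref{thm:mainHeisenberg} onto the compatible Weil/symplectic pairing living on the torsion of a positive-dimensional group, and then on checking that the resulting transformations close up into the correct class-$2$ group and act faithfully, in particular that the central $\mathbb{G}_m$ (resp.\ circle) acts nontrivially, which is why one must use the total space of a nontrivial bundle rather than its base. Verifying faithfulness and the exact commutator identity for the mixing cocycle, equivalently constructing the intermediate lattice $\Gamma\subset\tilde\Gamma$ in the smooth case, is the technical heart and is carried out in the follow-up paper; the rank bounds of \autoref{thm:mainHeisenberg} are exactly what keep the dimensions $N+1$ and $2N+1$ finite and uniform over all groups of rank at most $r$.
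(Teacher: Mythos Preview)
The paper does not contain a proof of this corollary: immediately before stating it, the text says explicitly that it ``shall be proved in a follow-up paper'', and it is cited as \cite[Theorem~C]{phd}. There is therefore no proof in the present paper to compare your proposal against.

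That said, your outline is entirely in line with the intended strategy as far as one can infer it from the paper. The paper's r\^ole is precisely the one you assign to it: supplying the Heisenberg embedding of \autoref{thm:mainHeisenberg} with uniform bounds on $d(A),d(B),d(C)$ in terms of the rank, so that a single ambient space of fixed dimension can carry all the relevant Heisenberg groups. The macros defined in the source (\verb|\MumfordTheta|, \verb|\MumfordSES|, \verb|\complexTorus|, etc.) and the complex/isotropic structure of \autoref{lem:complexStructure} strongly suggest that the geometric realisation in the follow-up indeed proceeds via Mumford theta groups over a fixed abelian variety and the analogous nilmanifold construction over a torus, exactly as you sketch. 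You also correctly identify the crux: unbounded exponent versus fixed dimension, resolved by using the full torsion of a positive-dimensional abelian group rather than linear representations. Your proposal is thus a faithful reconstruction of the expected argument, with the caveat that the verification of faithfulness and of the commutator identities is, as you yourself note, deferred and not something the present paper supplies.
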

This statement about actions of varieties is sharp up to bounded extensions by \autoref{thm:birational2Nilpotent} and \autoref{rem:boundedRankBir} below, i.e. no larger set of finite groups can act simultaneously than the set of nilpotent groups of class at most~$2$ and of bounded rank. 
In fact, \autoref{thm:mainAction} demonstrates the sharpness of  \autoref{thm:birational2Nilpotent}.

\begin{defn}[{\cite[Definition~1]{guld2020finite2nilpotent}}]
	A group $G$ is nilpotently Jordan (of class at most $c$) if there is an integer $J_{G}$ such that every finite subgroup $F\leq G$ 
	contains a nilpotent subgroup $N\leq F$ (of class at most $c$) such that $|F:N|\leq J_G$. 
\end{defn}

\begin{thm}[{\cite[Theorem 2]{guld2020finite2nilpotent} based on \cite{prokhorov2016jordan}}]\label{thm:birational2Nilpotent}
	The birational automorphism group of any variety over a field of characteristic zero is nilpotently Jordan of class at most~$2$.	
\end{thm}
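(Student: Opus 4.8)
The plan is to follow the equivariant Minimal Model Program strategy of Prokhorov–Shramov, refined so that the bounded-index subgroup obtained is nilpotent of class at most $2$ rather than merely abelian. Fix a variety $X$ over a field $k$ of characteristic zero and a finite subgroup $F \leq \Bir(X)$; the goal is to produce a subgroup $N \leq F$ that is nilpotent of class at most $2$ and whose index is bounded by a constant $J$ depending only on $X$.

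First I would regularise the action: by equivariant resolution of singularities, available in characteristic zero, there is a smooth projective variety $Y$ birational to $X$ on which $F$ acts by biregular automorphisms, so that $\Bir(Y) \cong \Bir(X)$. Running an $F$-equivariant MMP and passing to the maximal rationally connected fibration exhibits $Y$, $F$-equivariantly, as fibred over a non-uniruled base $B$ with rationally connected fibres. This yields a short exact sequence
\begin{equation*}
1 \to F_0 \to F \to \overline{F} \to 1,
\end{equation*}
where $\overline{F}$ is the image acting on $B$ and $F_0$ acts along the rationally connected fibres.

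Next I would bound each factor. On the rationally connected part the Jordan property holds uniformly: by Birkar's theorem on boundedness of Fano varieties together with the work of Prokhorov–Shramov, every finite group acting on a rationally connected variety of fixed dimension has an abelian subgroup of bounded index. For the base, the Albanese map $B \to \mathrm{Alb}(B)$ is a birational invariant, so $\overline{F}$ maps to $\Aut(\mathrm{Alb}(B)) = \mathrm{Alb}(B)(k) \rtimes \Aut_{\mathrm{gp}}(\mathrm{Alb}(B))$; the translation part is abelian and the group-automorphism part embeds into $\GL$ of the tangent space at the origin, hence is Jordan by the classical theorem of Jordan. Pulling back the resulting abelian subgroups of bounded index from the fibre part, the translations and the linear part, and intersecting, produces a subgroup $N \leq F$ whose index is bounded by the product of the three Jordan constants.

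The main obstacle, and the reason class $2$ appears in place of class $1$, lies in the extension structure of $N$. Even though each layer is abelian, $N$ itself need not be: lifting a translation of $\mathrm{Alb}(B)$ so that it acts compatibly on the rationally connected fibres produces a theta-group, that is, a Heisenberg extension. The essential observation is that the arising commutators act as scalars, namely roots of unity, on the fibres, and therefore lie in a finite central subgroup; thus $[N,N] \leq \Center(N)$, which is exactly nilpotency of class at most $2$, while $|F:N|$ stays bounded by a constant depending only on $X$. Securing uniform bounds at every stage rests on Birkar's boundedness result, and it is precisely this Heisenberg structure on the abelian factor that the group-theoretic decomposition of the present paper is built to analyse.
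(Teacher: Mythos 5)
The first thing to say is that the paper does not prove this statement at all: it is imported verbatim from \cite{guld2020finite2nilpotent} (building on \cite{prokhorov2016jordan}) and serves only as motivation for \autoref{thm:mainAction}, so there is no in-paper proof to compare yours against. Judged on its own terms, your sketch correctly identifies the architecture of the actual argument in the literature --- regularisation, equivariant MMP, the maximal rationally connected fibration $Y\to B$, the Jordan property for rationally connected varieties via Birkar's boundedness, and the Albanese of the non-uniruled base --- but it is an outline of that argument rather than a proof, and the two places where it waves its hands are precisely where the content of the cited theorems lives.

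Concretely: (a) you send $\overline{F}$ to $\Aut(\mathrm{Alb}(B))$ but never control the kernel of that map; finite groups acting birationally on a non-uniruled variety and trivially on its Albanese need not be trivial, and showing they have bounded order is itself a deep boundedness result of Prokhorov--Shramov, not a formality. (b) More seriously, your construction of $N$ by intersecting bounded-index abelian subgroups of the fibre part, the translation part and the linear part only yields, a priori, a bounded-index subgroup that is an iterated extension of abelian groups --- i.e.\ soluble of derived length $\leq 3$ --- and the entire point of \cite{guld2020finite2nilpotent} is the upgrade from this to nilpotency of class at most $2$. Your justification, that the commutators of lifts ``act as scalars, namely roots of unity, on the fibres, and therefore lie in a finite central subgroup'', is the right heuristic (it is exactly the theta-group/Heisenberg phenomenon that the present paper's \autoref{def:H} is designed to model), but as stated it is an assertion: you must explain why a commutator of a lifted translation and a lifted fibre automorphism is central in $N$ --- central, not merely contained in $F_0$ --- and why the linear part of the Albanese action does not obstruct this. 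Without that step the argument proves only that $\Bir(X)$ is Jordan up to soluble subgroups of bounded derived length, which is the older Prokhorov--Shramov statement, not the class-$2$ refinement being claimed.
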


\begin{thm}[{\cite[Remark 6.9]{ProkhorovShramov2014} or  \cite[Theorem~15]{guld2019finiteDnilpotent} for details}]\label{rem:boundedRankBir}
	The rank of every finite group acting birationally on a variety $X$ over a field of characteristic zero is bounded by a function of $X$.
\end{thm}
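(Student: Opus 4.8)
The plan is to reduce the statement to a bound on the ranks of elementary abelian $p$-subgroups and then to extract such a bound from the birational geometry of $X$, using only invariants that are themselves controlled by $X$. First I would reduce the ground field to $\C$: a variety $X$ over a field of characteristic zero, together with a finite group of birational self-maps, is defined over a finitely generated subfield, which embeds into $\C$, and neither the rank of a finite group nor the meaning of ``a function of $X$'' is affected by this base change. Next I would pass to $p$-local data: it is standard that the rank of a finite group is bounded (up to an additive constant) in terms of the maximum of the ranks of its Sylow subgroups, and, for a fixed prime, the rank of a finite $p$-group is in turn bounded by a function of the largest rank $s$ of an elementary abelian subgroup. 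Hence it suffices to bound $s$ whenever $(\Z/p)^s$ acts faithfully by birational transformations on $X$.

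The second step is regularisation. By resolution of singularities in characteristic zero, together with the classical fact that a faithful birational action of a finite group can be made biregular on a suitable model, I obtain a smooth projective variety $Y$, birational to $X$, on which $P=(\Z/p)^s$ acts faithfully by automorphisms. The essential point is that $Y$ itself depends on $P$, so I am allowed to use only those features of $Y$ that are birational invariants: crucially $\dim Y=\dim X=:n$, and the Hodge numbers $h^{i,0}$ of a smooth projective model are birational invariants, in particular the irregularity $q\leteq h^{1,0}$. Both $n$ and $q$ are therefore functions of $X$ alone.

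The heart of the argument is to bound $s$ for a faithful biregular action of $P$ on $Y$ in terms of $n$ and $q$. I would split $P$ along the Albanese morphism $Y\to A\leteq\operatorname{Alb}(Y)$, an abelian variety of dimension $q$. Functoriality of the Albanese yields a homomorphism $\Aut(Y)\to A\rtimes\Aut_{\mathrm{grp}}(A)$ onto the group of affine transformations of $A$; the image of $P$ has $p$-rank bounded in terms of $q$, since its translation part lands in $A[p]\cong(\Z/p)^{2q}$ while its linear part lands in a finite subgroup of $\GL_{2q}(\Z)$, whose order (hence rank) is bounded by Minkowski's theorem. It then remains to bound the rank of the kernel $K\le P$ of automorphisms inducing the identity on $A$. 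These preserve the maximal rationally connected fibration of $Y$, which is canonical, and act on its rationally connected fibres of dimension at most $n$; their rank is therefore bounded by the Jordan-type rank bound for finite groups acting birationally on rationally connected varieties, which is the deep input of \cite{ProkhorovShramov2014} building on \cite{prokhorov2016jordan}. Combining the two bounds gives $s\le C(n,q)$, and hence the desired bound on $\rank(G)$ depending only on $X$.

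I expect the main obstacle to be precisely the control of the kernel $K$, that is, the automorphisms acting trivially on the Albanese. This is where one genuinely needs the rationally connected case of the Jordan property together with a careful analysis of the maximal rationally connected fibration: one must ensure that the contribution of the non-uniruled base is entirely captured by the Albanese, and that the residual action is faithful after restriction to a general fibre, so that the rationally connected rank bound actually applies. The reductions in the first two paragraphs are essentially formal, whereas this birational-geometric input for $K$ is the substance of the theorem.
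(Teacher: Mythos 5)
A preliminary remark: the paper does not prove this statement at all --- it is quoted verbatim from the literature (Prokhorov--Shramov, Remark~6.9, with details in Guld, Theorem~15), so there is no in-paper argument to compare yours against; I can only measure your proposal against the strategy of those sources. Your first two paragraphs are fine and do match the standard opening moves: spreading out over a finitely generated subfield of $\C$, reducing the rank of $G$ to the ranks of its Sylow subgroups and then to the maximal rank $s$ of an elementary abelian $p$-subgroup, regularising the action on a smooth projective model $Y$, and retaining only the birational invariants $n=\dim X$ and $q=h^{1,0}$. The bound on the image of $P$ in $\operatorname{Alb}(Y)\rtimes\Aut(\operatorname{Alb}(Y))$ via $p$-torsion of the torus and Minkowski's theorem for $\GL_{2q}(\Z)$ is also correct.

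The genuine gap sits exactly where you place your ``main obstacle'', and it is not a technical verification but a false step: the non-uniruled part of $Y$ is \emph{not} captured by the Albanese. The base $Z$ of the maximal rationally connected fibration is non-uniruled but can be entirely invisible to $\operatorname{Alb}(Y)$: for $Y=Z\times\mathbb{P}^1$ with $Z$ a K3 or Enriques surface, or a variety of general type with $h^{1,0}=0$, one has $\operatorname{Alb}(Y)=0$, so your kernel $K$ is all of $P$, and $K$ may act nontrivially on $Z$, permuting the fibres of the MRC fibration; there is then no faithful action of $K$ on a rationally connected fibre to which the rank bound for rationally connected varieties could be applied. Your two-step d\'evissage (affine transformations of the Albanese, plus a kernel acting fibrewise) therefore omits precisely the finite birational automorphisms of non-uniruled varieties with vanishing irregularity. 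The cited proofs supply this as a separate, independent ingredient --- Prokhorov--Shramov's theorem that $\Bir(Z)$ has \emph{bounded} finite subgroups when $Z$ is non-uniruled with $q(\tilde Z)=0$, established via minimal models, the Iitaka fibration and pluricanonical/cohomological representations rather than via the Albanese --- and only then combine it with the Albanese torsor and the rationally connected case (the deep input resting on boundedness of Fano varieties). Without that third ingredient the argument does not close.
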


The situation for manifolds is similar, but no bound on the nilpotency class is known 
apart from low dimensional cases \cite[Theorem 1.1]{Riera4dim}. 
The sharpness of \autoref{thm:mainAction} and \autoref{lem:boundedRankDiff} is an open question, cf. \cite[Question 1.5]{nilpotentJordanHomeo}.

\begin{thm}[{\cite[Theorem 1.3]{nilpotentJordanHomeo}}]\label{thm:homeoNilpotent}
	The homeomorphism group of every compact topological manifold is nilpotently Jordan.
\end{thm}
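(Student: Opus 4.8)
The plan is to fix a compact manifold $M$ of dimension $n$ and to produce, for every finite subgroup $F\le \Homeo(M)$, a nilpotent subgroup of index bounded by a constant $J=J(M)$ and of class bounded by some $c(M)$, which is exactly what nilpotently Jordan demands. The first step I would take is a reduction using the action of $F$ on the integral cohomology ring $\HH^*(M;\Z)$. This gives a homomorphism $\rho\from F\to \Aut(\HH^*(M;\Z))$, and since the image lies in the automorphism group of a finitely generated abelian group, a Minkowski-type bound shows that every finite subgroup of it has order at most some $C(M)$. Hence $F_0\leteq\ker\rho$, the subgroup acting trivially on cohomology, has index $|F:F_0|\le C(M)$, and it suffices to treat \emph{homologically trivial} finite subgroups $F_0$.

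The heart of the argument is the analysis of $F_0$ by Smith theory. First, the theorem of Mann and Su bounds, uniformly in the prime $p$, the rank of every elementary abelian $p$-subgroup of $F_0$ by a constant $r(M)$, so $F_0$ has bounded rank. This already controls the nilpotent part, but bounded rank alone is not enough: infinite families such as $\mathrm{PSL}_2(\F_p)$ or Frobenius groups $\Z/p\rtimes\Z/m$ have rank at most $2$, yet their largest nilpotent subgroup has unbounded index, so they must be excluded topologically for fixed $M$. The mechanism I would use is fixed-point analysis: homological triviality forces, via the Smith sequence, that the fixed set $M^P$ of a $p$-subgroup $P$ has mod-$p$ cohomology of the same total dimension as $M$, in particular $\chi(M^P)=\chi(M)$; a complementary cyclic group that acts freely on such a fixed set must have order dividing this Euler characteristic, which is bounded once $\chi(M)\ne 0$. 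Iterating this over the normaliser and centraliser structure is meant to bound the non-nilpotent complexity of $F_0$ --- the orders of Frobenius complements and of non-abelian simple sections --- by a constant depending only on $M$.

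With both a rank bound and a bound on simple sections and Frobenius structure in hand, the final step is purely group-theoretic and rests on the classification of finite simple groups: the structure theory of finite groups of bounded rank yields a normal subgroup of bounded index built from a soluble radical and a product of boundedly many simple groups of bounded rank, and feeding in the topological bounds collapses the non-nilpotent factors, leaving a nilpotent subgroup of bounded index and bounded class. The hard part will be the middle step: carrying out the Smith-theoretic exclusion of the infinite non-nilpotent families in the purely topological category, where one cannot linearise the action near a fixed point as in the smooth case and must argue entirely through equivariant cohomology and Euler-characteristic divisibility. This is the manifold analogue of the boundedness input that makes \autoref{thm:birational2Nilpotent} work for varieties.
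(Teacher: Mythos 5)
This statement is not proved in the paper at all: it is quoted from Theorem~1.3 of the cited external reference and used purely as motivation for \autoref{thm:mainAction}, so there is no in-paper argument to measure your proposal against. Judged on its own terms, your outline assembles the standard ingredients (the Minkowski-type reduction to homologically trivial subgroups, the Mann--Su rank bound, Smith theory, and the CFSG-based structure theory of finite groups of bounded rank), and you correctly identify that bounded rank alone cannot suffice because of $\mathrm{PSL}_2(\F_p)$ and the Frobenius groups $\Z/p\rtimes\Z/m$. But the step that is supposed to exclude these families topologically is precisely where the argument is missing, and the mechanism you propose does not work as stated.

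Concretely: Smith theory gives the \emph{inequality} $\sum_i \dim_{\F_p} H^i(M^P;\F_p)\leq\sum_i \dim_{\F_p} H^i(M;\F_p)$ together with, for a homologically trivial finite-order homeomorphism $g$ of a compact ANR, the Lefschetz-type identity $\chi(M^g)=\chi(M)$; it does not give equality of total mod-$p$ Betti numbers (free homologically trivial $\Z/p$-actions exist, e.g.\ on $S^3$, where the fixed set is empty). More importantly, the Euler-characteristic divisibility argument is vacuous whenever $\chi(M)=0$, which covers every odd-dimensional manifold and every torus, so it cannot be the boundedness input that eliminates the Frobenius and simple families in general. Finally, even granting bounds on simple sections, passing from ``soluble of bounded rank with bounded Frobenius complements'' to ``nilpotent of bounded index'' is an additional group-theoretic step that you assert rather than prove. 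These are not presentational issues: the middle step you yourself flag as ``the hard part'' is the entire content of the cited theorem, so what you have is a plausible strategy sketch rather than a proof.
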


\begin{thm}[{\cite[Theorem~1.8]{CsikosMundetPyberSzabo}}]\label{lem:boundedRankDiff}
	The rank of every finite group acting continuously on a compact manifold $M$ is bounded by a function of $M$.
\end{thm}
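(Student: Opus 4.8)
The plan is to reduce the bound on the rank of an arbitrary finite group acting effectively on $M$ first to a bound on the ranks of its $p$-subgroups, and then to reduce the latter to the classical constraints that Smith theory imposes on elementary abelian group actions. The starting observation is that one need not analyse the whole group $G\le\Homeo(M)$ at once: by the theorem of Guralnick and Lucchini (which rests on the classification of finite simple groups), if every Sylow subgroup of a finite group $H$ can be generated by $d$ elements then $H$ itself is $(d+1)$-generated. Applying this to each subgroup $H\le G$, and noting that a Sylow $p$-subgroup of $H$ is a $p$-subgroup of $G$ and hence lies in a Sylow $p$-subgroup of $G$, one obtains
\[
	\rank(G)\le \max_p \rank(P_p)+1,
\]
where $P_p$ denotes a Sylow $p$-subgroup of $G$. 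Thus it suffices to bound $\rank(P)$ for an arbitrary $p$-subgroup $P\le G$, uniformly over all primes $p$.

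Next I would pass from the generation rank of a $p$-group to its elementary abelian subgroups. By the structure theory of $p$-groups of bounded rank, a finite $p$-group in which every elementary abelian subgroup has rank at most $e$ has generation rank bounded by a function of $e$; hence it is enough to bound $e_p(P)$, the largest $r$ with $(\Z/p\Z)^r\le P$. The final step bounds $e_p(P)$ by Smith theory in the form of the Mann--Su theorem: if $(\Z/p\Z)^r$ acts effectively on a finite-dimensional space with finitely generated mod-$p$ cohomology, then $r$ is controlled by $\cohomology{*}{M}{\F_p}$ together with $\dim M$; a compact manifold is such a space, including in the merely continuous (rather than smooth) category, since it is a finite-dimensional ANR with finitely generated cohomology. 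Chaining the three reductions produces a bound on $\rank(G)$ depending only on $M$.

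The main obstacle I anticipate is uniformity in the prime $p$, since the conclusion must hold simultaneously for infinitely many primes while two of the three ingredients are naturally phrased for a fixed $p$. On the Smith-theoretic side this is benign: the integral homology of a fixed compact $M$ has only finitely much torsion, so the $\F_p$-Betti numbers agree with the rational ones for all but finitely many $p$ and are uniformly bounded for the exceptional primes, giving a $p$-independent Mann--Su bound. The delicate point is the $p$-group step, where one must verify that the bound relating generation rank to the maximal elementary abelian rank can be taken independent of $p$, with the prime $p=2$ typically requiring separate treatment. Managing this small-prime behaviour, and assembling the three bounds into a single explicit function of $M$, is where the real work lies.
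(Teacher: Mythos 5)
This theorem is quoted verbatim from \cite{CsikosMundetPyberSzabo} and the present paper contains no proof of it, so there is no internal argument to compare against. Your outline is essentially the argument of the cited source: reduce the rank of $G$ to the ranks of its Sylow subgroups via Guralnick--Lucchini, reduce the rank of a $p$-group to the maximal rank of its elementary abelian subgroups, and bound the latter uniformly in $p$ by the Mann--Su theorem (applicable since a compact manifold is a finite-dimensional ANR with finitely generated cohomology, and effectiveness of the action is of course implicit in the statement). The two uniformity issues you flag --- only finitely many primes contribute torsion to $H^*(M;\Z)$, and the $p$-group step needs a bound independent of $p$ with $p=2$ treated separately --- are precisely the points requiring care, and both are known to work out, so the proposal is correct as a proof sketch.
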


\paragraph{Structure of the paper}
In \autoref{sec:subdirectProduct}, we prove \autoref{part:mainStructureSubdirect} of \autoref{thm:mainStructure} using induction and find the smallest number of factors needed for the subdirect product. 
In \autoref{sec:alterntingModules}, we turn our attention to nilpotent groups $G$ of class at most~$2$ with cyclic centre. 
The commutator map on $G$ induces an alternating bilinear map on the $\Z$-module $G/G'$. 
We prove \autoref{part:mainStructureCentral} of \autoref{thm:mainStructure} by finding a suitable generating set  imitating the Darboux basis of symplectic vector spaces. 
We define an additional isotropic complex structure on $G/\Center(G)$. 
In \autoref{sec:HeisenbergGroups}, this additional structure enables us to assign a Heisenberg group to $G$ using a method that is independent of the prime divisors of $|G|$. 
Finally, in \autoref{sec:HeisenbergEmbedding}, we discuss the general idea of modifying the previous construction by extending the centre so that actually $G$ embeds to the resulting Heisenberg group.
Then we prove \autoref{thm:mainHeisenberg} in two steps. First, we consider the special case when the group has cyclic centre and apply the method of the previous two sections. 
Second, we use the reduction of \autoref{sec:subdirectProduct} to handle the general case.

\[\begin{tikzcd}[column sep=4em,row sep=2em]
	\cbox{0.3\textwidth}{nilpotent group $G$ of class at most~$2$} 
		\ar[d,rightsquigarrow,"\text{reduction}","\text{\autoref{sec:subdirectProduct}}"'] 
		\ar[rr,dotted,inclusion,"\text{\autoref{thm:embedToH}}"']
	&
	& \cbox{0.3\textwidth}{$\matrixGroupH{A}{B}{C}{\mu}$} 
	\\
	\cbox{0.3\textwidth}{nilpotent groups $G_i$ of class at most~$2$ with cyclic centres} 	\ar[r,rightsquigarrow,"\text{\autoref{sec:alterntingModules}}"',"{[-,-]}"] 
		\ar[rr,mono,dashed,bend left=15, inclusion, "\text{\autoref{thm:maximalCbACyclicCentreEmbedsToHeisenberg}}"']
	& 
	\cbox{0.23\textwidth}{Hermitian forms with isotropic structure} 
		\ar[r,rightsquigarrow,"\text{\autoref{sec:HeisenbergGroups}}"']
	& 
	\cbox{0.3\textwidth}{$\matrixGroupH{A_i}{B_i}{C_i}{\mu_i}$} 
		\ar[u,rightsquigarrow,"\text{\autoref{sec:subdirectProduct}}","\prod_i"']
\end{tikzcd}\]
To help the reader, we demonstrate the details and the ideas of the construction above for the two non-abelian $p$-groups of order $p^3$ 
in a series of example running from  \autoref{sec:alterntingModules} to \autoref{sec:HeisenbergEmbedding}.

The present paper is the first of two compiled from the thesis of the author \cite{phd}.

\paragraph{Notation}
The \emph{cardinality} of a set $X$ is written as $|X|$. 
Following the convention of set theory, we abbreviate $\bigcap_{x\in X} x$ by $\bigcap X$.

\emph{Divisibility} is denoted by $n\divides k$, and the least common multiple of integers $n_1,\dots,n_k$ is written as $\lcm(n_1,\dots,n_k)$. (Note that the divisibility symbol is slightly taller than the one denoting the cardinality.) 

We apply maps \emph{from the left}. 
For $f\from X\to Y$ and a subset $X_0\subseteq X$, we write $f|_{X_0}\from X_0\to Y$ for its  \emph{restriction}, and $f(X_0)\leteq \{f(x_0):x_0\in X_0\}$ for its \emph{image}.  
Denote the \emph{direct product} of maps $f_i\from X_i\to Y_i$ by $f_1\times f_2\from X_1\times X_2\to Y_1\times Y_2$ which is defined by $(x_1,x_2)\mapsto (f_1(x_1),f_2(x_2))$.

Indicate a \emph{monomorphism} by $\arrowInline{mono}$, 
an \emph{epimorphism} by $\arrowInline{epi}$ and 
an isomorphism by $\arrowInline{iso}$.
We write $\arrowInline{identity}$ for the \emph{identity map} and 
$X\arrowInline{inclusion} Y$ for the \emph{natural inclusion map} for $X\subseteq Y$. 
In bigger diagrams, we use $\arrowInline{solid}$, 
$\arrowInline{dashed}$ and  
$\arrowInline{dotted}$ respectively to indicate whether the arrow appeared in the beginning, in the middle or in the end of the construction.

Let $G$ denote a group. 
We denote the \emph{identity element} of $G$ by $1$ (or sometimes by $0$ when $G$ is an additive abelian group). 
By abuse of notation, we also write $1$ (or $0$) for the \emph{trivial group}. 
We write $N\normal G$ to indicate a \emph{normal} subgroup $N$. 
For a positive integer $n$, we write $\Z_n\leteq \Z/n\Z$ for the cyclic group of order $n$ and abbreviate $k\Z\in\Z_n$ by $[k]$ if $n$ is understood.
For a subset $S\subseteq G$, $\generate{S}$ denotes the subgroup generated by $S$, 
and we write $\generate{g_1,\dots,g_n}\leteq \generate{\{g_1,\dots,g_n\}}$. 
We write $d(G)$ for the \emph{cardinality of the smallest generating set},  
and say that $G$ is \emph{$d$-generated} if $d=d(G)$.
We write $[-,-]\from G\times G\to G',(g,h)\mapsto[g,h]$ for the \emph{commutator map} where we use the convention $[g,h]\leteq g^{-1}h^{-1}gh$. 
Denote the \emph{commutator subgroup} (or \emph{derived subgroup}) of $G$ by $G'\leteq [G,G]$, 
the \emph{centre} by $\Center(G)$, 
and the \emph{exponent} by $\exp(G)$. 

\section{Subdirect product decomposition}\label{sec:subdirectProduct}

\begin{summary}
	The goal of this section is to prove \autoref{part:mainStructureSubdirect} of \autoref{thm:mainStructure} from \autopageref{thm:mainStructure} by passing to abelian groups. 
	To show the existence of such a subdirect product, we recursively take quotients by the invariant factors of the centre. 
	To attain the minimal number of factors, we consider intersections with the centre.
\end{summary}

In this section, we write $\CC$ for the class of groups with cyclic centre.
\begin{defn}\label{defn:CdecompositioninG}
	A \emph{$\CC$-decomposition in a group $G$} is a \emph{finite} set $D$ of normal subgroups of $G$ such that $G/N\in\CC$ for every $N\in D$ and $\bigcap D=1$. (Use the convention $\bigcap D = G$ if $D=\emptyset$.)
	Let $m_\CC(G)$ denote the minimal $|D|$ amongst all $\CC$-decomposition $D$ in $G$, or $\infty$ if no such decompositions exist.
\end{defn}

\begin{rem}\label{rem:associatedEmbedding}
	This is a reformulation of subdirect products, as the  \emph{associated (central) embedding}
	$\mu_D\from G\embeds G/D\leteq \prod_{N\in D}G/N,
		gK\mapsto(gN)_{N\in D}$ 
	makes $G$ a subdirect product of groups from $\CC$. 
\end{rem}

\begin{lem}\label{lem:mCFinite}
	There is a $\CC$-decomposition in every finite group $G$.
	Furthermore, $d(\Center(G))\leq m_\CC(G)$. 	
\end{lem}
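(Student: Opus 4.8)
The statement splits into an existence claim and the lower bound $d(\Center(G))\leq m_\CC(G)$, which I would treat separately. Throughout I would lean on the elementary observation that any subgroup of $\Center(G)$ is normal in $G$, so central subgroups are always available as kernels and all the quotients appearing below are legitimate.

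For existence I would induct on $|G|$. If $\Center(G)$ is cyclic then $G\in\CC$ and $D=\{1\}$ is already a $\CC$-decomposition. Otherwise $\Center(G)$ is a non-cyclic finite abelian group, so it splits as an internal direct product $\Center(G)=A\times B$ with $A,B\neq 1$ (for instance by peeling off one invariant factor); both $A$ and $B$ are then normal in $G$ and $A\cap B=1$. Since $|G/A|,|G/B|<|G|$, induction yields finite $\CC$-decompositions $D_A$ in $G/A$ and $D_B$ in $G/B$. Pulling these back along the quotient maps $G\to G/A$ and $G\to G/B$ and taking the union $D$ of the resulting normal subgroups of $G$ gives the desired decomposition: each member $N$ of $D$ satisfies $G/N\cong (G/A)/(N/A)\in\CC$ (or the analogue for $B$), while $\bigcap D=A\cap B=1$, because the preimages coming from $D_A$ intersect in $A$ and those from $D_B$ intersect in $B$. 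This establishes $m_\CC(G)<\infty$.

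For the lower bound I would fix a $\CC$-decomposition $D=\{N_1,\dots,N_k\}$ with $k=m_\CC(G)$ and set $Z\leteq\Center(G)$. Since a central element of $G$ stays central in each quotient, the associated embedding of \autoref{rem:associatedEmbedding} restricts to a homomorphism $Z\to\prod_{i=1}^{k}\Center(G/N_i)$, $z\mapsto(zN_i)_i$, whose kernel is $Z\cap\bigcap_i N_i=1$; hence it is injective. This is exactly where the intersections with the centre enter, as $N_i\cap Z$ is the kernel of the $i$-th component, so $Z$ embeds into the product of the $k$ cyclic groups $\Center(G/N_i)$. To finish I would argue prime by prime, using $d(Z)=\max_p\dim_{\F_p}(Z[p])$ with $Z[p]=\{z\in Z:pz=0\}$: for each prime $p$ the embedding restricts to $Z[p]\hookrightarrow\prod_{i=1}^{k}\Center(G/N_i)[p]$, and each factor on the right is the $p$-torsion of a cyclic group, hence of $\F_p$-dimension at most $1$, so $\dim_{\F_p}(Z[p])\leq k$. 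Taking the maximum over $p$ yields $d(\Center(G))\leq k=m_\CC(G)$.

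Both halves are short. The only bookkeeping that needs care in the existence step is verifying that the union of the two pulled-back families intersects trivially and that every quotient in it lies in $\CC$. For the lower bound, I expect the conceptual crux to be the passage from an embedding of a finite abelian group into a product of $k$ cyclic groups to the bound $d(Z)\leq k$; it is elementary but carries the real content, and reducing it to the $\F_p$-dimension of the $p$-torsion is the step I would present most carefully.
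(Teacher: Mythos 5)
Your proposal is correct and follows essentially the same route as the paper: existence by induction, splitting off direct factors of the centre and pulling back $\CC$-decompositions of the quotients, and the lower bound by restricting the associated subdirect embedding to $\Center(G)$ so that it lands in a product of $m_\CC(G)$ cyclic groups. The only cosmetic differences are that the paper peels off all invariant factors of the centre at once (inducting on the length of a normal series rather than on $|G|$) and justifies $d(\Center(G))\leq|D|$ by citing monotonicity of $d$ on abelian groups, where you make the same point explicit via $\dim_{\F_p}$ of the $p$-torsion.
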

\begin{proof}
	Let $l(G)$ be the maximal length of a strictly increasing subgroup series consisting of normal subgroups of $G$. Note that $l(G/N)<l(G)$ for any non-trivial normal subgroup $N$ of $G$ as a series $K_0/N<K_1/N<\dots<K_n/N$ of normal subgroups of $G/N$ induces $1<N<K_1<\dots <K_n$ in $G$.
	Write $\Center(G)=\prod_{i=1}^{d} C_i$ where $C_i$ are non-trivial cyclic groups. 
	If $d\leq 1$ (for example when $l(G)=0$), then $D=\{1\}$ is a $\CC$-decomposition in $G$. 
	Otherwise, by induction on $l(G)$, there are $\CC$-decompositions $D_i$ in $G/C_i$. 
	Lift $D_i$ to a set of normal subgroups $\bar D_i$ of $G$ containing $N$. i.e. $D_i=\{K/C_i:K\in\bar D_i\}$. 
	We claim that $D\leteq \bigcup_{i=1}^{d} \bar D_i$ is a $\CC$-decomposition in $G$. 
	Indeed, $D$ is a finite set of normal subgroups of $G$. For every $K\in D$, we have $K/C_i\in D_i$ for some $i$, hence $G/K\isom (G/C_i)/(K/C_i)\in\CC$ as $D_i$ is a $\CC$-decomposition in $G/C_i$. 
	Finally, note that 
	$\bigcap D=
	\bigcap_{i=1}^{d} \bigcap\bar D_i =
	\bigcap_{i=1}^{d} C_i =
	1$.

	For the second part, suppose $D$ is a $\CC$-decomposition in $G$. Then $\mu_D(\Center(G))\subseteq \Center(G/D)=\prod_{N\in D} \Center(G/N)$, so $d(\Center(G))\leq \sum_{N\in D} d(\Center(G/N))\leq |D|$ since $G/N$ has cyclic centre by assumption and using that $d$ is a monotone function on abelian groups.
\end{proof}

The next statement is motivated by an idea of Endre Szabó from a private communication.
\begin{lem}\label{lem:triviallyIntersectionInFiniteAbelian}
	Let $A$ be a finite abelian $p$-group, and let $X$ be a set of subgroups of $A$ such that $\bigcap X=1$. 
	Then there exists $Y\subseteq X$ with $|Y|\leq d(A)$ and  $\bigcap Y=1$. 
\end{lem}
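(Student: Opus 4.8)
The plan is to reduce this group-theoretic statement to a purely linear-algebraic fact over $\F_p$ by passing to the socle $A[p]\coloneqq\{a\in A: pa=0\}$, and then to settle that fact by a greedy dimension count.

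First I would record the two structural facts that drive the reduction. Set $V\coloneqq A[p]$. By the structure theorem for finite abelian $p$-groups, $V$ is an $\F_p$-vector space of dimension $\dim_{\F_p}V=d(A)$. Moreover, a subgroup $N\le A$ is trivial \emph{if and only if} $N\cap V=0$: any nontrivial subgroup of a finite abelian $p$-group contains an element of order $p$ by Cauchy's theorem, hence meets the socle. Finally, intersection commutes with meeting $V$, i.e. $\bigl(\bigcap_{N\in Y}N\bigr)\cap V=\bigcap_{N\in Y}(N\cap V)$ for any $Y$.

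Combining these and writing $V_N\coloneqq N\cap V$ for $N\in X$, the hypothesis $\bigcap X=1$ becomes $\bigcap_{N\in X}V_N=0$ inside $V$, and a subset $Y\subseteq X$ satisfies $\bigcap Y=1$ precisely when $\bigcap_{N\in Y}V_N=0$. So it suffices to prove the following: given subspaces $\{V_N\}_{N\in X}$ of a $d$-dimensional $\F_p$-space with trivial total intersection, some subfamily of size at most $d=d(A)$ already intersects trivially. This I would do greedily. Start with $W_0\coloneqq V$; as long as $W_i=V_{N_1}\cap\dots\cap V_{N_i}\ne 0$, the strict containment $0=\bigcap_{N\in X}V_N\subsetneq W_i$ forces some $N_{i+1}\in X$ with $W_i\not\subseteq V_{N_{i+1}}$, so that $W_{i+1}\coloneqq W_i\cap V_{N_{i+1}}\subsetneq W_i$ and hence $\dim W_{i+1}\le\dim W_i-1$. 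Since $\dim W_0=d$, after at most $d$ steps we reach $W_k=0$ with $k\le d$, and then $Y=\{N_1,\dots,N_k\}$ is the required subset.

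The only non-formal point — and hence the main obstacle — is the reduction to the socle: recognising that triviality of a subgroup is detected by its intersection with $A[p]$, and that this detector commutes with arbitrary intersections. Once that is in place, the greedy argument is routine and, pleasantly, delivers exactly the sharp bound $d(A)$ with no extra bookkeeping.
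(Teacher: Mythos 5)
Your proof is correct and is essentially the paper's argument in different packaging: the paper runs an induction on $d(A)$, at each step using the $p$-torsion subgroup $V(K)=\{g\in K:g^p=1\}$ to find some $B\in X$ with $d(B)<d(A)$ and then recursing inside $B$ with the family $\{B\cap K:K\in X\}$ — which, unrolled, is exactly your greedy dimension-dropping selection in the socle. Both arguments hinge on the same two facts you isolate (triviality of a subgroup is detected by its intersection with $A[p]$, and $\dim_{\F_p}A[p]=d(A)$), so they coincide up to whether the bookkeeping is done with $d(\cdot)$ of subgroups of $A$ or with dimensions of subspaces of $A[p]$.
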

\begin{proof}
	We prove this by induction on $d(A)$. 
	If $d(A)=0$, then $A$ is trivial, and $Y=\emptyset$ works by convention. 
	Otherwise, assume that $d(A)>0$. For any subgroup $K\leq A$, define $V(K)=\{g\in K:g^p=1\}$. Note that $V(K)$ can be considered as an $\F_p$-vector space of dimension $d(K)$.  
	Assume by contradiction that $V(K)=V(A)$ for all $K\in X$. Then $V(A)\subseteq K$, hence $V(A)\subseteq \bigcap X=0$, but this contradicts that $V(A)$ has positive dimension. 
	So we may pick $B\in X$ such that $d(B)<d(A)$. Now $X_B\leteq \{B\cap K:K\in X\}$ is a trivially intersecting set of subgroups of $B$, so by induction, there is $Y_B\subseteq X_B$ of size at most $d(B)$ with trivial intersection. 
	Lift back $Y_B$ to $Z_B\subseteq X$. Then $|Z_B|=|Y_B|$ and $Y_B=\{B\cap K:K\in Z_B\}$. 
	We show that $Y\leteq \{B\}\cup Z_B\subseteq X$ satisfies the claim. 
	Indeed, $|Y|\leq 1+|Y_B|\leq 1+d(B)\leq d(A)$ by construction, and 
	$\bigcap Y=
	B \cap \bigcap Z_B=
	\bigcap_{K\in Z} (B\cap K)=
	\bigcap Y_B=1$.
\end{proof}

\begin{lem}\label{lem:mCpGroup}
	For every finite $p$-group $P$, we have $m_\CC(P)=d(\Center(P))$.
\end{lem}
\begin{proof}
	There is a $\CC$-decomposition $D$ in $P$ by \autoref{lem:mCFinite}. 
	We claim the existence of a $\CC$-decomposition $S\subseteq D$ of size at most $d(\Center(P))$.  
	This then proves the statement as no smaller $\CC$-decomposition may exist by \autoref{lem:mCFinite}.
	
	Let $A\leteq \Center(P)$ and consider $X\leteq \{N\cap A:N\in D\}$, a trivially intersecting set of subgroups of the abelian group $A$. 
	Let $Y\subseteq X$ with $|Y|\leq d(A)$ and $\bigcap Y=1$ be given by \autoref{lem:triviallyIntersectionInFiniteAbelian}.
	Lift $Y$ back to $S\subseteq D$. 
	Then $1=\bigcap Y=\Center(P)\cap \bigcap S$, so we must have $\bigcap S=1$ since in a nilpotent group, there is no non-trivial normal subgroup intersecting the centre trivially. 
	Also, by construction, $|S|=|Y|\leq d(A)=d(\Center(P))$, so $S$ is indeed a $\CC$-decomposition of $P$ with the stated properties.
\end{proof}

\begin{lem}\label{lem:mCSylowMax}
	Let $G$ be a finite nilpotent group. 
	Then $m_\CC(G)=\max\{m_\CC(P):P\in \Syl(G)\}$ where $\Syl(G)=\bigcup_p \Syl_p(G)$ is the set of all Sylow subgroups of $G$.
\end{lem}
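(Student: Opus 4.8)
The plan is to exploit that a finite nilpotent group is the internal direct product of its Sylow subgroups, $G=\prod_{P\in\Syl(G)}P$. This at once gives the primary decomposition $\Center(G)=\prod_{P\in\Syl(G)}\Center(P)$ of the finite abelian group $\Center(G)$, and since distinct Sylow subgroups live over distinct primes, the standard fact about minimal generating sets of abelian groups yields $d(\Center(G))=\max_{P\in\Syl(G)}d(\Center(P))$. By \autoref{lem:mCpGroup} the right-hand side equals $\max_{P\in\Syl(G)}m_\CC(P)$, so the whole statement reduces to comparing $m_\CC(G)$ with this maximum, i.e.\ to proving two inequalities. Throughout I set $m\leteq\max_{P\in\Syl(G)}m_\CC(P)=d(\Center(G))$.

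For the lower bound $m_\CC(G)\ge m$ there is essentially nothing to do: \autoref{lem:mCFinite} already gives $d(\Center(G))\le m_\CC(G)$, and I have just identified $d(\Center(G))$ with $m$.

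The content lies in the upper bound $m_\CC(G)\le m$. First I would take, for each Sylow subgroup $P$, a minimal $\CC$-decomposition $D_P=\{N_{P,1},\dots,N_{P,k_P}\}$ with $k_P=m_\CC(P)\le m$, and pad it to length exactly $m$ by declaring $N_{P,j}\leteq P$ for $k_P<j\le m$; this leaves $\bigcap_j N_{P,j}=1$ intact since $P/P=1\in\CC$. The crucial construction is then to \emph{interleave across primes}: for $j=1,\dots,m$ put $M_j\leteq\prod_{P\in\Syl(G)}N_{P,j}\normal G$. Each quotient is $G/M_j\isom\prod_P(P/N_{P,j})$, whose centre is $\prod_P\Center(P/N_{P,j})$ — a direct product of cyclic $p$-groups over pairwise distinct primes, hence cyclic by coprimality, so $G/M_j\in\CC$. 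Since $g=(g_P)_P$ lies in $\bigcap_{j=1}^m M_j$ iff $g_P\in\bigcap_{j=1}^m N_{P,j}=1$ for every $P$, the intersection is trivial, and $\{M_1,\dots,M_m\}$ is a $\CC$-decomposition of size at most $m$.

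The step I expect to be the only genuine idea, as opposed to bookkeeping, is this interleaving. The naive alternative — lifting each $D_P$ to $G$ via $N_{P,j}\times\prod_{Q\ne P}Q$ and taking the union — does produce a valid $\CC$-decomposition, but of size $\sum_P k_P$, far exceeding $m$. What makes the product $M_j=\prod_P N_{P,j}$ work is precisely that cyclicity of the centre is \emph{preserved under direct products over coprime orders}, so one factor per prime can be merged into a single factor without leaving $\CC$; the only point needing care is checking that this merging keeps the overall intersection trivial, which is the computation carried out above. As a byproduct the argument shows $m_\CC(G)=d(\Center(G))$ for all finite nilpotent $G$, generalising \autoref{lem:mCpGroup}.
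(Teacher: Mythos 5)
Your proof is correct, and the substantive construction is the same as the paper's: the paper forms a partition $\mathcal{D}$ of $\bigcup_p D_p$ with at most one member per prime in each block and takes the products $\prod_{N\in S}N$, which is exactly your interleaved $M_j=\prod_P N_{P,j}$; your explicit padding with $N_{P,j}\leteq P$ is a clean way to make the cyclic-centre check for each $G/M_j$ transparent (a point the paper leaves implicit). The one genuine divergence is the lower bound: the paper proves $m_\CC(G)\geq m_\CC(P)$ structurally, by showing that $\{N\cap P:N\in D\}$ is a $\CC$-decomposition in $P$ for any $\CC$-decomposition $D$ of $G$, whereas you deduce it numerically from $d(\Center(G))\leq m_\CC(G)$ (\autoref{lem:mCFinite}) together with $m_\CC(P)=d(\Center(P))$ (\autoref{lem:mCpGroup}) and the primary decomposition $d(\Center(G))=\max_P d(\Center(P))$. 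Both are valid and non-circular; the paper's version is self-contained at the level of decompositions and does not need \autoref{lem:mCpGroup} for that direction, while yours effectively proves \autoref{prop:subdirectProductFinite} in passing, so that proposition becomes an immediate restatement rather than a separate Chinese-remainder computation.
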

\begin{proof}
	Let $D$ be a $\CC$-decomposition in $G$ and $P\in\Syl(G)$. 
	We claim that $D_P\leteq \{N\cap P:N\in D\}$ is a $\CC$-decomposition in $P$. 
	Indeed, $P$ is a normal subgroup of $G$ because $G$ is finite nilpotent, so $N\cap P$ is a normal subgroup of $G$, hence of $P$. On the other hand $\bigcap N_P=P\cap \bigcap D=1$.
	This shows that $m_\CC(G)\geq m_\CC(P)$ for all $P\in\Syl(G)$.
	
	For the other direction, consider all prime divisors $p$ of $|G|$ and  
	let $D_p$ be a $\CC$-decomposition for the unique Sylow $p$-subgroup $G_p$ of $G$. 
	Let $\mathcal{D}$ be a partition of $\bigcup_p D_p$ of size $\max\{|D_p|:p\}$ such that $|S\cap D_p|\leq 1$ for every $S\in \mathcal{D}$ and $p$. 
	We claim that $D\leteq \{\prod_{N\in S}N:S\in \mathcal{D}\}$ is a $\CC$-decomposition in $G$.
	Indeed, as above, every $N_p\in D_p$ is normal in $G$, so every $N\in D$ is also a normal subgroup of $G$ being the product of such groups in a finite nilpotent group. 
	On the other hand, 
	$G_p\cap \bigcap D = 
	\bigcap \{G_p\cap \prod_{N\in S}N:S\in \mathcal{D}\} = 
	\bigcap \{K:K\in D_p\} = 
	\bigcap D_p=1$ using the assumption on the partition. 
	This shows that $m_\CC(G)\leq \max\{m_\CC(P):P\in\Syl(G)\}$.
\end{proof}

\begin{prop}\label{prop:subdirectProductFinite}
	$m_\CC(G)=d(\Center(G))$ for any finite nilpotent group $G$. 
\end{prop}
\begin{proof}
	Using 	\autoref{lem:mCSylowMax}, \autoref{lem:mCpGroup} and the Chinese remainder theorem, we get 
	\begin{align*}
		m_\CC(G) &= 
		\max\{m_\CC(P):P\in\Syl(G)\}=
		\max\{d(\Center(P)):P\in\Syl(G)\}\\&=
		\max\{d(Q):Q\in\Syl(\Center(G))\}=
		d(\Center(G))
	\end{align*} 
	after noting that $G=\prod_{P\in \Syl(G)} P$ implies $\Syl(\Center(G))=\{\Center(P):P\in\Syl(G)\}$.
\end{proof}

\begin{rem}
	Similar reasoning shows that the statement holds when $G$ is finitely generated, see \cite[\S3.1.2]{phd}.
\end{rem}

\begin{proof}[Proof of {\autoref{thm:mainStructure}/\autoref{part:mainStructureSubdirect}}]
	This is a straightforward consequence of \autoref{defn:CdecompositioninG}, 
	\autoref{rem:associatedEmbedding} and 
	\autoref{prop:subdirectProductFinite}.
\end{proof}

\section{Alternating modules}\label{sec:alterntingModules}
\begin{summary}
	In this section, we introduce alternating modules to generalise the notion of symplectic vector spaces: 
	the role of vector spaces is replaced by modules over principal ideal domains, and symplectic forms by alternating maps taking values in cyclic modules. 
	The key examples of such structures come from the abelianisation of nilpotent groups of class at most~$2$ together with the map induced by the commutator. 
	We show that, under some conditions, alternating modules  possess an analogue of the Darboux basis. 
	Using this, on one hand, we prove \autoref{part:mainStructureCentral} of \autoref{thm:mainStructure} from \autopageref{thm:mainStructure}, 
	on the other hand, we endow the alternating module with a non-canonical isotropic complex structure.
\end{summary}

We compare and contrast the following groups to illustrate the key ideas of the proof of \autoref{thm:mainHeisenberg}. These groups differ just enough to display the subtleties of the whole argument while keeping the complexity to a minimum.
This example is broken into several smaller parts spanning the rest of the paper. 
\begin{exmp}[$\extraSPecialD{n}$, $\extraSPecialQ{n}$: Definition]\label{exmp:DQdef}
	For any integer $n\geq 2$, define two $2$-generated nilpotent groups of class $2$ and of order $n^3$ by the following presentation.
	\begin{align*}
		\extraSPecialD{n}&\leteq \generate{\alpha,\beta,\gamma:
			\gamma=[\alpha,\beta],
			1=[\gamma,\alpha]=[\gamma,\beta], 
			\alpha^{n}=\beta^{n}=1,
			\gamma^n=1
		}\\
		\extraSPecialQ{n}&\leteq \generate{\alpha,\beta,\gamma:
			\gamma=[\alpha,\beta],
			1=[\gamma,\alpha]=[\gamma,\beta], 
			\alpha^{n}=\beta^{n}=\gamma,
			\gamma^n=1
		}
	\end{align*}
	This non-standard notation is motivated by the fact that $\extraSPecialD{2}\isom D_8$ (the dihedral group) and
	$\extraSPecialQ{2}\isom Q_8$ (the quaternion group).  
	For every prime $p$, every non-abelian group of order $p^3$ is isomorphic to one of $\extraSPecialD{p}$ and $\extraSPecialQ{p}$. These two (so-called extra-special) $p$-groups are the building blocks of \autoref{thm:specialPgroups}.
	Just like $D_8\not\isom Q_8$, we have $\extraSPecialD{n}\not\isom\extraSPecialQ{n}$ for every $n\geq 2$. 
	Indeed, a short computation shows that 
	$\exp(\extraSPecialD{n})=2n$ for even $n$ and 
	$\exp(\extraSPecialD{n})=n$ for odd $n$, whereas 
	$\exp(\extraSPecialQ{n})=n^2$ for every $n$. 
\end{exmp}

\subsection{Darboux generators and central product decomposition}

We start the main part of this section with an elementary statement about matrices.
\begin{lem}[`Alternating Smith' normal form]\label{lem:alternatingSmith}
	Let $R$ be a principal ideal domain, 
	$W\in R^{n\times n}$ be an alternating matrix (i.e. $W^\top = -W$ and has $0$'s at the main diagonal). 
	Then for $s=\frac{1}{2}\rank(W)\in\Z$, there exist elements $d_1\mid d_2\mid\dots\mid d_s\neq 0$ in $R$ (unique up to unit multiples) and $B\in \SL_n(R)$ such that 
	\begin{equation}
		B^\top W B = \diag\left(
		\begin{pmatrix}0 & d_1 \\ -d_1 & 0\end{pmatrix},
		\begin{pmatrix}0 & d_2 \\ -d_2 & 0\end{pmatrix},
		\dots,
		\begin{pmatrix}0 & d_s \\ -d_s & 0\end{pmatrix},
		0,\dots,0\right)
		.
		\label{eq:alternatingSmith}
	\end{equation}
\end{lem}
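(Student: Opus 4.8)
The plan is to prove this by induction on $n$, realising $B$ as a product of explicit congruence operations. The operations available to us are exactly those preserving the alternating property: for $i\neq j$ and $c\in R$, conjugating by $I+cE_{ij}$ (with $E_{ij}$ the matrix unit) simultaneously adds $c$ times column $i$ to column $j$ and $c$ times row $i$ to row $j$, and we may also permute coordinates simultaneously in rows and columns. Each $I+cE_{ij}$ lies in $\SL_n(R)$, as do their products; the only moves that could leave $\SL_n(R)$ are the coordinate transpositions, whose sign I will correct at the very end. If $W=0$ we are done with $s=0$ and $B=I$, so assume $W\neq 0$.

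The crux is to produce, by congruence, a pivot equal to a generator $d_1$ of the ideal generated by all entries of $W$ (the \emph{content} of $W$), placed in position $(1,2)$ with the rest of the first two rows and columns cleared. First I would invoke Bézout in the PID $R$: given two first-row entries $a=W_{1j}$ and $b=W_{1k}$, an $\SL_2(R)$-transformation acting only on coordinates $j,k$ replaces $a$ by $\gcd(a,b)$ and $b$ by $0$ while preserving the alternating form; iterating over the first row collapses it to a single entry $g$, the gcd of that row, in position $(1,2)$. If $g$ already divides every entry of $W$, set $d_1=g$. Otherwise some $W_{pq}$ is not divisible by $g$, and the congruence adding row $p$ to row $1$ and column $p$ to column $1$ creates a first-row entry outside $(g)$; re-running the collapse then strictly enlarges the ideal $(g)$, so by Noetherianity of $R$ this terminates with $d_1\mid W_{ij}$ for all $i,j$. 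The first row is now $(0,d_1,0,\dots,0)$, hence the first column is its negative transpose, and since $d_1\mid W_{2k}$ I can clear the remainder of the second row and column too. Up to congruence this splits $W$ into the block $\bigl(\begin{smallmatrix}0&d_1\\-d_1&0\end{smallmatrix}\bigr)$ together with an alternating $(n-2)\times(n-2)$ matrix $W'$ all of whose entries are divisible by $d_1$. Applying the inductive hypothesis to $W'$ yields $d_2\mid\dots\mid d_s$ and an $\SL_{n-2}(R)$ witness; assembling gives $B\in\SL_n(R)$ (up to the sign fix), and $d_1\mid d_2$ because $d_2$ is the content of $W'$, which $d_1$ divides.

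It remains to normalise the determinant and to establish uniqueness. If the assembled $B$ satisfies $\det B=-1$, I replace it by $B\,\diag(-1,1,\dots,1)$, which lands in $\SL_n(R)$ and only flips the sign of $d_1$; this is harmless since the $d_i$ are claimed only up to units. For uniqueness, note that after a simultaneous reordering of coordinates the normal form is $\diag(d_1,d_1,d_2,d_2,\dots,d_s,d_s,0,\dots,0)$, and the chain $d_1\mid d_1\mid d_2\mid d_2\mid\cdots$ makes this precisely a Smith normal form of $W$ under two-sided equivalence. Hence $d_1,d_1,\dots,d_s,d_s$ are the invariant factors of $W$, determined up to units by the uniqueness of Smith normal form over a PID (equivalently, via the determinantal divisors, the gcds of the $k\times k$ minors); in particular $\rank(W)=2s$, so $s=\tfrac12\rank(W)$. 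I expect the main obstacle to be exactly the pivot-extraction step: because the row and column operations must be performed in tandem, the ordinary Smith-form manoeuvres are not directly available, and the delicate point is to show—through the Bézout moves and the Noetherian termination argument—that the content of $W$ can be realised as a single off-diagonal pivot dividing all remaining entries.
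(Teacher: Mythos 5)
Your proof is correct and takes essentially the same route the paper sketches (and defers to the thesis for): congruence transformations $W\mapsto X^\top WX$ with $X\in\SL_n(R)$ driving a Smith-type pivot extraction on the off-diagonal entries, followed by uniqueness via the invariant factors / determinantal divisors. One wording slip worth fixing: the passage from the block form to $\diag(d_1,d_1,\dots,d_s,d_s,0,\dots,0)$ is a column-only permutation (a two-sided equivalence, not a \emph{simultaneous} reordering, which would keep the matrix alternating), but this is exactly the equivalence under which the $k\times k$ minor ideals are preserved, so the uniqueness conclusion stands.
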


\begin{proof}
	
	The idea is similar to the standard proof of Smith normal form \cite[\S3.7]{jacobson1985basic}, but instead of focusing on the main diagonal, we consider the superdiagonal entries. At each step, we choose different pivots and apply the base change $W\mapsto X^\top W X$ (to respect the alternating property) to a series of well-chosen matrices $X\in\SL_n(R)$ until the stated form for $W=(w_{i,j})$ is obtained. 
	Once the existence is verified, the uniqueness statement follows from the fact that for each $1\leq k\leq n$, the ideal generated by the $k\times k$ minors is unchanged under these transformations. 
	See \cite[\S2.3]{phd} for details.
\end{proof}

A key notion of this section is the following analogue of symplectic vector spaces.
\begin{defn}\label{defn:alternatingModule}
	For a  principal ideal domain $R$, 
	we call $(M,\omega, C)$ an \emph{alternating $R$-module}, if 
	$M$ is a finitely generated $R$-module, 
	$C$ is a cyclic $R$-module, and 
	$\omega\from M\times M\to C$ is an $R$-bilinear map that is alternating, i.e. $\omega(m,m)=0$ for every $m\in M$. 
	We say that the alternating $R$-modules $\omega_i\from M_i\times M_i\to C_i$ (for $i\in\{1,2\}$) are \emph{isomorphic}, if there are isomorphisms $\lambda\from M_1\to M_2$ and $\kappa\from C_1\to C_2$ of $R$-modules such that $\kappa\circ\omega_1 = \omega_2\circ (\lambda\times \lambda)$ as $M\times M\to C_2$ maps.
	
	In the case above, for submodules $N,N_1,N_2\leq M$, introduce the following definitions.
	Let $\omega(N_1,N_2)\leq C$ be the submodule generated by $\{\omega(n_1,n_2):n_i\in N_i\}$. 
	Say $N_1$ and $N_2$ are \emph{orthogonal (with respect to $\omega$)}, written $N_1\perp N_2$, if $\omega(N_1,N_2)=0$. 
	Call $N$ \emph{isotropic} if $N\perp N$. 
	Call $N^\perp \leteq \{m\in M:\omega(m,N)=0\}$ the \emph{orthogonal submodule} (and note that $N$ and $N^\perp$ are not necessarily complementary).
	Call $\omega$ and $(M,\omega,C)$ \emph{non-degenerate} if $M^\perp=0$. 
\end{defn}

\begin{lem}[Darboux-generators]\label{lem:DarbouxGenerators}
	Let $(M,\omega,C)$ be an alternating $R$-module. 
	Then there exists a minimal $R$-module generating set $\mathcal{B}$ of $M$, and a subset $\{x_1,y_1,\dots,x_t,y_t\}\subseteq\mathcal{B}$
	such that 
	$\omega(M,M)=R\omega(x_1,y_1)\geq R \omega(x_2,y_2)\geq \dots \geq R \omega(x_t,y_t)\neq 0$ 
	and $\omega(b_1,b_2)=0$ for all other pairs $(b_1,b_2)\in \mathcal{B}^2$.
	
	The chain of submodules of $C$ above is independent of the choice of any such generating set. 
	More concretely, $t=\frac{1}{2}d(M/M^\perp)\in\Z$ and $M/M^\perp \isom \bigoplus_{i=1}^t (R\omega(x_i,y_i))^{\oplus 2}$.
	
\end{lem}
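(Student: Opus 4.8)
The plan is to reduce everything to the matrix statement \autoref{lem:alternatingSmith} and then read off both the Darboux generators and the invariants from the resulting normal form. First I fix a minimal generating set $e_1,\dots,e_n$ of $M$, so $n=d(M)$, together with a generator $\gamma$ of the cyclic module $C$, giving $C\isom R/\Ann(\gamma)$. Since $\omega(e_i,e_i)=0$ and $\omega(e_j,e_i)=-\omega(e_i,e_j)$, I may choose representatives $w_{ij}\in R$ with $\omega(e_i,e_j)=w_{ij}\gamma$ such that $W\leteq(w_{ij})$ is a genuine alternating matrix over $R$. Applying \autoref{lem:alternatingSmith} yields $B\in\SL_n(R)$ with $B^\top W B$ in the displayed block form with invariant factors $d_1\mid\dots\mid d_s\neq0$. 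As $B$ is invertible over $R$, the elements $f_j\leteq\sum_i B_{ij}e_i$ again generate $M$ and number $n=d(M)$, hence form a minimal generating set; and bilinearity gives $\omega(f_j,f_k)=(B^\top W B)_{jk}\gamma$, so $\omega(f_{2i-1},f_{2i})=d_i\gamma$ while every other pairing of the $f_j$ vanishes.

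Next I truncate. Since $d_1\mid\dots\mid d_s$, once $d_i\gamma=0$ all later $d_{i'}\gamma$ vanish too, so the nonzero values occur exactly for $i=1,\dots,t$, where $t$ is the number of indices with $d_i\gamma\neq0$. Setting $x_i\leteq f_{2i-1}$ and $y_i\leteq f_{2i}$ for $i\le t$, and $\mathcal B\leteq\{f_1,\dots,f_n\}$, the divisibility $d_1\mid\dots\mid d_t$ produces the decreasing chain $R\omega(x_1,y_1)\ge\dots\ge R\omega(x_t,y_t)\neq0$ of submodules of $C$, and $\omega(M,M)=\sum_{j,k}R\omega(f_j,f_k)=Rd_1\gamma=R\omega(x_1,y_1)$. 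Each remaining generator $f_j$ with $j>2t$ pairs trivially with everything — either it sits beyond the blocks, or in a block $i>t$ whose value $d_i\gamma$ already vanishes — so it lies in $M^\perp$, which is precisely why it can be omitted from the Darboux subset.

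For the invariants I pass to $M/M^\perp$, on which $\omega$ induces a non-degenerate form. By the last observation $M/M^\perp$ is generated by the images $\bar x_i,\bar y_i$. A short pairing computation gives $\Ann(\bar x_i)=\Ann(\bar y_i)=\Ann(\omega(x_i,y_i))$, so $R\bar x_i\isom R\bar y_i\isom R\omega(x_i,y_i)$; and testing a hypothetical relation $\sum_i(r_i x_i+s_i y_i)\in M^\perp$ against each $x_j$ and $y_j$ forces $r_j\bar x_j=s_j\bar y_j=0$ term by term. Hence the sum is direct and $M/M^\perp\isom\bigoplus_{i=1}^t(R\omega(x_i,y_i))^{\oplus2}$.

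The step I expect to be the main obstacle is the identity $t=\tfrac12 d(M/M^\perp)$: the right-hand module is a sum of $2t$ nonzero cyclic modules, yet over a PID the number of cyclic summands need not equal the minimal number of generators. I resolve this using the chain. Writing $R\omega(x_i,y_i)\isom R/(\kappa_i)$, the divisibility $d_1\mid\dots\mid d_t$ translates into $\kappa_t\mid\dots\mid\kappa_1$ with $\kappa_t$ a non-unit since $\omega(x_t,y_t)\neq0$. Reducing modulo a prime dividing $\kappa_t$ (or tensoring with $\Frac(R)$ when $C$ is free) leaves all $2t$ summands nonzero, so $d(M/M^\perp)\ge2t$; combined with the $2t$ generators already exhibited this gives equality. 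The same divisibility exhibits the $\kappa_i$ as the invariant factors of $M/M^\perp$ counted with multiplicity two, hence determined by $M/M^\perp$ alone; and since distinct submodules of the cyclic module $C$ have distinct isomorphism types, each $R\omega(x_i,y_i)$ is recovered as the unique submodule of $C$ of its type, proving the chain is independent of all choices.
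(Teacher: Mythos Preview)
Your argument follows essentially the same route as the paper's: lift $\omega$ to an alternating matrix over $R$, apply \autoref{lem:alternatingSmith}, read off the Darboux generators, and then establish $M/M^\perp\isom\bigoplus_i(R\omega(x_i,y_i))^{\oplus 2}$. The paper does the last step via an explicit surjection $\omega^\flat\colon M\to\bigoplus_i(R\omega(x_i,y_i))^{\oplus 2}$, $m\mapsto(\omega(m,y_i),\omega(x_i,m))_i$, with kernel $M^\perp$; your ``testing against each $x_j,y_j$'' computes exactly this map, so the two arguments coincide. Your handling of $t=\tfrac12 d(M/M^\perp)$ is more detailed than the paper's bare appeal to the structure theorem.

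There is, however, a gap in the final invariance step. The claim ``distinct submodules of the cyclic module $C$ have distinct isomorphism types'' fails when $C$ is torsion-free: if $C\isom R$ then every nonzero submodule is again free of rank $1$, so knowing the $\kappa_i$ (all zero in this case) does not recover the submodule $R\omega(x_i,y_i)\subseteq C$. For instance, with $R=C=\Z$, $M=\Z^4$, and $\omega$ in block form with $d_1=1$, $d_2=2$, the chain is $\Z\supsetneq 2\Z$; both terms have isomorphism type $\Z$, and your argument cannot distinguish them from that alone. The paper splits into cases here: when $\omega(M,M)$ is torsion it uses exactly your observation, but in the torsion-free case it invokes the uniqueness clause of \autoref{lem:alternatingSmith} directly to pin down the ideals $Rd_i$, and hence the submodules $Rd_i\gamma$, independently of any isomorphism-type reasoning.
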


\begin{proof}
	Let $(M,\omega,C)$ be an alternating module.
	Pick a minimal $R$-module generating set $\{b_1,\dots,b_n\}$ of $M$, and let $c$ be a fixed generator of $\omega(M,M)$. 
	Pick $w_{i,j}\in R$ such that $\omega(b_i,b_j) = w_{i,j} c$. (Note that these are not necessarily unique, but $w_{i,j}+\ann_R(c)\in R/\ann_R(c)$ are.) 
	Without loss of generality, we may assume that $W\leteq (w_{i,j})_{i,j}\in R^{n\times n}$ is an alternating matrix.
	Let $B\in \SL_n(R)$ given by \autoref{lem:alternatingSmith}. 
	Since $B$ is an invertible square matrix, 
	$\mathcal{B}\leteq \{\sum_{j=1}^n B_{j,i} b_j:1\leq i\leq n\}=\{x_1,y_1,\dots,x_s,y_s,\dots\}$ is also a (minimal) generating set of $M$ in which $\omega$ can be expressed at the matrix from \eqref{eq:alternatingSmith}.
	Then the statement follows by setting the integer $t$ so that $d_ic\neq 0$ for $1\leq i\leq t$, and $d_jc=0$  for $t<j\leq s$.

	For the second part, let $x_i,y_i$ be as above. We claim that
	\begin{equation}
		\begin{tikzcd}[align]
		0\ar[r] & M^\perp \ar[r,inclusion] & M\ar[r,"\omega^\flat"] & \bigoplus_{i=1}^t (R\omega(x_i,y_i))^{\oplus 2} \ar[r] & 0 \\
		& & m \ar[r,mapsto] & (\omega(m,y_i),\omega(x_i,m))_{i=1}^t
		\end{tikzcd}
		\label{diag:omegaFlat}
	\end{equation}
	is a short exact sequence of $R$-modules. 
	Indeed, $\omega^\flat$ is well-defined using the bilinearity of $\omega$ and the orthogonality elements of $\mathcal{B}$.
	Note that $\ker(\omega^\flat) = 
	\{m\in M:\forall i\;\omega(x_i,m)=\omega(m,y_i)\}=
	\{m\in M:\forall b\in \mathcal{B}\;\omega(b,m)=0\}=
	M^\perp$.
	On the other hand, using orthogonality once more, 
	$\omega^\flat(\sum_{i=1}^t r_ix_i + s_iy_i)=(r_i,y_i)_i$ for arbitrary $r_i,s_i\in R$, 
	thus $\omega^\flat$ is surjective.	
	Hence $M/M^\perp\isom \bigoplus_{i=1}^t (R\omega(x_i,y_i))^{\oplus 2}$, so $t=\frac{1}{2}d(M/M^\perp)$ and the isomorphism class of the $R$-modules $R\omega(x_i,y_i)$ are invariant by the structure theorem of finitely generated modules over a principal ideal domain.
	If $\ann_R(c)\neq 0$, then $\omega(M,M)$ is a cyclic torsion $R$-module, so its isomorphic submodules are necessarily equal, meaning that the submodules $R\omega(x_i,y_i)\leq \omega(M,M)$ are themselves invariant.
	Otherwise, $\omega(M,M)$ is a free $R$-module of rank $1$ generated by $c$. Thus $\omega(x_i,y_i)=d_ic$ for some unique $d_i(c)\in R$. By \autoref{lem:alternatingSmith}, $Rd_i$ is independent of the choice of the generators of $M$. Hence $Rd_ic=R\omega(x_i,y_i)$ may depend only on $c$, but the right-hand side does not.
\end{proof}

Our main source of alternating $\Z$-modules is the following.
\begin{defn}\label{defn:centralByAbelian}
	A short exact sequence 
	$\epsilon:1\to C\xrightarrow{\iota} G\xrightarrow{\pi} M\to 1$
	of groups is called a \emph{\centralByAbelian{} extension}, if $\iota(C)\subseteq\Center(G)$ and $M$ is abelian. 
	This extension $\epsilon$ is \emph{non-degenerate}, if $\iota(C)=Z(G)$.
\end{defn}

\begin{lem}[The alternating functor $\alternatingFunctor$]\label{lem:alterntingFunctor}
	Every \centralByAbelian{} extension groups 
	\[\begin{tikzcd}[label]
		\epsilon\ar[:] 
		& 1 \ar[r] 
		& C\ar[r,mono,"\iota"] 
		& G\ar[r,epi,"\pi"]
		& M\ar[r] 
		& 1
	\end{tikzcd}\]
	induces an alternating $\Z$-bilinear map 
	\[
		\omega\from M\times M\to C, \qquad 
		(m_1,m_2)\mapsto \iota^{-1}([g_1,g_2]) 
	\]
	defined by arbitrary $g_i\in\pi^{-1}(m_i)$. 
	
	In particular, when $M$ is finitely generated and $C$ is cyclic, then $\alternatingFunctor(\epsilon)\leteq (M,\omega,C)$ is an alternating $\Z$-module.  
\end{lem}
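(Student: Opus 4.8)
The plan is to verify the three defining properties of $\omega$ in turn---well-definedness, $\Z$-bilinearity, and the alternating property---and then to observe that the ``in particular'' clause is immediate from \autoref{defn:alternatingModule} once these are in hand. The single structural fact driving everything is that commutators land in the centre: since $M$ is abelian we have $\pi([g_1,g_2]) = [\pi(g_1),\pi(g_2)] = 1$, so $[g_1,g_2] \in \ker(\pi) = \iota(C) \subseteq \Center(G)$. Thus $\iota^{-1}([g_1,g_2])$ is meaningful (as $\iota$ is injective), and moreover every commutator we encounter is central, which is exactly the leverage needed below.

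First I would establish that $\omega$ does not depend on the chosen lifts. If $g_1' \in \pi^{-1}(m_1)$ is another lift, then $g_1' = g_1 z$ with $z \in \iota(C) \subseteq \Center(G)$; expanding $[g_1 z, g_2]$ and using that $z$ is central collapses the extra factors, giving $[g_1 z, g_2] = [g_1, g_2]$, and symmetrically in the second slot. For biadditivity in the first argument I would lift $m_1 + m_1'$ by the product $g_1 g_1'$ and apply the standard identity $[g_1 g_1', g_2] = [g_1, g_2]^{g_1'}[g_1', g_2]$, valid for the convention $[g,h]=g^{-1}h^{-1}gh$; since $[g_1,g_2]$ is central the conjugation is trivial, so $[g_1 g_1', g_2] = [g_1, g_2][g_1', g_2]$. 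Applying the isomorphism $\iota^{-1}$ on the abelian group $\iota(C)$ then yields $\omega(m_1 + m_1', m_2) = \omega(m_1, m_2) + \omega(m_1', m_2)$.

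The alternating property is the cleanest point: for any lift $g$ of $m$ one has $[g,g]=1$, so $\omega(m,m)=\iota^{-1}(1)=0$. From this together with additivity in the first slot, antisymmetry follows by expanding $0 = \omega(m_1+m_2,\,m_1+m_2)$, and additivity in the second slot then drops out of antisymmetry. A biadditive map of $\Z$-modules is automatically $\Z$-bilinear, so no separate scalar check is required. Finally, under the extra hypotheses that $M$ is finitely generated and $C$ is cyclic, the triple $(M,\omega,C)$ meets every clause of \autoref{defn:alternatingModule} with $R=\Z$, proving the last assertion. The only place demanding care---rather than genuine difficulty---is the commutator bookkeeping: one must keep the convention straight and invoke centrality of $[g_1,g_2]$ at precisely the right moments to kill both the conjugation in the identity above and the representative ambiguities. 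There is no deeper obstacle.
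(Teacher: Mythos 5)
Your overall route is the same as the paper's: observe that $M$ abelian forces $[g_1,g_2]\in\ker(\pi)=\iota(C)\subseteq\Center(G)$, use centrality to get independence of lifts and to collapse the conjugates in the commutator expansion identities, read off the alternating property from $[g,g]=1$, and note that biadditivity of a map of $\Z$-modules is the same as $\Z$-bilinearity. One link in your chain is, however, circular as stated: you establish additivity only in the \emph{first} slot, then claim antisymmetry ``by expanding $0=\omega(m_1+m_2,m_1+m_2)$'', and finally deduce second-slot additivity from antisymmetry. But that expansion into four terms already uses additivity in \emph{both} slots, which you have not yet proved at that point. The fix is one line, and there are two ways to do it: either prove second-slot additivity directly by the mirror identity $[g,h_1h_2]=[g,h_2]\,[g,h_1]^{h_2}=[g,h_1][g,h_2]$ (this is what the paper does, quoting both expansion identities up front so that $[-,-]$ is a morphism in each coordinate), or derive antisymmetry without any additivity from $[g_2,g_1]=[g_1,g_2]^{-1}$, after which your deduction of second-slot additivity from antisymmetry plus first-slot additivity goes through. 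With either repair the argument is complete and matches the paper's proof in substance.
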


\begin{proof}
	We consider $M$ and $C$ as $\Z$-modules. 
	First note that $G'\subseteq\iota(C)\subseteq\Center(G)$, so $G$ is necessarily nilpotent of class at most~$2$. 
	Then the general commutator identities 
	$[g_1g_2,h]=[g_1,h][g_1,h,g_2][g_2,h]$ and 
	$[g,h_1h_2]=[g,h_2][g,h_1][g,h_1,h_2]$
	imply that $[-,-]:G\times G\to G'$ is a group morphism in both coordinates.
	
	Next, we check that $\omega$ is well defined. Pick $g_i,g_i'\in \pi^{-1}(m_i)$ for $i\in\{1,2\}$. Then $g_i^{-1}g_i'\in \ker(\pi)=\im(\iota)$, so there is $c_i\in C$ with $\iota(c_i)=g_i^{-1}g_i'$. 
	Then 
	$[g_1',g_2']=
	[g_1\iota(c_1),g_2\iota(c_2)]=
	[g_1,g_2][g_1,\iota(c_2)][\iota(c_1),g_2][\iota(c_1),\iota(c_2)]=
	[g_1,g_2]$ 
	by above as $\iota(C)\subseteq Z(G)$. 
	Finally, $G'\subseteq \iota(C)$ implies that we can apply $\iota^{-1}$ to this element.
	
	$\Z$-bilinearity of $\omega$ follows directly from the previously mentioned fact. The alternating property follows as every group element commutes with itself.
	\end{proof}

\begin{exmp}[$\extraSPecialD{n}$, $\extraSPecialQ{n}$: Alternating maps -- continuing \autoref{exmp:DQdef}]\label{exmp:DQomega}
	Let $G\in\{\extraSPecialD{n}, \extraSPecialQ{n}\}$. 
	Recall that $G$ is generated by $\{\alpha,\beta,\gamma\}$ and note that $\Center(G)=G'$ is cyclic of order $n$ generated by $\gamma$. 
	\autoref{lem:alterntingFunctor} applied to the non-degenerate \centralByAbelian{} extension 	
	\[\begin{tikzcd}[label]
		\maxCBAfunctor(G) \ar[:] & 
		1\ar[r] & 
		\Center(G)\ar[r,inclusion] & 
		G\ar[r,epi,"\maximalCBAProj"]  & 
		G/\Center(G)\ar[r]  & 1
	\end{tikzcd}\]
	gives a non-degenerate alternating $\Z$-module $(M,\omega_G, C)$ where $M\leteq G/\Center(G)$ and $C\leteq\Center(G)$. 
	Note that 
	$M=\generate{\coset{\alpha} , \coset{\beta}}$ where $\coset{ g}\leteq \maximalCBAProj(g)$ for $g\in G$. 
	Thus $\omega_G\from M\times M \to C$ is given by $(\coset{\alpha}, \coset{\beta})\mapsto \gamma$. 
	This shows that even though $\extraSPecialD{n}$ and $\extraSPecialQ{n}$ are not isomorphic, the resulting $\omega_{\extraSPecialD{n}}$ and $\omega_{\extraSPecialQ{n}}$ are isomorphic (in the sense of \autoref{defn:alternatingModule}).
\end{exmp}

\begin{rem}\label{rem:dictionary}
	\autoref{lem:alterntingFunctor} gives the following dictionary between subgroups $H$, $H_i$ of $G$  and submodules $\pi(H)$, $\pi(H_i)$ of $M$.
	\begin{itemize}
		\item The commutator map corresponds to $\omega$: $[g,g']=\iota\circ\omega(\pi(g),\pi(g'))$.
		
		\item Commuting subgroups correspond to orthogonal submodules: $[H_1,H_2]=1$ if and only if $\pi(H_1)\perp \pi(H_2)$. 
		(In particular, $H$ is abelian if and only if $\pi(H)$ is isotropic.)
		
		\item The centraliser of a subgroup corresponds to the orthogonal submodule: we have $\pi(C_G(H))= (\pi(H))^\perp$.  
		(In particular, $\pi(\Center(G))=M^\perp$, hence the non-degeneracy of the \centralByAbelian{} extension and that of the alternating module coincide.)
	\end{itemize}
\end{rem}

The dictionary can be extended to Darboux-generators as the following generalisation of \autoref{thm:specialPgroups} and \cite[Theorem~2.1]{brady_bryce_cossey_1969} shows. 
\begin{thm}[Central product decomposition]\label{cor:decompositionCyclicDeriverGroup}
	Let $G$ be a finite nilpotent group of class at most~$2$ with cyclic commutator subgroup $G'$. 
	Then it contains pairwise commuting subgroups $A$ and $E_1,\dots,E_t$ such that 
	$G=AE_1\dots E_t$ (a central product) where 
	$A\leq \Center(G)$, 
	$E_i$ are $2$-generated and of class exactly $2$, 
	$d(G)=d(A)+2t$ and
	$G'=E_1'\supsetneq E_2'\supsetneq \dots \supsetneq E_t'\neq 1$. 
	
	For any such subgroups $E_1,\dots,E_t$, the integer $t=\frac{1}{2} d(G/\Center(G))$ and $E_i'\subseteq G'$ are invariants of $G$ given by $G/\Center(G)\isom \prod_{i=1}^t E_i'^2$.
\end{thm}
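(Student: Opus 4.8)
The plan is to push the module-theoretic data of \autoref{lem:DarbouxGenerators} back into $G$ by means of the dictionary in \autoref{rem:dictionary}. First I would observe that, since $G$ has class at most $2$, the sequence $\epsilon\from 1\to G'\xrightarrow{\iota} G\xrightarrow{\pi}G/G'\to 1$ is a \centralByAbelian{} extension (\autoref{defn:centralByAbelian}): here $G'$ is cyclic by hypothesis and $M\leteq G/G'$ is finite, hence finitely generated. Applying \autoref{lem:alterntingFunctor} produces the alternating $\Z$-module $(M,\omega,G')$ whose form $\omega$ is induced by the commutator map, and feeding it into \autoref{lem:DarbouxGenerators} (with $R=\Z$) yields a minimal generating set $\mathcal B=\{x_1,y_1,\dots,x_t,y_t\}\cup\{z_1,\dots,z_r\}$ of $M$ with $\omega(M,M)=R\omega(x_1,y_1)\geq\dots\geq R\omega(x_t,y_t)\neq 0$, with $r=d(M)-2t$, and with every remaining pair of generators orthogonal.

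Next I would choose lifts $\tilde x_i,\tilde y_i,\tilde z_j\in G$ along $\pi$ and set $E_i\leteq\generate{\tilde x_i,\tilde y_i}$ and $A\leteq\generate{\tilde z_1,\dots,\tilde z_r}$. Reading the Darboux orthogonality through \autoref{rem:dictionary} (where commuting corresponds to orthogonality) shows at once that $A,E_1,\dots,E_t$ pairwise commute. Each $z_j$ is orthogonal to all of $\mathcal B$, so $z_j\in M^\perp=\pi(\Center(G))$; as $\ker\pi=G'\leq\Center(G)$ this forces the lifts $\tilde z_j$ into $\Center(G)$, making $A\leq\Center(G)$ abelian. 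Every $E_i$ is $2$-generated with $E_i'=\generate{[\tilde x_i,\tilde y_i]}=\iota(R\omega(x_i,y_i))\neq 1$, hence of class exactly $2$, and the nesting of the $R\omega(x_i,y_i)$ gives the chain $G'=E_1'\supseteq\dots\supseteq E_t'\neq 1$ once one notes $R\omega(x_1,y_1)=\omega(M,M)=G'$, so that $G'\subseteq E_1$. Since $\mathcal B$ generates $M$ and $G'\subseteq E_1\subseteq AE_1\cdots E_t$, the lifts together generate $G$, exhibiting the internal central product $G=AE_1\cdots E_t$.

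For the numerics I would use that $G'\leq\Phi(G)$ in the nilpotent group $G$ to get $d(G)=d(G/G')=d(M)=2t+r$; and since $\{z_1,\dots,z_r\}$ is part of a \emph{minimal} generating set of $M$, the subgroup $\generate{z_1,\dots,z_r}\leq M$ still needs $r$ generators (reduce modulo $\Phi(M)$ one prime at a time), whence $r=d(\pi(A))\leq d(A)\leq r$ and $d(G)=d(A)+2t$. The invariants then drop out of the second part of \autoref{lem:DarbouxGenerators}: the short exact sequence \eqref{diag:omegaFlat} together with $M^\perp=\pi(\Center(G))$ yields $G/\Center(G)\isom M/M^\perp\isom\bigoplus_{i=1}^t(R\omega(x_i,y_i))^{\oplus 2}\isom\prod_{i=1}^t(E_i')^2$, so $t=\tfrac12 d(G/\Center(G))$; conversely, any decomposition as in the statement makes $\pi$ send the $\tilde x_i,\tilde y_i,\tilde z_j$ to a Darboux generating set of $M$, so the same invariance applies \enquote{in any such case}. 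The main obstacle is the bookkeeping of the lifts --- verifying that the module-level minimality of $\{z_j\}$ is not lost when passing back to $A\leq G$ --- which is precisely where the identities $d(G)=d(G/G')$ and the dictionary of \autoref{rem:dictionary} are indispensable.
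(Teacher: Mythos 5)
Your proposal is correct and follows essentially the same route as the paper: apply the alternating functor to $1\to G'\to G\to G/G'\to 1$, extract Darboux generators via \autoref{lem:DarbouxGenerators}, lift them to define $A$ and the $E_i$, and translate orthogonality back through the dictionary of \autoref{rem:dictionary}. The only (harmless) divergence is in the generator count, where you invoke $G'\leq\Phi(G)$ and the minimality of $\{z_j\}$ in place of the paper's sandwich $d(G)\leq d(A)+\sum_i d(E_i)\leq|\mathcal{B}|=d(M)\leq d(G)$, which forces all the equalities at once.
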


\begin{proof}
	Let $(M,\omega,C)=\alternatingFunctor(\epsilon:1\to G'\xrightarrow{\subseteq} G\xrightarrow{\pi} G/G'\to 1)$ 
	be the alternating $\Z$-module given by \autoref{lem:alterntingFunctor}. 
	Let $\mathcal{B}=\{x_1,y_1,\dots,x_t,y_t,o_1,\dots,o_k\}$ be a minimal generating set of $M=G/G'$ as in \autoref{lem:DarbouxGenerators}.
	For every $b\in \mathcal{B}$, fix an arbitrary lift $\cosetRepresentative{b}\in\pi^{-1}(b)\subseteq G$, and set $\cosetRepresentative{\mathcal{B}} \leteq \{\cosetRepresentative{g}:g\in \mathcal{B}\}$. 
	We show that the subgroups 
	$E_i\leteq \generate{\cosetRepresentative{x_i},\cosetRepresentative{ y_i}}\leq G$ and $A\leteq \generate{\cosetRepresentative{o_1},\dots,\cosetRepresentative{o_k}}\leq G$  satisfy the statement.
	Indeed, $A\subseteq \pi^{-1}(M^\perp)=\Center(G)$ using \autoref{rem:dictionary}. 
	Moreover, 
	$[E_i,E_j]=1$
	if and only if 
	$(\Z x_i+\Z y_i)\perp (\Z x_j+\Z y_j)$
	if and only if 
	$i\neq j$ 
	by \autoref{rem:dictionary} and \autoref{lem:DarbouxGenerators}. 
	By \autoref{rem:dictionary}, 
	$G'=[G,G]=\omega(M,M)$ and 
	$E_i'=[E_i,E_i]=\omega(\Z x_i+\Z y_i,\Z x_i+\Z y_i)=\Z\omega(x_i,y_i)$. 
	Then all parts about the derived subgroups follow from \autoref{lem:DarbouxGenerators}.
	In particular, $G'=E_1'=\Z\omega(x_i,y_i)=\generate{[\cosetRepresentative{ x_1}, \cosetRepresentative{y_1}]}$, 
	so considering the short exact sequence $\epsilon:1\to \generate{[\cosetRepresentative{ x_1}, \cosetRepresentative{y_1}]}\to G\to \generate{\mathcal{B}}\to 1$, we see that 
	$G
	=\generate{\{[\cosetRepresentative{x_1}, \cosetRepresentative{y_1}]\}\cup \cosetRepresentative{\mathcal{B}}}
	= \generate{\cosetRepresentative{\mathcal{B}}}
	=AE_1\dots E_t
	$.
	So
	$d(G)\leq
	d(A)+\sum_{i=1}^t d(E_i)\leq 
	|\cosetRepresentative{\mathcal{B}}|
	=|\mathcal{B}|=d(M)\leq d(G)$. 
	This forces equality everywhere, so $d(E_i)=2$ and $2t+d(A)=d(G)$. 
	Finally \autoref{rem:dictionary} shows that $M^\perp=\pi(\Center(G))=\Center(G)/G'$, 
	so $M/M^\perp = (G/G')/(\Center(G)/G')\isom G/\Center(G)$.
\end{proof}

\begin{rem}\label{rem:centralProductDecompositionNotUnique}
	The isomorphism classes of the subgroups $E_1,\dots,E_t$ are not unique. 
	For example, the external central product $D_8\cprod Q_8$ of the dihedral and quaternion group is isomorphic to $Q_8\cprod Q_8$. 
	More generally, for a different prime $p>2$ and a general positive integer $t$, we can consider the classical decomposition of extra-special $p$-groups from \autoref{thm:specialPgroups}, cf. \cite[Theorem~4.18]{Suzuki2}.
	Let $\extraSPecialD{p}$ be the non-abelian group of order $p^3$ of exponent $p$, 
	and let $\extraSPecialQ{p}$ be the non-abelian group of order $p^3$ of exponent $p^2$, cf. \autoref{exmp:DQdef}. 
	Then the isomorphism class of the central product of $s$ copies of $\extraSPecialQ{p}$ and $t-s$ copies of $\extraSPecialD{p}$ is independent of the choice of $s$ provided $1\leq s\leq t$. 
\end{rem}

\begin{proof}[Proof of {\autoref{thm:mainStructure}/\autoref{part:mainStructureCentral}}]
	This is a special case of \autoref{cor:decompositionCyclicDeriverGroup}.
\end{proof}

\subsection{Complex structure}

If an alternating module is non-degenerate, we can endow it with additional structures. 

\begin{defn}
	For a commutative ring $Q$, define a ring $Q[i] \leteq Q[x]/(x^2+1)$ with $i\leteq x+(x^2+1)\in Q[i]$ and 
	define the maps
	$\sigma\from Q[i]\to Q[i],q+iq'\mapsto q-iq'$ (\emph{conjugation}) 
	and $\Im\from Q[i]\to Q,q+iq\mapsto q'$ (the \emph{imaginary part}). 
	For a $Q[i]$-module $M$, we call a map $h\from M\times M\to Q[i]$ a \emph{Hermitian form on $M$ over $Q[i]$} if $h$ is $Q[i]$-linear in the first argument and $h$ is $\sigma$-conjugate symmetric (i.e. $h(m,m')=\sigma(h(m',m))$).
\end{defn}

The next statement is essential for \autoref{thm:mainAction}.
\begin{prop}[Isotropic complex structure]\label{lem:complexStructure}
	Let $(M,\omega,C)$ be a non-degenerate alternating $R$-module. 
	Set $Q\leteq R/\ann_R(\omega(M,M))$, and 
	let $\phi\from Q\embeds C$ be a monomorphism of $R$-modules such that $\omega(M,M)\subseteq \phi(Q)$.
	Then $M$ can be endowed with a $Q[i]$-module structure together with a Hermitian form $h$ making the following diagram commute. 
	\[\begin{tikzcd}
		M\times M \ar[r,"\omega"] \ar[d,"h","\exists"',dashed]
		& C
		\\
		Q[i] \ar[r,"\Im",epi] 
		& Q \ar[u,"\phi"',mono]
	\end{tikzcd}\]
	Furthermore, $M$ has a non-canonical isotropic $Q$-structure, i.e. an isotropic $Q$-submodule $M_Q$ of $M$ such that $M=M_Q\oplus iM_{Q}$ (as $Q$-modules). 
	Finally, there is $\alpha\in M_Q$ such that $\Im (h(\alpha,i\alpha))$ is a $Q$-module generator of $Q$. 
\end{prop}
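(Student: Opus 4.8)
The plan is to read a compatible complex structure off the Darboux generators of \autoref{lem:DarbouxGenerators} and then build $h$ as the associated Hermitian form, mirroring the vector-space passage from a symplectic form together with a compatible complex structure to a Hermitian form. First I would pin down the shape of $M$. Applying \autoref{lem:DarbouxGenerators} to $(M,\omega,C)$ gives a minimal generating set containing $\{x_1,y_1,\dots,x_t,y_t\}$ with $\omega(M,M)=R\omega(x_1,y_1)\geq\dots\geq R\omega(x_t,y_t)\neq 0$ and all other pairings of generators vanishing; since any further generator would satisfy $\omega(o,M)=0$, i.e. $o\in M^\perp=0$, contradicting minimality, there are none, so $M=\bigoplus_{i=1}^t(Rx_i\oplus Ry_i)$. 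From the isomorphism $\omega^\flat$ of \eqref{diag:omegaFlat} one reads off $\ann_R(x_i)=\ann_R(\omega(x_i,y_i))=\ann_R(y_i)\supseteq\ann_R(\omega(M,M))$, so $M$ is a $Q$-module.

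Next I would define the complex structure $J\from M\to M$ by $Jx_i\leteq y_i$ and $Jy_i\leteq -x_i$, extended $R$-linearly; it is well defined precisely because $\ann_R(x_i)=\ann_R(y_i)$, and $J^2=-\id$ by inspection, so $J$ turns $M$ into a $Q[i]$-module with $i$ acting as $J$. Setting $\bar\omega\leteq\phi^{-1}\circ\omega\from M\times M\to Q$ (well defined and non-degenerate, as $\phi$ is mono and $\omega(M,M)\subseteq\phi(Q)$), I would then put
\[ h(m,m')\leteq \bar\omega(Jm,m')+i\,\bar\omega(m,m'). \]
The verification splits into three checks, each reduced to the generators: (i) $\Im h=\bar\omega$, hence $\phi\circ\Im\circ h=\omega$ and the diagram commutes; (ii) $\sigma$-conjugate symmetry, which amounts to symmetry of the real part $\bar\omega(J\,\cdot\,,\cdot)$ together with the alternating property of $\bar\omega$; and (iii) $Q[i]$-linearity in the first slot, where the only nonformal point is $h(Jm,m')=i\,h(m,m')$, immediate from $J^2=-\id$.

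Finally I would extract the isotropic structure and the generator. Take $M_Q\leteq\sum_{i=1}^t Rx_i$: it is isotropic since $\omega(x_i,x_j)=0$ for all $i,j$, and $iM_Q=JM_Q=\sum_i Ry_i$, so $M=M_Q\oplus iM_Q$ as $Q$-modules. For the last assertion set $\alpha\leteq x_1\in M_Q$; then $\Im h(\alpha,i\alpha)=\bar\omega(x_1,Jx_1)=\bar\omega(x_1,y_1)=\phi^{-1}(\omega(x_1,y_1))$, and this generates $\phi^{-1}(\omega(M,M))=Q$ because $\omega(x_1,y_1)$ generates $\omega(M,M)$ by \autoref{lem:DarbouxGenerators}.

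The main obstacle is check (ii): showing that the real part $\bar\omega(J\,\cdot\,,\cdot)$ is symmetric, i.e. that $J$ is compatible with $\bar\omega$. This is exactly the content that upgrades $h$ from a bilinear pairing to a genuine Hermitian form, and it is where the Darboux normalisation does the work — one verifies $\bar\omega(Jb,b')=\bar\omega(Jb',b)$ on all pairs $b,b'$ of Darboux generators, the mutual orthogonality relations collapsing every case to $\bar\omega(y_i,x_i)=-\bar\omega(x_i,y_i)$. A secondary subtlety is the closing generation step: it requires $\phi^{-1}(\omega(M,M))=Q$, which follows from $\omega(x_1,y_1)$ generating $\omega(M,M)$ together with the fact that the cyclic modules $\omega(M,M)\isom Q\isom\phi(Q)$ coincide in the torsion (in particular finite) setting relevant to the application.
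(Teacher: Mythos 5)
Your proposal is correct and follows essentially the same route as the paper's proof: Darboux generators via \autoref{lem:DarbouxGenerators}, the complex structure $i$ defined by $x_j\mapsto y_j$, $y_j\mapsto -x_j$ (the paper realises it by pulling back $(n,n')\mapsto(-n',n)$ along $\omega^\flat$, which packages your annihilator check), the identical formula $h(m,m')=\omega_Q(im,m')+i\,\omega_Q(m,m')$, the same verification of linearity and conjugate symmetry reduced to the generators, and the same choices $M_Q=\sum_j Qx_j$, $\alpha=x_1$. Your closing caveat that the final generation claim uses $\phi^{-1}(\omega(M,M))=Q$ in the torsion setting is if anything slightly more careful than the paper, which disposes of that point with ``by construction''.
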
	
\begin{rem}
	The $Q[i]$-module structure of $M$ from \autoref{lem:complexStructure} is non-canonical, but is compatible with $R\surjects Q\embeds Q[i]$.
	The $Q$-module $iM_Q$ is automatically isotropic as $\omega(ia,ia')=\phi(\Im(h(ia,ia')))=\phi(\Im(h(a,a')))=\omega(a,a')=0$.
	
	If $C$ is finite, then by order considerations, the condition $\omega(M,M)\subseteq \phi(Q)$ is automatically satisfied.
\end{rem}

\begin{rem}\label{rem:complexStructuresIsomorphic}
	While the structures themselves from \autoref{lem:complexStructure} depend on the choice of the generators of $M$, their isomorphism classes do not. 
	More concretely, let $M'$ be an arbitrary $Q[i]$-module structure on the $M$ together with 
	a Hermitian form $h'$ and an isotropic $Q$-structure 
	$M' = M'_Q\oplus iM'_Q$ as at the statement. 
	Then \autoref{lem:alternatingSmith} implies the existence of a $Q[i]$-module isomorphism $f\from M\to M'$ such that 
	$f(M_Q)=M'_Q$ and 
	$h'\circ(f\times f) = h$ (hence $\omega\circ(f\times f)=\omega)$.
\end{rem}

\begin{proof}[Proof of \autoref{lem:complexStructure}]
	First, we claim that $\ann_R(\omega(M,M))\subseteq \ann_R(M)$.
	Indeed,  for arbitrary $r\in \ann_R(\omega(M,M))$ and $m\in M$, 
	$\omega(rm,m')=r\omega(m,m')=0$ for every $m'\in M$, hence 
	$rm\in M^\perp=0$. 
	This then means that $M$ can be naturally considered as a $Q$-module.
	To define the $Q[i]$-module structure, let $\mathcal{B}=\{x_1,y_1,\dots,x_t,y_t,o_1,\dots,o_k\}$ be a Darboux-generating set as in \autoref{lem:DarbouxGenerators}. 	
	The isomorphism $\omega^\flat:M\isom \bigoplus_{j=1}^t (R\omega(x_j,y_j))^{\oplus 2}$ from \eqref{diag:omegaFlat} shows that $k=0$ and 
	$M=\bigoplus_{j=1}^t (Rx_i\oplus Ry_i)=\bigoplus_{j=1}^t (Qx_i\oplus Qy_i)$. 
	In particular, $M_Q\leteq \bigoplus_{j=1}^t Qx_i$ and $M_{iQ}\leteq \bigoplus_{j=1}^t Qy_i$ are (non-canonical) isotropic submodules of $M$ giving a $Q$-module decomposition $M=A\oplus B$.
	Define a $Q$-module automorphism $\iota_j$ of $(Q\omega(x_j,y_j))^{\oplus 2}$ by $\iota_j\from(n,n')\mapsto (-n',n)$. Pulling back $\bigoplus_{j=1}^t \iota_j$ along $\omega^\flat$ gives a non-canonical automorphism $\iota$ of $M$ 
	such that $\iota(x_j)=y_j$ and $\iota(y_j)=-x_j$. 
	Thus $\iota\circ \iota=-\id_M$, moreover 
	$\iota(M_Q)=M_{iQ}$ and $\iota(M_{iQ})=M_Q$. 
	Thus defining $(q+iq')\cdot m\leteq qm+\iota(q'm)$ gives the $Q[i]$-module structure in which $M_{iQ}=iM_Q$.

	By assumption, there is a $Q$-bilinear map $\omega_Q\from M\times M\to Q$ such that $\omega = \phi\circ \omega_Q$. 
	We claim that 
	\begin{align}
		h\from M\times M\to Q[i],\quad
		(m,m')\mapsto \omega_Q(im,m')+i\omega_Q(m,m')
		\label{eq:HermitianFromAlternating}
	\end{align}
	is the Hermitian form with the stated properties. 
	Indeed, $Q$-linearity in the first argument is inherited from $\omega$, and 
	\[h(im,m')
	=\omega_Q(-m,m')+i\omega_Q(im,m')
	=i(\omega_Q(im,m')+i\omega_Q(m,m'))
	=ih(m,m')
	\] then implies $Q[i]$-linearity.
	For conjugate symmetry, first note that $\omega(ix_j,iy_j)=\omega(y_j,-x_j)=\omega(x_j,y_j)$ using the alternating property, 
	whereas for all other pairs $(b_1,b_2)\in\mathcal{B}^2$, we also have $\omega(ib_1,ib_2)=0=\omega(b_1,b_2)$.
	Hence the $Q$-bilinearity of $\omega_Q$ implies 
	$\omega(im,im')=\omega(m,m')$ for every $m,m'\in M$.
	This together with the alternating property of $\omega_Q$ gives 
	$\omega_Q(im,m')
	=-\omega_Q(m',im)
	=\omega_Q(i^2m',im)
	=\omega_Q(im',m)
	$, 
	thus 
	\[h(m,m')
	=\omega_Q(im,m')+i\omega_Q(m,m')
	=\omega_Q(im',m)-i\omega_Q(m',m)
	=\sigma(h(m',m))
	.\]
	Finally, by construction, we can take $\alpha=x_1$.
\end{proof}

\begin{rem}\label{rem:omega-mu equivalence}
	The Hermitian form and the alternating map determine each other uniquely via 
	\eqref{eq:HermitianFromAlternating} and $\Im\circ h = \omega_Q$.
	Furthermore, given the isotropic $Q$-structure, these maps are determined by the restriction $\mu\from M_Q\times iM_Q\to C, (a,b)\mapsto \omega(a,b)$. 
	Indeed, note that $\omega(a+ib,a'+ib')= \mu(a,ib')-\mu(a',ib)$ using the bilinearity of $\omega$ and the fact that $M_Q$ is isotropic.
\end{rem}

\begin{exmp}[$\extraSPecialD{n}$, $\extraSPecialQ{n}$: Hermitian forms -- continuing \autoref{exmp:DQomega}]\label{exmp:DQhermitian}
	Let $R=\Z$ and consider the ring $Q\leteq \Z/(n)$. 
	Note that $M=G/\Center(G)$ is a $Q$-module generated by $\{\coset{\alpha},\coset{\beta}\}$. 
	In fact, $M_Q\leteq Q\coset{\alpha}$ is an isotropic $Q$-submodule such that $M=Q \coset{\alpha} \oplus Q \coset{\beta}$.
	The $Q[i]$-structure on $M$ is defined by $i\cdot (a\coset{\alpha} +b\coset{\beta})\leteq -b\coset{\alpha}+a\coset{\beta}$. 
	Let $\phi\from Q\to C=\generate{\gamma}$ be an isomorphism of $\Z$-modules such that $[1]\mapsto \gamma$. 
	Then the Hermitian form $h\from M\times M\to Q[i]$ of \autoref{lem:complexStructure} is given by $(a\coset{\alpha}+b\coset{\beta}, a'\coset{\alpha}+b'\coset{\beta})\mapsto -(a+ib)(a'-ib')=-(aa'+bb')+i(ab'-a'b)$. 
	Note that the $Q[i]$-structure is induced by the isomorphism $M\to Q[i],a\coset{\alpha}+b\coset{\beta}\mapsto a+bi$, and that 
	$h$ corresponds to $-1$ times the usual Hermitian form on $Q[i]$.
\end{exmp}

\section{Heisenberg groups}\label{sec:HeisenbergGroups}

\label{Subsections?}

\begin{summary}
	In this section, we associate a (polarised) Heisenberg group to every $\Z$-bilinear map, in particular to alternating modules, or to finite nilpotent groups $G$ of class at most $2$ with cyclic centre. 
\end{summary}

\begin{defn}[Heisenberg group]\label{def:H}
	Let $A$, $B$ and $C$ be $\Z$-modules and 
	$\mu\from A\times B\to C$ a (not necessarily surjective) $\Z$-bilinear map. 
	We call $\mu$ \emph{non-degenerate}, if $\mu(a,B)=0$ implies $a=0$ and $\mu(A,b)=0$ implies $b=0$. 
	Define the associated \emph{Heisenberg group} as
	$\HH(\mu)\leteq A\ltimes_\phi(B\times C)$
	where $\phi\from A\to \Aut(B\times C),a\mapsto((b,c)\mapsto (b,\mu(a,b)+c)$. 
	Call $\HH(\mu)$ \emph{non-degenerate} if $\Center(\HH(\mu))=\{(0,0,c):c\in C\}$.
	Define a \centralByAbelian{} extension 
	\[\begin{tikzcd}[label]
		\HeisenbergFunctor(\mu) \ar[:] 
		& 1\ar[r]
		& C \ar[r,"{\HeisenbergMono[\mu]}",mono] 
		& \HH(\mu) \ar[r,"{\HeisenbergEpi[\mu]}",epi]
		& A\times B \ar[r] 
		& 1
	\end{tikzcd}\]
	by 
	$\HeisenbergMono\leteq \HeisenbergMono[\mu]:c\mapsto(0,0,c)$ and $\HeisenbergEpi\leteq \HeisenbergEpi[\mu]:(a,b,c)\mapsto (a,b)$. 
\end{defn}

\begin{rem}\label{rem:Hdef}
	More explicitly, the group structure on $\HH(\mu)$ is given by  
	\[(a,b,c)*(a',b',c')  = (a+a', b+b',c + \mu(a,b')+c')\] with 
	$(0,0,0)$ being the identity and 
	$(a,b,c)^{-1} =(-a,-b,\mu(a,b)-c)$ the inverse. 
	So formally \[\HH(\mu)\isom\matrixGroupH{A}{B}{C}{\mu}\] with matrix multiplication induced by $\mu$. 
	In particular, we have  $[(a,b,c),(a',b',c')]=(0,0,\mu(a,b')-\mu(a',b))$, i.e. the commutator coincides with $\HeisenbergMono[\mu]\circ\omega\circ(\HeisenbergEpi[\mu]\times \HeisenbergEpi[\mu])$ using the notation of \autoref{rem:omega-mu equivalence}.
	Note that $\Center(\HH(\mu)) = \{(a,b,c):\mu(a,B)=\mu(A,b)=0\}\supseteq \HeisenbergMono[\mu](C)$. 
	The notion of non-degeneracy for $\mu$, $\omega$, $\HH(\mu)$ and $\HeisenbergFunctor(\mu)$ all coincide.	
\end{rem}

\begin{exmp}[$\extraSPecialD{n}$, $\extraSPecialQ{n}$: isomorphism to Heisenberg groups -- continuing \autoref{exmp:DQdef}]\label{exmp:DHQnotH}
	We claim that $\extraSPecialD{n}$ is isomorphic to a Heisenberg group, namely to the $3\times 3$ upper triangular matrices with entries from $\Z/(n)$. 
	Indeed, let the bilinear map $\mu_n\from \Z_n\times \Z_n\to \Z_n$ be defined by multiplication: $([a],[b])\mapsto [a\cdot b]$. 
	Then 
	\begin{equation}
		\label{eq:DisoH}
		\begin{aligned}
			\extraSPecialD{n}&\to\HH(\mu_n\from \Z_n\times \Z_n\to \Z_n)\\
		\beta^b\gamma^c\alpha^a&\mapsto ([a], [b], [c])
		\end{aligned}
	\end{equation} 
	is an isomorphism. 
	
	On the other hand, $\extraSPecialQ{n}$ is not isomorphic to any Heisenberg group (demonstrating that the embedding of \autoref{thm:mainHeisenberg} cannot be replaced by an isomorphism in general).  
	Indeed, seeking a contradiction, assume that $\extraSPecialQ{n}$ is isomorphic to some (necessarily non-degenerate) Heisenberg group $H=\HH(\mu\from A\times B\to C)$. 
	Since $\Center(\extraSPecialQ{n})=\extraSPecialQ{n}'=\generate{\gamma}$, we must have $\Center(H)=H'=\{(0,0,c):c\in C\}\isom C$ by \autoref{rem:Hdef}, 
	thus $C\isom \Z_n$.  
	Then $A\times B\isom H/H'\isom \extraSPecialQ{n}/\extraSPecialQ{n}' = \generate{\alpha \extraSPecialQ{n}'}\times  \generate{\beta \extraSPecialQ{n}'}\isom \Z_n\times\Z_n$, 
	which in turn implies that $A\isom B\isom \Z_n$ 
	since the triviality of $A$ or $B$ would mean that $H\isom\extraSPecialQ{n}$ is abelian. 
	If $A=\generate{a}$ and $B=\generate{b}$, then $C=\generate{\mu(a,b)}$ by above. 
	Hence by picking isomorphisms $A\to\Z_c,a \mapsto [1]$, 
	$B\to\Z_c,b \mapsto [1]$ and 
	$C\to\Z_c,\mu(a,b) \mapsto [1]$, 
	we see that $H\isom \HH(\mu_n)\isom\extraSPecialD{n}$. 
	But this cannot happen, as $\extraSPecialD{n}\not\isom\extraSPecialQ{n}$ using  \autoref{exmp:DQdef}. 
\end{exmp}

\begin{note}[Polarised Heisenberg group of alternating modules]\label{rem:HeisenbergOfAlternetingModule}
	Let $(M,\omega,C)$ be a non-degenerate alternating $R$-module. 	%
	Apply \autoref{lem:complexStructure}, 
	set $A\leteq M_Q$ and $B\leteq iM_Q$ considered as $\Z$-modules.  
	Now $M=A\oplus B$. 
	Then the restriction $\mu\from A\times B\to C$ as in \autoref{rem:omega-mu equivalence} is non-degenerate and produces the non-degenerate \centralByAbelian{} extension $\HeisenbergFunctor(\mu)$ of \autoref{def:H}. 
	In the symplectic vector space analogy, the isotropic submodule $A\leq M$ is called a polarisation, so we call $\HH(\mu)$ the \emph{polarised Heisenberg group} associated to the polarisation $A$ if the alternating $R$-module.
	
	Note that while this construction depends on the choice of the polarisation, 
	the isomorphism class of $\HeisenbergFunctor(\mu)$ does not, because $f$ from \autoref{rem:complexStructuresIsomorphic} induces an isomorphism of short exact sequences, cf. \cite[\S4.1]{phd}. 
	In particular, the isomorphism class of $\HH(\mu)$ is invariant (under the choice of the polarisation) and we call this isomorphism class the \emph{Heisenberg group associated} to the alternating module. 

	Note that $(M,\omega,C)$, $\mu$, $\HH(\mu)$ and $\HeisenbergFunctor(\mu)$  basically decode the same information as they mutually determine each other.
\end{note}
\begin{rem}[Canonical Heisenberg group]
	Consider the setup of  \autoref{rem:HeisenbergOfAlternetingModule}. 
	We wish to define $\HH(\mu)$ in a canonical way, i.e. we want to find a group $H_\omega$ isomorphic to $\HH(\mu)$ that depends only on $\omega$ and not on the choice of the polarisation. 
	For this, pick $x,y\in R$ whose values are to be determined later. 
	Being motivated by the standard construction for symplectic vector spaces, define a group on the set $H_\omega\leteq M\times C$ where the binary operation is given by $(m,c)\cdot (m',c')\leteq  (m+m',c+x\omega(m,m')+c')$.  
	A short computation shows that the map 
	\[\phi\from \HH(\mu\from A\times B\to C)\to H_\omega,\quad
	(a,b,c)\mapsto (a+b, y\omega(a,b) + c)\]
	is a group isomorphism  if and only if $(1-x+y)\omega(M,M)= (x+y)\omega(M,M)=0\in C$. 
	Since $\omega(M,M)$ is cyclic, we may write $\omega(M,M)\isom R/(d)$, where $d\in R$ is a generator of the annihilator of the $R$-module $\omega(M,M)$.  
	Hence the required $x,y\in R$ exist making $\phi$ an isomorphism if and only if $1\equiv 2x\pmod{d}$ has a solution in $x$, i.e. when $d\in R$ and $2\in R$ are relative prime. 
		
	In particular, if $M$ is finite, then the canonical construction above works if $\omega(M,M)$ is of odd order. 
	In fact, this is a necessary condition for $\HH(\mu)\isom H_\omega$ in the finite case, since  $\HH(\mu)'=\{(0,0,c):c\in \omega(M,M)\}\isom \omega(M,M)$ by \autoref{rem:Hdef} and 
	$H_\omega'=\{(0,2xc):c\in\omega(M,M)\}\isom 2x\omega(M,M)$ by direct computation. 
	Note that $\omega(M,M)$ having odd order is equivalent to $M$ having odd order by the non-degeneracy of $\omega$.

	In this paper, we shall use the polarised construction of \autoref{rem:HeisenbergOfAlternetingModule} instead of the canonical one to treat all cases uniformly --  whether the group has $2$-torsion or not. Accordingly, by a  Heisenberg group we mean the polarised construction: \autoref{def:H} and \autoref{rem:HeisenbergOfAlternetingModule}.
\end{rem}

We can associate a Heisenberg group to any nilpotent group of class at most~$2$ with cyclic centre as follows.
\begin{note}\label{exmp:mu_G}
	Let $G$ be a finite nilpotent group of class at most~$2$ with cyclic centre. 
	Apply \autoref{lem:alterntingFunctor} to the non-degenerate \centralByAbelian{} extension 
	\[\begin{tikzcd}[label]
		\maxCBAfunctor(G) \ar[:] & 
		1\ar[r] & 
		\Center(G)\ar[r,inclusion] & 
		G\ar[r,epi,"\maximalCBAProj"]  & 
		G/\Center(G)\ar[r]  & 1
	\end{tikzcd}\]
	to obtain a non-degenerate alternating $\Z$-module $(G/\Center(G),\omega_G, \Center(G))\leteq\alternatingFunctor(\maxCBAfunctor(G))$. 
	Hence \autoref{rem:HeisenbergOfAlternetingModule} gives a non-degenerate $\Z$-bilinear map $\mu_G\from A\times B\to \Center(G)$ (where actually $A\isom B$) and a \centralByAbelian{} extension 
	\[\begin{tikzcd}[label]
		\HeisenbergFunctor(\mu_G) \ar[:]& 
		1\ar[r] & 
		\Center(G)\ar[r,mono,"{\HeisenbergMono[\mu_G]}"] & 
		\HH(\mu_G) \ar[r,epi,"{\HeisenbergEpi[\mu_G]}"]  & 
		A\times B \ar[r] & 
		1.
	\end{tikzcd}\]
	In this way, we assigned a Heisenberg group $\HH(\mu_G)$ to $G$. This Heisenberg group and the original group $G$ share many properties: the order, isomorphism class of centre and the commutator subgroup, and the nilpotency class. 
	However, unless $G\isom \HH(\mu_G)$, we cannot even expect to have a morphism between the two short exact sequences above, as that would imply $G\isom \HH(\mu_G)$ by the Short Five Lemma for groups \cite[Chapter~I. Lemma~3.1]{MacLane1995-vg}. 
\end{note}

Finally, we compute the Heisenberg group associated to our example nilpotent groups. 

\begin{exmp}[$\extraSPecialD{n}$, $\extraSPecialQ{n}$: Associated Heisenberg groups -- continuing \autoref{exmp:DQhermitian}]\label{exmp:DQmu}
	Recall $Q=\Z/(n)$. 
	Since $M_Q=Q \coset{\alpha}$ and $iM_Q=Q \coset{\beta}$, the $\Z$-bilinear map $\mu_G\from \generate{\coset{\alpha}} \times \generate{\coset{\beta}}\to\generate{\gamma}$ is given by $(\coset{\alpha},\coset{\beta})\mapsto \gamma$. 
	This means that both $\mu_\extraSPecialD{n}$ and $\mu_\extraSPecialQ{n}$ are 
	isomorphic to $\mu_n$ from \autoref{exmp:DHQnotH}, 
	hence $\HH(\mu_\extraSPecialD{n})\isom \HH(\mu_\extraSPecialQ{n})\isom\HH(\mu_n)\isom \extraSPecialD{n}$. 
	In short, the Heisenberg groups associated to $\extraSPecialD{n}$ and to $\extraSPecialQ{n}$ are both isomorphic to $\extraSPecialD{n}$. 
	This demonstrates that this construction needs to be changed to attain the embedding of \autoref{thm:mainHeisenberg}. 
\end{exmp}

\section{Embedding into Heisenberg groups}\label{sec:HeisenbergEmbedding}

\begin{summary}
	We put together the pieces from previous sections to prove the main statement of the paper, \autoref{thm:mainHeisenberg} from \autopageref{thm:mainHeisenberg}: every finite nilpotent group $G$ of class at most $2$ is a subgroup of a suitable Heisenberg group. 
	First, we handle the special case when the centre of $G$ is cyclic by modifying the construction of \autoref{sec:HeisenbergGroups}. 
	More explicitly, we replace the centre of the associated Heisenberg group with a larger cyclic group. 
	For this, we introduce the notion of extended polarisation to generalise the isotropic structure from \autoref{sec:alterntingModules}.
	Finally, we use \autoref{part:mainStructureSubdirect} of \autoref{thm:mainStructure} from \pageref{thm:mainHeisenberg} to reduce the general case of the problem to the previous special case by considering the direct product of the Heisenberg groups obtained there.
\end{summary}

\subsection{Extended polarisation and the induced embedding}

Our goal in to establish a monomorphism $\maxCBAfunctor(G)\to \HeisenbergFunctor(\extended{\mu})$ for a suitable (non-degenerate) $\extended{\mu}$. 
In fact, we will show that $\extended{\mu}=\zeta\circ\mu_G$ works for a suitable embedding $\zeta:\Center(G)\embeds \extended{C}$ where $\mu_G$ is from \autoref{exmp:mu_G}. 
For this, we generalise the notion of isotropic structure from \autoref{lem:complexStructure} used in \autoref{rem:HeisenbergOfAlternetingModule}.

\begin{defn}\label{def:polarisationOfExtensions}
	An \emph{extended polarisation} of a \centralByAbelian{} extension $\epsilon$ is a pair of the following commutative diagrams ($j\in\{1,2\}$)
	\begin{equation}\label{diag:polarisationOfExntensions}
		\begin{tikzcd}[label]
			\epsilon_j\ar[:] & 1\ar{r} &  C_j \ar[r,"\iota_j",dashed]\ar[d,"\kappa_j",dashed] & G_j \ar[r,"\pi_j",dashed]\ar[d,"\gamma_j",dashed] \ar[ddl,dashed,bend left=10,"\zeta_j" near end] & L_j\ar[r,dashed]\ar[d,mono,"\lambda_j",dashed] & 1 \\
			\epsilon\ar[:]& 1\ar{r} & C \ar[r,"\iota"] \ar[d,dashed,"\zeta"] & G \ar["\pi"]{r} & M \ar{r} & 1
			\\
			& & \extended{C} & & L_1\times L_2 \ar[u,iso,dashed,"\lambda"']
		\end{tikzcd}
	\end{equation}
	such that $\epsilon_j$ is a \centralByAbelian{} extension, 
	the map $\lambda\from L_1\times L_2\to M$ given by $(l_1,l_2)\mapsto \lambda_1(l_1)+\lambda_2(l_2)$ is an isomorphism and 
	$\extended{C}$ is an abelian group.
\end{defn}

\begin{lem}\label{lem:G=G_2CG_1}
	Every extended polarisation as in \eqref{diag:polarisationOfExntensions} induces the product decomposition $G=\gamma_2(G_2)\iota(C)\gamma_1(G_1)$.
\end{lem}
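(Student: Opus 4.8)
The plan is a short diagram chase; the two facts that carry the argument are that $\lambda=\lambda_1\oplus\lambda_2$ is an isomorphism and that $\iota(C)\subseteq \Center(G)$. First I would fix an arbitrary $g\in G$ and push it to the quotient via $\pi$. Since $\lambda$ is an isomorphism onto the abelian group $M$, I can write $\pi(g)=\lambda_1(l_1)+\lambda_2(l_2)$ for (unique) $l_1\in L_1$ and $l_2\in L_2$.

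Next, I would lift each $l_j$ back into $G_j$. As each $\epsilon_j$ is a short exact sequence, $\pi_j\from G_j\to L_j$ is surjective, so there are $g_j\in G_j$ with $\pi_j(g_j)=l_j$. The commutativity of the right-hand square of \eqref{diag:polarisationOfExntensions}, i.e. $\pi\circ\gamma_j=\lambda_j\circ\pi_j$, then yields $\pi(\gamma_j(g_j))=\lambda_j(l_j)$ for $j\in\{1,2\}$. Forming the product $\gamma_2(g_2)\gamma_1(g_1)\in G$ and applying the homomorphism $\pi$ into the abelian group $M$ gives $\pi(\gamma_2(g_2)\gamma_1(g_1))=\lambda_2(l_2)+\lambda_1(l_1)=\pi(g)$.

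Consequently $(\gamma_2(g_2)\gamma_1(g_1))^{-1}g\in\ker(\pi)=\im(\iota)=\iota(C)$, so $g=\gamma_2(g_2)\gamma_1(g_1)\iota(c)$ for some $c\in C$. Finally, since $\epsilon$ is \centralByAbelian{}, the element $\iota(c)$ lies in $\Center(G)$, hence commutes with $\gamma_1(g_1)$; moving it to the middle produces $g=\gamma_2(g_2)\iota(c)\gamma_1(g_1)\in\gamma_2(G_2)\iota(C)\gamma_1(G_1)$. This proves $G\subseteq\gamma_2(G_2)\iota(C)\gamma_1(G_1)$, and the reverse inclusion is immediate since each factor lies in $G$.

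I do not expect a genuine obstacle here: the statement is essentially a bookkeeping consequence of the surjectivity of the $\pi_j$ together with the isomorphism $\lambda$, and the auxiliary data $\kappa_j$, $\zeta$, $\zeta_j$ and the monomorphisms $\lambda_j$ play no role in this particular claim. The only point needing a moment's care is the ordering of the three factors, which is precisely where the centrality of $\iota(C)$ is used.
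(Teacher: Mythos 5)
Your proof is correct and follows essentially the same diagram chase as the paper: lift $\lambda^{-1}(\pi(g))$ through the surjections $\pi_j$ and observe that the discrepancy lies in $\ker(\pi)=\iota(C)$. The only cosmetic difference is that the paper forms $\gamma_2(g_2)^{-1}g\,\gamma_1(g_1)^{-1}$ directly, which places $\iota(c)$ in the middle without invoking centrality, whereas you obtain $\iota(c)$ on the right and then commute it into place.
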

\begin{proof}
	Pick $g\in G$ arbitrarily. 
	Set $(l_1,l_2)\leteq \lambda^{-1}(\pi(g))\in L_1\times L_2$. 
	From the surjectivity of $\pi_j$, pick $g_j\in G_j$ such that $\pi_j(g_j)=l_j$. 
	Then 
	$\pi(\gamma_2(g_2)^{-1}g\gamma_1(g_1)^{-1})=
	-\lambda_2(\pi_2(g_2)) + (\lambda_1(l_1)+\lambda_2(l_2)) -\lambda_1(\pi_1(g_1))=
	0$
	using the commutativity of the diagram \eqref{diag:polarisationOfExntensions}. 
	So  $\gamma_2(g_2)^{-1}g\gamma_1(g_1)^{-1}\in \ker(\pi)=\im(\iota)$, hence there is a $c\in C$ such that $\iota(c)=\gamma_2(g_2)^{-1}g\gamma_1(g_1)^{-1}$. Rearranging this gives the decomposition as claimed.
\end{proof}

The next statement is the key to finding embeddings into Heisenberg groups.
\begin{prop}\label{lem:key}
	Every extended polarisation as in \eqref{diag:polarisationOfExntensions} can be completed to a commutative diagram
	\begin{equation}
		\label{diag:key}
		\begin{tikzcd}[label]
			\epsilon\ar[d,dashed,"\exists"]\ar[:]
			& 1\ar{r} 
			& C \ar[r,"\iota"]\ar[d,"\zeta"] 
			& G \ar["\pi"]{r}\ar[d, dashed,"\exists \delta"] 
			& M \ar{r}\ar[d, iso',"\lambda^{-1}"] 
			& 1 
			\\
			\HeisenbergFunctor(\extended{\mu})\ar[:]
			&1\ar{r} 
			& \extended{C} \ar[r,"{\HeisenbergMono[\extended{\mu}]}"] 
			& \HH(\extended{\mu}) \ar["{\HeisenbergEpi[\extended{\mu}]}"]{r} 
			& L_1\times L_2 \ar{r} 
			& 1
		\end{tikzcd}
	\end{equation}
	where $\extended{\mu}$ is defined by  
	\[\begin{tikzcd}
		M\times M \ar[r,"\omega"]
		& C \ar[d,"\zeta"]
		\\
		L_1 \times L_2 \ar[r,dashed,"\extended{\mu}"] \ar[u,mono,"\lambda"]
		& \extended{C}
	\end{tikzcd}\]
	for the alternating $\Z$-bilinear map $\omega$ from \autoref{lem:alterntingFunctor} when applied to $\epsilon$. 
\end{prop}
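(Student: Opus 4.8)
The plan is to construct the dashed homomorphism $\delta\from G\to\HH(\hat\mu)$ by hand and then read off the commutativity of both squares. Commutativity of the right-hand square already forces the first two coordinates of $\delta(g)$ to be $\lambda^{-1}(\pi(g))\eqlet(a(g),b(g))\in L_1\times L_2$, so the only freedom is the $\hat C$-coordinate: write $\delta(g)=(a(g),b(g),s(g))$ for a function $s\from G\to\hat C$ to be found. Unwinding the group law from \autoref{rem:Hdef}, one checks that $\delta$ is a homomorphism if and only if $s$ satisfies the cocycle identity
\begin{equation*}
	s(gh)=s(g)+s(h)+\hat\mu(a(g),b(h)),
\end{equation*}
and the left-hand square then reduces to the normalisation $s(\iota(c))=\zeta(c)$.

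To produce such an $s$, I would use the factorisation $g=\gamma_2(g_2)\iota(c)\gamma_1(g_1)$ supplied by \autoref{lem:G=G_2CG_1} and set $s(g)\leteq\zeta_2(g_2)+\zeta(c)+\zeta_1(g_1)$; equivalently $\delta(g)$ is the product $(0,\pi_2(g_2),\zeta_2(g_2))*(0,0,\zeta(c))*(\pi_1(g_1),0,\zeta_1(g_1))$ in $\HH(\hat\mu)$. Each of these three factors is the image of a genuine homomorphism $G_2\to\HH(\hat\mu)$, $C\to\HH(\hat\mu)$, $G_1\to\HH(\hat\mu)$: this is a one-line verification using that $\pi_j,\zeta_j,\zeta$ are homomorphisms and that $\hat\mu(\cdot,0)=\hat\mu(0,\cdot)=0$.

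The main obstacle is that this factorisation is not unique, so I must check that $s$ is well-defined. The outer components are rigid: applying $\pi$ and using that $\lambda=\lambda_1\oplus\lambda_2$ is an isomorphism forces $\pi_1(g_1)=a(g)$ and $\pi_2(g_2)=b(g)$. Hence any two factorisations differ by replacing $g_1\mapsto g_1\iota_1(c_1)$ and $g_2\mapsto g_2\iota_2(c_2)$ with $c_j\in C_j$, which (using that $\iota(C)$ is central, that $\iota$ is injective, and the relations $\gamma_j\circ\iota_j=\iota\circ\kappa_j$) forces $c$ to be corrected by $-\kappa_1(c_1)-\kappa_2(c_2)$. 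The compatibility $\zeta_j\circ\iota_j=\zeta\circ\kappa_j$ then makes the contributions $\zeta(\kappa_j(c_j))$ coming from $\zeta_j$ cancel exactly the corrections coming from $\zeta$, so the value of $s$ is unchanged. Thus $s$ is well-defined, and applying it to the trivial factorisation of $\iota(c)$ gives $s(\iota(c))=\zeta(c)$.

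Finally, for the cocycle identity I would take factorisations of $g$ and $h$, form $gh$, and renormalise it into standard shape. The only reordering required is to move $\gamma_2(h_2)$ to the left past $\gamma_1(g_1)$; since $G$ is of class $\leq 2$ (because $G'\subseteq\iota(C)\subseteq\Center(G)$) the resulting error is the central commutator, which by \autoref{lem:alterntingFunctor} equals $[\gamma_1(g_1),\gamma_2(h_2)]=\iota(\omega(\lambda_1 a(g),\lambda_2 b(h)))$. Collecting all central $\iota$-terms yields the factorisation $gh=\gamma_2(g_2h_2)\,\iota\big(c+d+\omega(\lambda_1 a(g),\lambda_2 b(h))\big)\,\gamma_1(g_1h_1)$. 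Evaluating the well-defined $s$ on this, using additivity of $\zeta_1,\zeta_2,\zeta$ and the defining relation $\hat\mu(a(g),b(h))=\zeta(\omega(\lambda_1 a(g),\lambda_2 b(h)))$, produces precisely $s(g)+s(h)+\hat\mu(a(g),b(h))$. This verifies that $\delta$ is a homomorphism; both squares of \eqref{diag:key} commute by construction, which completes the diagram.
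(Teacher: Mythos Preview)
Your proposal is correct and follows essentially the same route as the paper: both define $\delta(g)=(\pi_1(g_1),\pi_2(g_2),\zeta_2(g_2)+\zeta(c)+\zeta_1(g_1))$ via the factorisation of \autoref{lem:G=G_2CG_1}, verify well-definedness by tracking how two factorisations differ through $\iota_j(C_j)$ and using $\zeta_j\circ\iota_j=\zeta\circ\kappa_j$, and establish the homomorphism property by commuting $\gamma_2(h_2)$ past $\gamma_1(g_1)$ to pick up the central commutator $\iota(\omega(\lambda_1 a(g),\lambda_2 b(h)))$, which becomes $\hat\mu(a(g),b(h))$ after applying $\zeta$. Your phrasing via the cocycle identity for $s$ is a clean repackaging of the paper's direct computation of $\delta_3(gg')$.
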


\begin{rem}\label{rem:deltaMono}
	The Short Five Lemma for groups \cite[Chapter~I. Lemma~3.1]{MacLane1995-vg} shows that $\delta$ is injective if and only if $\zeta$ is.
	In this case, $\HH(\extended{\mu})$ is the external central product of $\extended{C}$ and $G$ amalgamating $C$ along $\iota$ and $\zeta$; 
	in particular, $G$ is isomorphic to a normal subgroup of a Heisenberg group.
\end{rem}

\begin{rem}\label{rem:keyNonDegeneracy}
	The \centralByAbelian{} extension $\epsilon$ is non-degenerate if and only if $\HH(\extended{\mu})$ is non-degenerate.
\end{rem}

\begin{proof}[Proof of \autoref{lem:key}]
	First note that $\extended{\mu}$ is indeed an alternating $\Z$-bilinear map by \autoref{lem:alterntingFunctor}, so $\HH(\extended{\mu})$ is well defined.
	We show that $\delta\leteq (\delta_1,\delta_2,\delta_3)$ satisfies the statement for $j\in\{1,2\}$ and 
	\[\delta_j\from G\to L_j,g\mapsto \pi_j(g_j),\qquad 
	\delta_3\from G\to \extended{C},g\mapsto \zeta_2(g_2)\zeta(c)\zeta_1(g_1),\]
	for any decomposition $g=\gamma_2(g_2)\iota(c)\gamma_1(g_1)$ from \autoref{lem:G=G_2CG_1}.
	The map $\delta_j$ is actually the natural composition of the group morphisms $G\xrightarrow{\pi} M\xrightarrow{\lambda^{-1}} L_1\times L_2\to L_j$. In particular, $\delta_j$ is independent of the choice of the decomposition. 
	To show that $\delta_3$ is independent of the choice of the decomposition, let $\gamma_2(g_2)\iota(c)\gamma_1(g_1)=g=\gamma_2(g_2')\iota(c')\gamma_1(g_1')$.
	Then on one hand, $\pi_j(g_j)=\delta_j(g)=\pi_j(g_j')$ by above, 
	hence by the exactness of $\epsilon_j$, there are $c_j\in C_j$ such that  
	$\iota_1(c_1)=g_1'g_1^{-1}$ and $\iota_2(c_2)=g_2^{-1}g_2'$. 
	On the other hand, rearranging the original equation using $\iota(C)\subseteq \Center(G)$ gives 
	$\iota(cc'^{-1})=\gamma_2(g_2^{-1}g_2')\gamma_1(g_1'g_1^{-1})=
	\iota(\kappa_2(c_2)\kappa_1(c_1))$, 
	hence $cc'^{-1}=\kappa_2(c_2)\kappa_1(c_1)$ as $\iota$ is injective. 
	Putting these together gives 
	\begin{align*}
		\zeta_2(g_2)\zeta(c)\zeta_1(g_1)
		&=\zeta_2(g_2'\iota_2(c_2)^{-1}) \cdot
		\zeta(\kappa_2(c_2)c'\kappa_1(c_1)) \cdot 
		\zeta_1(\iota_1(c_1)^{-1}g_1')
		\\&=\zeta_2(g_2') \cdot 
		( \zeta_2(\iota_2(c_2))^{-1} \zeta(\kappa_2(c_2)) )\cdot 
		\zeta(c') \cdot 
		( \zeta(\kappa_1(c_1)) \zeta_1(\iota_1(c_1))^{-1} ) \cdot 
		\zeta_1(g_1')
		\\&=\zeta_2(g_2')\zeta(c')\zeta_1(g_1')
	\end{align*}
	using commutativity of \eqref{diag:polarisationOfExntensions}. Thus $\delta_3$ is indeed well defined.

	Note that, unlike the other maps, $\delta_3$ is just a map of sets, \emph{not} a group morphism.
	Its failure to be a group morphism is measured by $\extended{\mu}$.  
	Indeed, pick decompositions
	$g=\gamma_2(g_2)\iota(c)\gamma_1(g_1)$ and $g'=\gamma_2(g_2')\iota(c')\gamma_1(g_1')$. 
	Set $x\leteq \omega(\pi(\gamma_1(g_1)),\pi(\gamma_2(g_2')))\in C$ and note that 
	$\iota(x)=[\gamma_1(g_1),\gamma_2(g_2')]$. 
	Use this to find a decomposition of the product as 
	\begin{align*}
		gg'&=
		\gamma_2(g_2)\iota(c)\gamma_1(g_1)\gamma_2(g_2')\iota(c')\gamma_1(g_1')\\&=
		\gamma_2(g_2)\gamma_2(g_2')\iota(cc')[\gamma_1(g_1),\gamma_2(g_2')]\gamma_1(g_1)\gamma_1(g_1')\\&=
		\gamma_2(g_2g_2')\iota(cc'x)\gamma_1(g_1g_1').
		\intertext{Then by definitions and using the commutativity of the diagram,}  
		\delta_3(gg')
		&=\zeta_2(g_2g_2')\zeta(cc'x)\zeta_1(g_1g_1')
		\\&=\zeta_2(g_2)\zeta(c)\zeta_1(g_1)\cdot \zeta_2(g_2')\zeta(c')\zeta_1(g_1')\cdot \zeta(\omega(\pi(\gamma_1(g_1)),\pi(\gamma_2(g_2'))))
		\\&=
		\delta_3(g)\delta_3(g')\extended{\mu}(\pi_1(g_1),\pi_2(g_2'))\\&=
		\delta_3(g)\delta_3(g')\extended{\mu}(\delta_1(g),\delta_2(g')).
		\intertext{This property together with \autoref{rem:Hdef} imply that $\delta$ is indeed a group morphism:} 
		\delta(gg')&=
		(\delta_1(gg'),\delta_2(gg'),\delta_3(gg'))\\&=
		(\delta_1(g)\delta_1(g'),\delta_2(g)\delta_2(g'),\delta_3(g)\delta_3(g')\extended{\mu}(\delta_1(g),\delta_2(g')))\\&=
		(\delta_1(g),\delta_2(g),\delta_3(g))*(\delta_1(g'),\delta_2(g'),\delta_3(g'))\\&=
		\delta(g)*\delta(g').
	\end{align*}
	
	Finally, we check that diagram \eqref{diag:key} is commutative. 
	Indeed, if $c\in C$, then using the decomposition $\iota(c)=\gamma_2(1)\iota(c)\gamma_1(1)$ gives 
	$\delta\circ\iota = (c\mapsto (0,0,\zeta(c))) = \HeisenbergMono[\extended{\mu}]\circ\zeta$ by definitions. 
	Similarly, the decomposition $g=\gamma_2(g_2)\iota(c)\gamma_1(g_1)\in G$ gives the equality 
	$\lambda^{-1}\circ \pi=(g\mapsto 
	\lambda^{-1}(\pi(\gamma_2(g_2)\gamma_1(g_1)))
	=\delta_1(g)+\delta_2(g))=
	\HeisenbergEpi[\extended{\mu}] \circ\delta$.
\end{proof}

\begin{exmp}[$\extraSPecialD{n}$, $\extraSPecialQ{n}$: Heisenberg embeddings --  continuing \autoref{exmp:DQmu}]\label{exmp:DQembeddingToHbad}
	Let $g\in\{\alpha,\beta\}$. 
	We will replace $C$ from \autoref{exmp:DQomega} by  the iterated external central product $C_G^{\cprod} \leteq (\generate{\gamma} \cprod \generate{\alpha})\cprod \generate{\beta}$ where the amalgamations are along the natural inclusions of $\generate{\gamma}\cap \generate{g}$ into $\generate{\gamma}$ and into $\generate{g}$.
	To do so, note that the solid arrows of the following commutative diagram induce an extended polarisation of $\maxCBAfunctor(G)$ where $\zeta_G^{\cprod}$ and $\zeta_g^{\cprod}$ are the natural inclusions into the external central product $C_G^{\cprod}$.
	\begin{equation}\label{diag:DQextendedPolarisation}
		\begin{tikzcd}[label]
			\epsilon_g\ar[:] & 1\ar{r} &  \generate{\gamma}\cap \generate{g} \ar[r,inclusion]\ar[d,inclusion] & \generate{g} \ar[r,"\maximalCBAProj"]\ar[d,inclusion] \ar[ddl,bend left=10,mono, "\zeta_g^{\cprod}"{pos=0.6}]& \generate{\coset{g}}\ar[r]\ar[d,inclusion] & 1 
			\\
			\maxCBAfunctor(G)\ar[:]& 1\ar{r} & \generate{\gamma} \ar[r,inclusion] \ar[d,mono,"\zeta_G^{\cprod}"] & G \ar[d,dotted,mono,"\delta_G^{\cprod}"] \ar["\maximalCBAProj"]{r} & G/\Center(G)\ar[d,identity] \ar{r} & 1
			\\
			\HeisenbergFunctor(\mu_G^{\cprod})\ar[:]& 1\ar[r,dashed] &  C_G^{\cprod} \ar[r,dashed,"{\HeisenbergMono[\mu_G^{\cprod}]}"]& \HH(\mu_G^{\cprod}) \ar[r,dashed,"{\HeisenbergEpi[\mu_G^{\cprod}]}"] & \generate{\coset{\alpha}}\times \generate{\coset{\beta}} \ar[r,dashed] & 1
		\end{tikzcd}
	\end{equation}
	Then \autoref{lem:key} gives an embedding 
	\begin{equation}
		\label{eq:extraSpecialToH}
		\begin{aligned}
			\delta_G^{\cprod}\from G&\embeds \HH(\mu_G^{\cprod}\from \generate{\coset{\alpha}}\times\generate{\coset{\beta}}\to C_G^{\cprod})\\
			\beta^b\gamma^c\alpha^a&\mapsto (\coset{\alpha}^a, \coset{\beta}^b, [(\gamma^c,\alpha^a,\beta^b)])
		\end{aligned}
	\end{equation}
	where the $\Z$-bilinear map $\mu_G^{\cprod}$ is defined by $\zeta_G^{\cprod}\circ \mu_G$, more concretely by $\mu_G^{\cprod}\from (\coset{\alpha}^a,\coset{\beta}^b)\mapsto [(1,\gamma^{ab},1)]$. 
	Up to this point, in all examples both $G\in\{\extraSPecialD{n}, \extraSPecialQ{n}\}$ behaved in the exact same way. 
	However, 
	$\mu_\extraSPecialD{n}^{\cprod}\not\isom  \mu_\extraSPecialQ{n}^{\cprod}$ since  
	$C_\extraSPecialD{n}^{\cprod}\not\isom C_\extraSPecialQ{n}^{\cprod}$, 
	even if \autoref{exmp:DQmu} shows that $\mu_\extraSPecialD{n}\isom \mu_\extraSPecialQ{n}$.
	We investigate the difference between $G=\extraSPecialD{n}$ and $G=\extraSPecialQ{n}$ further.	 
	\begin{enumerate}
		\item For $G=\extraSPecialD{n}$, 
		we have $C_\extraSPecialD{n}^{\cprod} \isom 
		\generate{\gamma} \times \generate{\alpha}\times \generate{\beta} \isom 
		\Z_n^3$
		as both amalgamations are along the trivial $\generate{\gamma}\cap \generate{\alpha}=\generate{\gamma}\cap \generate{\beta}=1$. 
		Thus \eqref{eq:extraSpecialToH} translates to  
		\begin{equation}\label{eq:DtoHold}
			\begin{aligned}
				\delta_\extraSPecialD{n}^{\cprod}\from G&\embeds \HH(\mu_\extraSPecialD{n}^{\cprod} \from \Z_n\times \Z_n\to \Z_n\times \Z_n^2)\\
				\beta^b\gamma^c\alpha^a&\mapsto ([a], [b], ([c], ([a],[b])))
			\end{aligned}
		\end{equation}
		where $\mu_\extraSPecialD{n}^{\cprod}$ is given by $([a],[b])\mapsto ([ab],([0],[0]))$.

		\item 
		For $G=\extraSPecialQ{n}$, 
		note that the map $C_\extraSPecialQ{n}^{\cprod}\to 
		\generate{\alpha}\times \generate{\alpha^{n-1}\beta}$ taking $([\gamma^c,\alpha^a,\beta^b])$ to $(\alpha^{a+nc-(n-1)b}, (\alpha^{n-1}\beta)^b)$ is an isomorphism, because this time  
		we amalgamate along $\generate{\gamma}\cap \generate{\alpha}=\generate{\gamma}\cap\generate{\beta}=\generate{\gamma}$ in $C_\extraSPecialQ{n}^{\cprod}$. 
		This shows that $C_\extraSPecialQ{n}^{\cprod}\isom \Z_n^2 \times \Z_n$, 
		hence \eqref{eq:extraSpecialToH} produces the embedding
		\begin{equation}\label{eq:QtoHold}
			\begin{aligned}
				\delta_\extraSPecialQ{n}^{\cprod}\from G&\embeds \HH(\mu_\extraSPecialQ{n}^{\cprod} \from \Z_n\times \Z_n\to \Z_{n^2}\times \Z_n)\\
				\beta^b\gamma^c\alpha^a&\mapsto ([a], [b], ([a+b+nc-nb], [b]))
			\end{aligned}
		\end{equation}
		where $\mu_\extraSPecialQ{n}^{\cprod}$ is defined by $([a],[b])\mapsto ([nab], [0])$.
	\end{enumerate}
	We will show in \autoref{exmp:DQembeddingToHoptimal} that (by replacing the notion of the central product suitably) the second direct factor of $C_G^{\cprod}$ can be omitted yielding cyclic centre and keeping the embedding as needed for \autoref{thm:mainHeisenberg}.
\end{exmp}

\subsection{Improved embedding and the proof of \autoref{thm:mainHeisenberg}}

To fix the issue from the end of \autoref{exmp:DQembeddingToHbad}, we need the following elementary statement.
\begin{lem}\label{lem:Zextension}
	Let $A$ be a finite abelian group, 
	let $C$ and $Z$ be finite cyclic groups 
	together with an injection $\iota\from C\embeds A$ and a morphism $\kappa\from C\to Z$. 
	Then there exist a cyclic group $P$ of order $\lcm(\exp(A), |Z|)$ and morphisms to $P$ making the following diagram commutative.
	\[\begin{tikzcd}
		C \ar[r,mono,"\iota"] \ar[d,"\kappa"] & A \ar[d,dashed,"\exists\phi"] \\
		Z \ar[r,mono,dashed,"\exists\theta"] & P
	\end{tikzcd}\]
\end{lem}
\begin{rem}\label{rem:fprod}
	Note that if $A$ is cyclic and $\kappa$ is injective, then the resulting $P$ is the external central product $Z\cprod A$ amalgamating the largest possible subgroups containing the images of $C$. 
	We introduce the notation $P\leteq Z\fprod A$ and think of it as a kind of product of $Z$ and $A$ producing a cyclic group in which the images of $C$ are identified. 
	Note that $P$ is given only up to isomorphism and the maps $\phi$ and $\theta$ are not canonical in any way.
\end{rem}

\begin{proof}[Proof of \autoref{lem:Zextension}]
	Without loss of generality, we may assume that $C=\Z_m$, $Z=\Z_n$ and look for $P$ of the form $\Z_l$ where $l=\lcm( \exp(A),|Z|)$.
	
	First, we prove the case $A=\Z_k$. 
	The map $\kappa$ is defined by some $b\in \Z$ such that $\kappa(1+m\Z)=\frac{n}{m}b+n\Z$. 
	Similarly, $\iota$ is given by $\iota(1+m\Z)=\frac{k}{m}a+k\Z$ for some $a\in\Z$. 
	Since $\iota$ is injective, we have $\gcd(a,m)=1$, hence 
	we may pick $x\in \Z$ so that $ax\equiv b \pmod{m}$. 
	Define $\phi\from (i+k\Z)\mapsto i\frac{l}{k}x+l\Z$
	and an injection $\theta\from (j+n\Z)\mapsto j\frac{l}{n}+l\Z$. 
	A short computation shows that $\phi\circ\iota = \theta\circ\kappa$ with these definitions as required in this case.

	In the general case, write $A=\prod_{K\in \mathcal{S}} K$ for some suitable set $\mathcal{S}$ of cyclic subgroups of $A$ of prime power order. 
	For $K\in \mathcal{S}$, let $\pi_K\from A\to K$ be the natural projection, and write $o(K)\leteq |\im(\pi_K\circ\iota)|$. 
	For every prime divisor $p$ of $m$, pick a $K_p\in \mathcal{S}$ so that $o(K_p)=\max\{o(K):K\in \mathcal{S},p\divides |K|\}$. 
	Define the composition 
	\[\bar\iota\from C\xrightarrow{\iota} A\xsurjects{\pi} \prod_{p\mid m} K_p\isom \Z_d\xembeds{\upsilon}  \Z_k\] where 
	the $\pi$ is the product map $\prod_{p\mid m}\pi_{K_p}$,  
	the isomorphism is given by the Chinese remainder theorem, 
	and $\upsilon$ is any embedding where $d\divides k\leteq \exp(A)$. 
	The morphism $\bar\iota$ is injective, because 
	$|\im(\bar\iota)|  
	= \lcm\{|K_p|:p\mid m\}
	= \lcm\{o(K):K\in \mathcal{S}\} 
	=m
	$. 
	So replacing $\iota$ by $\bar\iota$ reduces to the special case $A=\Z_k$ discussed above.
\end{proof}

\begin{exmp}[$\extraSPecialD{n}$, $\extraSPecialQ{n}$: Improved Heisenberg embeddings --  continuing \autoref{exmp:DQembeddingToHbad}]\label{exmp:DQembeddingToHoptimal}
	Replace the external central product $C_G^{\cprod}= (\generate{\gamma} \cprod \generate{\alpha}) \cprod \generate{\beta}$
	from \autoref{exmp:DQembeddingToHbad} 
	by $C_G^{\fprod}\leteq (\generate{\gamma} \fprod \generate{\alpha})\fprod \generate{\beta}$ from \autoref{rem:fprod}.
	More concretely, apply \autoref{lem:Zextension} successively to the diagrams
	\begin{equation}\label{diag:DQzetaConstruction}
		\begin{tikzcd}
			\generate{\gamma}\cap\generate{\alpha} \ar[r,inclusion,"\iota_\alpha"'] \ar[d,inclusion,"\kappa_\alpha"'] & \generate{\alpha} \ar[d,dashed,"\exists\phi_\alpha"] &
			\generate{\gamma}\cap\generate{\beta} \ar[r,inclusion,"\iota_\beta"',dashed] \ar[d,inclusion,"\kappa_\beta"',dashed] & \generate{\beta} \ar[d,dotted,"\exists\phi_\beta"]
			\\
			\generate{\gamma} \ar[r,mono,dashed,"\exists\theta_\alpha"] & \generate{\gamma}\fprod\generate{\alpha} \ar[r,equal]&
			\generate{\gamma}\fprod\generate{\alpha} \ar[r,mono,dotted,"\exists\theta_\beta"] & C_G^{\fprod}
		\end{tikzcd}
	\end{equation}
	where $\kappa_\beta$ is the restriction of $\theta_\alpha$ to ${\generate{\gamma}\cap \generate{\beta}}$. 
	\begin{enumerate}
		\item Consider the case $G=\extraSPecialD{n}$. 
		We can take $\generate{\gamma}\fprod\generate{\alpha}=\generate{\gamma}$ together with  $\phi_\alpha\from \alpha\mapsto \gamma$ and $\theta_\alpha\from \gamma\mapsto \gamma$. (In short, we identify $\gamma\sim\alpha$ in $\generate{\gamma}\fprod\generate{\alpha}$.)
		Similarly, we can take $ C_G^{\fprod}=\generate{\gamma}\fprod\generate{\beta} = \generate{\gamma}$ by identifying $\gamma\sim\beta$. 
		Then $\mu_\extraSPecialD{n}^{\fprod}\from \generate{\coset{\alpha}}\times \generate{\coset{\beta}}\to \generate{\gamma}$ is given by $(\coset{\alpha},\coset{\beta})\mapsto \gamma$, 
		and the resulting Heisenberg embedding $\delta_\extraSPecialD{n}^{\fprod}\from \extraSPecialD{n}\embeds \HH(\mu_\extraSPecialD{n}^{\fprod})$ recovers \eqref{eq:DisoH}, because $\mu_\extraSPecialD{n}^{\fprod}$ and $\mu_n$ are isomorphic. 
		In this way, \eqref{eq:DisoH} can be considered as an improved version of \eqref{eq:DtoHold}.

		\item For $G=\extraSPecialQ{n}$, 
		we can take $\generate{\gamma}\fprod \generate{\alpha}=\generate{\alpha}$ by identifying $\gamma\sim \alpha^n$, 
		and then 
		$C_G^{\fprod}=\generate{\alpha}\fprod\generate{\beta} = \generate{\alpha}$ by $\alpha\sim\beta$. 
		Now $\mu_\extraSPecialQ{n}^{\fprod}\from \generate{\coset{\alpha}}\times \generate{\coset{\beta}}\to \generate{\alpha}$ is given by 
		$(\coset{\alpha},\coset{\beta})\mapsto \alpha^n$. 
		More concretely, this $\mu_\extraSPecialQ{n}^{\fprod}$ is isomorphic to $\mu_n^{\fprod}\from \Z_n\times\Z_n\to \Z_{n^2}$ defined by $\mu_n^{\fprod}\from ([a], [b])\mapsto [nab]$.  
		Hence the resulting Heisenberg embedding takes the form 
		\begin{equation}
			\label{eq:QembedsToH}
			\begin{aligned}
				\delta_\extraSPecialQ{n}^{\fprod}\from \extraSPecialQ{n}&\embeds \HH(\mu_n^{\fprod}\from \Z_n\times\Z_n\to \Z_{n^2})\\
				\beta^b\gamma^c\alpha^a&\mapsto ([a], [b], [a+b+nc])
			\end{aligned}
		\end{equation} 
		improving \eqref{eq:QtoHold}.
	\end{enumerate}
	Note that unlike $C_G^{\cprod}$, our new $C_G^{\fprod}$ is indeed \emph{cyclic} which fixes the issue from the end of \autoref{exmp:DQembeddingToHbad}.
\end{exmp}

The next statement generalises the construction of \autoref{exmp:DQembeddingToHoptimal}: using \autoref{lem:Zextension}, we replace \eqref{diag:DQzetaConstruction} by \eqref{diag:hatCconstruction} to produce a suitable extended polarisation in the cyclic centre case.

\begin{prop} \label{thm:maximalCbACyclicCentreEmbedsToHeisenberg}
	For every finite nilpotent group $G$ of class at most~$2$ with cyclic centre, there exists a monomorphism 
	\begin{equation}\label{diag:embeddingToH}
		\begin{tikzcd}[label]
			\maxCBAfunctor(G)\ar[:] \ar[d,"\exists f",dashed]
			& 1\ar[r] 
			& \Center(G) \ar[r,inclusion]\ar[d,mono,"\exists \zeta",dashed] 
			& G \ar[r,epi,"\maximalCBAProj"]\ar[d, mono,"\exists \delta",dashed] 
			& G/\Center(G) \ar{r}\ar[d, iso',"\exists \nu",dashed] 
			& 1 
			\\
			\HeisenbergFunctor(\extended{\mu})\ar[:]&1\ar[r] & \extended{C} \ar[r,"{\HeisenbergMono[\extended{\mu}]}"] & \HH(\extended{\mu}) \ar[r,"{\HeisenbergEpi[\extended{\mu}]}"] & A\times A \ar[r] & 1
		\end{tikzcd}
	\end{equation}
	of non-degenerate \centralByAbelian{} extensions for a suitable $\Z$-bilinear map $\extended{\mu}\from A\times A\to  \extended{C}$ where 
	$\exp(A)= |G'|$ and 
	$\extended{C}$ is cyclic of order satisfying $|\extended{C}| \divides\exp(G)$.	
\end{prop}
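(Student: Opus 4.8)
The plan is to exhibit the desired embedding as the output of the Key Proposition~\autoref{lem:key} applied to a suitably chosen extended polarisation of $\epsilon\leteq\maxCBAfunctor(G)$; the only genuine work is to arrange that the target group $\hat C$ be \emph{cyclic} and that $\zeta$ be injective, everything else being the formal machinery already assembled in \autoref{sec:alterntingModules} and \autoref{sec:HeisenbergGroups}.

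First I would feed $\epsilon$ into the constructions of \autoref{sec:alterntingModules}. By \autoref{lem:alterntingFunctor} this gives the non-degenerate alternating $\Z$-module $(M,\omega,C)=(G/\Center(G),\omega,\Center(G))$, and \autoref{lem:complexStructure} endows it with an isotropic decomposition $M=A\oplus B$, $B=iA$, over $Q=\Z/\ann_\Z(\omega(M,M))=\Z/|G'|\Z$. I set $L_1\leteq A$ and $L_2\leteq B$ with $\lambda_j$ the inclusions, so that $\lambda=\lambda_1\oplus\lambda_2$ is the identity isomorphism $A\oplus B\xrightarrow{\sim}M$. For the middle data I take the preimages $G_1\leteq\maximalCBAProj^{-1}(A)$ and $G_2\leteq\maximalCBAProj^{-1}(B)$, which are \emph{abelian} by the dictionary \autoref{rem:dictionary} since $A,B$ are isotropic; with $C_1=C_2=\Center(G)$, $\kappa_j=\id$, $\gamma_j$ the inclusions and $\pi_j=\maximalCBAProj|_{G_j}$, each row $\epsilon_j\colon 1\to\Center(G)\to G_j\to L_j\to 1$ is a central-by-abelian extension. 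A short computation gives $G_1\cap G_2=\Center(G)$ and $G_1G_2=G$, consistent with \autoref{lem:G=G_2CG_1}.

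The crux is producing a single cyclic $\hat C$ onto which both $G_1$ and $G_2$ map compatibly over $\Center(G)$. For this I would form the finite abelian amalgam $K\leteq G_1\oplus_{\Center(G)}G_2$ with its diagonal embedding $\Center(G)\embeds K$, and apply \autoref{lem:Zextension} to this embedding with $\kappa=\id$ on the cyclic group $\Center(G)$. This returns a cyclic group $\hat C=\Z/l\Z$ with $l=\lcm(|\Center(G)|,\exp(K))$, an injection $\zeta\colon\Center(G)\embeds\hat C$, and a homomorphism $K\to\hat C$ whose restrictions $\zeta_j\colon G_j\to\hat C$ satisfy $\zeta_j\circ\iota_j=\zeta=\zeta\circ\kappa_j$. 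These maps, together with the data of the previous paragraph, assemble into an extended polarisation as in \eqref{diag:polarisationOfExntensions}.

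Finally I would invoke \autoref{lem:key} to complete the diagram, yielding $\delta\colon G\to\HH(\hat\mu)$ with $\hat\mu(a,b)=\zeta(\omega(a,b))=\zeta\circ\mu_G$. The $4$-lemma noted after \autoref{lem:key} makes $\delta$ injective because $\zeta$ is, and \autoref{rem:keyNonDegeneracy} transfers non-degeneracy from $\epsilon$ to $\HH(\hat\mu)$; identifying $B=iA$ with a second copy of $A$ (as in \autoref{exmp:mu_G}, where $A\isom B$) rewrites the base as $A\times A$ and supplies $\nu$. The divisibility chain then follows by bookkeeping: $\exp(A)=|G'|$ since $A=M_Q$ over $Q=\Z/|G'|\Z$ with $\omega(x_1,y_1)$ generating $G'$; $|G'|\divides|\Center(G)|$ as $G'\leq\Center(G)$ is cyclic; $|\Center(G)|\divides|\hat C|=l$ by construction; and $l\divides\exp(G)$ because $\exp(K)\divides\lcm(\exp(G_1),\exp(G_2))\divides\exp(G)$ while $|\Center(G)|=\exp(\Center(G))\divides\exp(G)$. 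I expect the main obstacle to be precisely this cyclic, compatible choice of $\hat C$ and $\zeta$ over both isotropic halves at once — which is exactly what the amalgam together with \autoref{lem:Zextension} is engineered to resolve — along with the attendant verification of the exponent and order divisibilities.
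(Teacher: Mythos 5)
Your proposal is correct and follows essentially the same route as the paper: isotropic decomposition from \autoref{lem:complexStructure}, an extended polarisation built via \autoref{lem:Zextension}, then \autoref{lem:key} plus the same divisibility bookkeeping. The only (harmless) difference is that you obtain $\hat C$, $\zeta$, $\zeta_1$, $\zeta_2$ from a single application of \autoref{lem:Zextension} to the amalgam $G_1\oplus_{\Center(G)}G_2$, whereas the paper applies that lemma twice in succession, first to $\maximalCBAProj^{-1}(L_1)$ over $\Center(G)$ and then to $\maximalCBAProj^{-1}(L_2)$ over the resulting cyclic group $C$; both yield the same $\hat C$ of order $\lcm(|\Center(G)|,\exp(\maximalCBAProj^{-1}(L_1)),\exp(\maximalCBAProj^{-1}(L_2)))$.
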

\begin{rem}
	Actually, the Heisenberg group from the statement can be replaced by a canonical one as follows. 
	Let $\Hom(A,\extended{C})$ denote the set of $\Z$-linear maps from $A$ to $\extended{C}$. 
	Define a $\Z$-bilinear map $\nu\from \Hom(A,\extended{C})\times A\to \extended{C}$ by $(\alpha, a)\mapsto \alpha(a)$, and write $\HH(A,\extended{C})\leteq \HH(\nu)$ for the corresponding Heisenberg group. 
	Then the map $\HH(\extended{\mu})\to \HH(A,\extended{C})$ given by $(a,a',c)\mapsto (x\mapsto \extended{\mu}(a,x), a', c)$ is an isomorphism because $\extended{\mu}$ is non-degenerate and $\exp(A)\divides |C|$. 
	In particular, every $G$ as above is isomorphic to a normal subgroup of $\HH(A, \extended{C})$ of index at most $\exp(G)/|\Center(G)|$. 
\end{rem}

\begin{proof}[Proof of \autoref{thm:maximalCbACyclicCentreEmbedsToHeisenberg}]
	By \autoref{rem:dictionary}, we see that \autoref{lem:complexStructure} is applicable to $(G/\Center(G),\omega,\Center(G))\leteq \alternatingFunctor(\maxCBAfunctor(G))$ giving  an isotropic structure $G/\Center(G)=L_1\oplus L_2$. 
	Note that $L_2=iL_1\isom L_1$.  
	Since $L_j$ is isotropic, $\maximalCBAProj^{-1}(L_j)$ is abelian by \autoref{rem:dictionary} for $j\in\{1,2\}$. 
	
	Consider the inclusion maps (indicated by solid arrows) of the following diagram.
	\begin{equation}\label{diag:hatCconstruction}
		\begin{tikzcd}
			Z(G)\cap \maximalCBAProj^{-1}(L_1) \ar[r,inclusion,"\iota_1"'] \ar[d,inclusion,"\kappa_1"'] 
			& \maximalCBAProj^{-1}(L_1) \ar[d,dashed,"\phi_1"' near start] \ar[drr,dotted,"\zeta_1"' near start, bend right=0]
			& \Center(G)\cap \maximalCBAProj^{-1}(L_2) \ar[dll,mono,"\iota"' near start, "\supseteq"'{sloped, near start},bend left=0] \ar[d,dashed,mono,"\kappa_2" near start] \ar[r,inclusion,"\iota_2"']
			& \maximalCBAProj^{-1}(L_2) \ar[d,dotted,"\phi_2=\zeta_2"']
			\\
			\Center(G) \ar[r,mono,dashed,"\theta_1" near end] \ar[rrr,mono,bend right=15,dotted,"\zeta"] 
			& P \ar[r,equal] 
			& P \ar[r,dotted,mono,"\theta_2" near start] 
			& \extended{C} 
		\end{tikzcd}
	\end{equation}

	Applying \autoref{lem:Zextension} to the inclusions $\iota_1$ and $\kappa_1$ gives a cyclic group $P=\Center(G)\fprod \maximalCBAProj^{-1}(L_1)$ together with a morphism $\phi_1$ and an injection $\theta_1$. 
	Then we can apply \autoref{lem:Zextension} once again to the inclusion $\iota_2$ and the composition $\kappa_2=\theta_1\circ \iota$ yielding yet another cyclic group $\extended{C}=P\fprod \maximalCBAProj^{-1}(L_2)$ with a morphism $\phi_2$ and an injection $\theta_2$.
	By construction, $\theta_1$ and $\theta_2$ are both injective, hence so is 
	$\zeta\leteq \theta_2\circ \theta_1$. 
	Set $\zeta_1\leteq \theta_2\circ \phi_1$ and $\zeta_2\leteq \phi_2$. 
	Note that these definitions make the whole diagram \eqref{diag:hatCconstruction} above commutative by construction. 
		
	The maps above (for $j\in\{1,2\}$) give the extended polarisation 
	\begin{equation}\label{diag:extendedPolarisationConstruction}
		\begin{tikzcd}[label]
			\epsilon_j\ar[:] & 1\ar{r} &  \Center(G)\cap \maximalCBAProj^{-1}(L_j) \ar[r,inclusion]\ar[d,inclusion] & \maximalCBAProj^{-1}(L_j) \ar[r,"\pi_j",dashed]\ar[d,inclusion] \ar[ddl,bend left=10,"\zeta_j" near end]& L_j\ar[r]\ar[d,mono,inclusion] & 1 
			\\
			\maxCBAfunctor(G)\ar[:]& 1\ar{r} & \Center(G) \ar[r,inclusion] \ar[d,"\zeta",mono] & G \ar["\maximalCBAProj"]{r} & G/\Center(G)\ar[d,identity] \ar{r} & 1
			\\
			& & \extended{C} & & L_1\times L_2
		\end{tikzcd}
	\end{equation}
	of $\maxCBAfunctor(G)$ where $\pi_j\leteq \maximalCBAProj|_{\maximalCBAProj^{-1}(L_j)}$. 
	Then \autoref{lem:key} gives the diagram \eqref{diag:embeddingToH} upon setting $A\leteq L_1\isom L_2$.
	
	Finally, we check that the properties from the statement hold. 
	Note $\delta$ is indeed injective by \autoref{rem:deltaMono} because $\zeta$ is injective by construction. 
	The fact that $\exp(A)=\exp(L_1\times L_2)=\exp(G/\Center(G))=|(G')|$ follows from \autoref{cor:decompositionCyclicDeriverGroup}. 
	Since $G$ is of nilpotency class at most $2$, we have $G'\subseteq \Center(G)$, so $|G'|\divides |\Center(G)|$. 
	By \autoref{lem:Zextension}, 
	$|\extended{C}|$ is the least common multiple of 
	$|\Center(G)|$, $\exp(\maximalCBAProj^{-1}(L_1))$ and $\exp(\maximalCBAProj^{-1}(L_2))$, hence  
	$|\extended{C}| \divides \exp(G)$.
\end{proof}

We are ready to prove the general case, which in fact is a generalisation of \autoref{thm:mainHeisenberg}.

\begin{thm}\label{thm:embedToH}
	For every finite nilpotent group $G$ of class at most~$2$, there exist  
	non-degenerate $\Z$-bilinear maps 
	$\mu\from A\times B\to C$ 
	and $\mu_i\from A_i\times A_i\to C_i$ for $1\leq i\leq d(\Center(G))$, 
	and monomorphisms 
	\begin{equation}\label{eq:embetToH}
	\begin{tikzcd}[label]
		\maxCBAfunctor(G)\ar[:] \ar[d,"\exists f",mono,dashed]& 
		1\ar[r] & 
		\Center(G) \ar[r,central,inclusion]\ar[d,mono,"\exists \zeta",dashed] & 
		G \ar[r,epi,"\maximalCBAProj"]\ar[d, mono,"\exists \delta",dashed] & 
		G/\Center(G) \ar{r}\ar[d,mono,"\exists \nu",dashed] & 
		1 
		\\
		\HeisenbergFunctor(\mu)\ar[:]  \ar[d,dashed,mono,"\exists"] & 
		1\ar[r] & 
		C \ar[r,inclusion] \ar[d,dashed,inclusion,"\exists"']&
		\HH(\mu) \ar[r,"{\HeisenbergEpi[\mu]}"]  \ar[d,dashed,inclusion,"\exists"'] & 
		A\times B \ar[r]  \ar[d,dashed,mono,"\exists"] & 
		1
		\\
		\prod_i\HeisenbergFunctor(\mu_i)\ar[:] &
		1\ar[r] & 
		\prod_i C_i \ar[r,inclusion] & 
		\prod_i \HH(\mu_i) \ar[r,"{\prod_i \HeisenbergEpi[\mu_i]}"] & 
		\prod_i A_i\times A_i \ar[r]   & 
		1
	\end{tikzcd}
	\end{equation}
	of \centralByAbelian{} extensions such that any prime divisor of the order of any group above also divides $|G|$, and 
	\begin{align}
		d(G/\Center(G))\geq \max\{ d(A), d(B), d(A_i^2)\},\qquad
		d(\Center(G))=d(C)\geq d(C_i)=1,
		\label{eq:Droperties}
		\\
		\exp(G/\Center(G))\divides \exp(A\times B) \divides \exp(G') \divides \exp(\Center(G)) \divides \exp(C)\divides \exp(G).
		\label{eq:ExpProperties}
	\end{align}
\end{thm}

\begin{rem}
	The monomorphism $\zeta$ shows that $d(C)$ is as small as possible. 
	On the other hand, $\nu$ gives $d(G/\Center(G))\leq d(A\times B)\leq 2 d(G/\Center(G))$. 
	The lower bound is attained in the case $d(C)\leq 1$, see \autoref{thm:maximalCbACyclicCentreEmbedsToHeisenberg}. 
	It is a natural question to ask for the smallest possible value of $d(A\times B)$ in general. To give a better upper bound than above, one may need to develop some version of \autoref{lem:alternatingSmith} for matrices with entries from $\Z^n$. 
\end{rem}

\begin{proof}[Proof of \autoref{thm:embedToH}]	
	Using \autoref{prop:subdirectProductFinite}, write $\phi\from G\embeds \prod_{i=1}^n G_i$ as a subdirect product where $n\leteq d(\Center(G))$ and  each $\Center(G_i)$ is cyclic. 
	Then $G_i$ is nilpotent of class at most~$2$ as this class of groups is closed under taking quotients, so 
	\autoref{thm:maximalCbACyclicCentreEmbedsToHeisenberg} gives $f_i=(\zeta_i,\delta_i,\nu_i)\from \maxCBAfunctor(G_i)\to \HeisenbergFunctor(\mu_i)$ for some $\mu_i\from A_i\times B_i\to C_i$ (where $B_i=A_i$). 
	Let $\hat A\leteq \prod_{i=1}^n A_i$, $\hat B\leteq \prod_{i=1}^n B_i$, $\hat C\leteq \prod_{i=1}^n C_i$ 
	and set $\hat \mu\leteq \prod_{i=1}^n \mu_i\from \hat A\times \hat B\to \hat C$, which is a non-degenerate $\Z$-bilinear map by construction.
	We obtain the following diagram of \centralByAbelian{} extensions where $\prod_i$ is a shorthand for $\prod_{i=1}^n$.
	\[\begin{tikzcd}[label]
		\maxCBAfunctor(G)\ar[:]  \ar[d,mono,"\text{\autoref{prop:subdirectProductFinite}}"'] & 
		1\ar[r] & 
		\Center(G) \ar[r,inclusion]\ar[d,mono,"\phi|_{\Center(G)}"] & 
		G \ar[r,epi,"\maximalCBAProj"]\ar[d, mono,"\phi"] & 
		G/\Center(G) \ar{r}\ar[d, mono, "{[\phi]}"] & 
		1 
		\\
		\prod_i\maxCBAfunctor(G_i)\ar[:] \ar[d,mono,"\prod f_i","\text{\autoref{thm:maximalCbACyclicCentreEmbedsToHeisenberg}}"'] & 
		1\ar[r] & 
		\prod_i\Center(G_i) \ar[r,inclusion]\ar[d,mono,"\prod \zeta_i"] & 
		\prod_i G_i \ar[r,epi,"\prod \maximalCBAProj"]\ar[d, mono,"\prod \delta_i"] & 
		\prod_i G_i/\Center(G_i) \ar{r}\ar[d, iso',"\prod \nu_i"] & 
		1 
		\\
		\prod_i\HeisenbergFunctor(\mu_i)\ar[:] \ar[d,iso] &
		1\ar[r] & 
		\prod_i C_i \ar[r,inclusion]  \ar[d,identity]  & 
		\prod_i \HH(\mu_i) \ar[r,"{\prod_i \HeisenbergEpi[\mu_i]}"]\ar[d,iso] & 
		\prod_i A_i\times B_i \ar[r]  \ar[d,iso]  & 
		1
		\\
		\HeisenbergFunctor(\hat\mu)\ar[:] &
		1\ar[r]& 
		\hat C \ar[r,inclusion]  & 
		\HH(\hat\mu) \ar[r,"{\HeisenbergEpi[\hat\mu]}"] & 
		\hat A\times \hat B \ar[r]   & 
		1
	\end{tikzcd}\]
	Denote the resulting monomorphism $\maxCBAfunctor(G)\to \HeisenbergFunctor(\hat \mu)$ by $\hat f=(\hat \zeta,\hat \delta,\hat \nu)$. 
	The groups $A$ and $B$ may have more generators than stated, so we take a suitable subobject of $\HeisenbergFunctor(\hat\mu)$.
	Let $A\leq \hat A$ be the image of $G/\Center(G)\xrightarrow{\hat \nu} \hat A\times \hat B\to \hat A$, 
	and $B\leq\hat B$ be that of $G/\Center(G)\xrightarrow{\hat \nu} \hat A\times \hat B\to \hat B$. 
	Then $d(A)$ and $d(B)$ are at most $d(G/\Center(G))$. 
	Let $C\leteq \generate{\hat\zeta(\Center(G)), \hat\mu(A,B)}\leq \hat C$. 
	Then $d(\Center(G)) = d(\hat\zeta(\Center(G))) \leq d(C) \leq d(\hat C)=\sum_{i=1}^n d(C_i)\leq n=d(\Center(G))$, hence comparing the two ends gives $d(C)=d(\Center(G))$. 
	
	Define $\mu\from A\times B\to C, (a,b)\mapsto \hat\mu(a,b)$, a $\Z$-bilinear map. 
	The image of $\hat f$ lies in $\HeisenbergFunctor(\mu)$ by definition, so restricting the domain to $\HeisenbergFunctor(\mu)$ gives a map $f=(\zeta,\delta,\nu)\from \maxCBAfunctor(G)\to \HeisenbergFunctor(\mu)$, i.e. $\hat f = (\HeisenbergFunctor(\mu)\embeds\HeisenbergFunctor(\hat\mu)) \circ f$ for the natural inclusion map. We show that this $f$ satisfies the statement.

	We check that $\mu$ is non-degenerate. 
	Pick $0\neq a\in A$ and write $a=(a_1,\dots,a_n)\in \prod_{i=1}^n A_i$. 
	Without loss of generality, we may assume that $a_1\neq 0$. 
	Then by the non-degeneracy of $\mu_1$, there is a $b_1'\in B_1$ such that $0\neq \mu_1(a_1,b_1')\in C_1$. 
	By the diagram above, there is $g_1'\in G_1$ such that 
	$\nu_1(g_1'\Center(G_1))=(0,b_1')$. 
	As $\phi$ is a subdirect product, there is $g'\in G$ such that the first factor of $[\phi](g'\Center(G))$ is $g_1'\Center(G_1)$. 
	Write $b'=(b_1',\dots,b_n')$ for the image of $g'\Center(G)$ under $G/\Center(G)\xrightarrow{\hat \nu} \hat A\times \hat B\to \hat B$. 
	By construction, $b_1'$ coincides with the choice above.
	By definition, $b\in B$, and 
	$\mu(a,b) = (\hat\mu_1(a_1,b_1'),\dots,\hat\mu_n(a_n,b_n'))\neq 0$ as the first factor is non-trivial by construction.
	This argument remains valid when the roles of $A$ and $B$ are swapped, hence $\mu$ is non-degenerate.

	Consider the statement on the exponents.
	Since  $\nu$ is a monomorphism of abelian groups, we have $\exp(G/\Center(G))\divides \exp(A\times B)$.
	Next, for every $1\leq i\leq d(\Center(G))$, we have  
	$\exp(A_i\times B_i) \divides \exp(G_i') \divides \exp(G')$ 
	using \autoref{thm:maximalCbACyclicCentreEmbedsToHeisenberg} and the fact that as $G_i$ is a quotient of $G$. 
	Thus  $\exp(A\times B)\divides \exp(\hat A\times \hat B)=\lcm\{\exp(A_i\times B_i):1\leq i\leq n\}\divides \exp(G')$. 
	Since $G$ is nilpotent of class at most~$2$, we have $G'\subseteq \Center(G)$, so $\exp(G')\divides \exp(\Center(G))$. 
	The embedding $\zeta\from \Center(G)\embeds C$ shows that $\exp(\Center(G))\divides \exp(C)$. 
	Once again using \autoref{thm:maximalCbACyclicCentreEmbedsToHeisenberg}, 
	we see that $\exp(C_i)\divides \exp(G_i)\divides \exp(G)$ as $G_i$ is a quotient of $G$.
	Then $\exp(C)\divides \exp(\hat C)=\lcm\{\exp(C_i):1\leq i\leq n\}\divides \exp(G)$ as stated.
	
	Finally, by construction, $A_i$ and $C_i$ have been obtained from $G$ by taking quotients and subgroups, so no new prime divisor has been introduced.
\end{proof}

\begin{proof}[Proof of {\autoref{thm:mainHeisenberg}}]
	The embedding $G\embeds\HH(\mu\from A\times B\to C)$ with the properties from the statement is given by \autoref{thm:embedToH} as a vertical map of \eqref{eq:embetToH}. 
\end{proof}

\printbibliography

\bigskip
\noindent
\textsc{Eötvös Loránd University, Faculty of Science}, Pázmány Péter sétány 1/A,
H-1117, Budapest, Hungary

\smallskip

\noindent
\textsc{Alfréd Rényi Institute of Mathematics}, Reáltanoda u. 13–15,
H–1053, Budapest, Hungary 

\smallskip

\noindent
E-mail address: \email{szabo.r.david@gmail.com}
\end{document}